\documentclass[english,10pt]{article}

\usepackage[english]{babel}
\usepackage[utf8]{inputenc}
\usepackage[T1]{fontenc}
\usepackage[formats]{listings}
\usepackage{geometry}
\usepackage{enumitem}
\usepackage{authblk}

\usepackage{csquotes}

\geometry{verbose,tmargin=1in,bmargin=1in,lmargin=1in,rmargin=1in}


\usepackage{graphicx}
\usepackage[colorinlistoftodos]{todonotes}
\usepackage{setspace}
\usepackage{placeins}
\usepackage{enumitem}
\usepackage{titlesec}
\usepackage{comment}
\usepackage{caption}
\usepackage{subcaption}
\usepackage{enumitem}

\usepackage[colorlinks=true, allcolors=blue]{hyperref}


\usepackage{amsmath}
\usepackage{amsthm}
\usepackage{amssymb}
\usepackage{amsfonts}
\usepackage{bbm}
\usepackage{bm}
\usepackage{nicefrac}
\usepackage{mathtools}


\usepackage{tikz}

\usepackage{graphicx}
\usepackage{fullpage}
\usepackage{float}
\usepackage{wrapfig}
\usepackage{caption}

\usepackage{algpseudocode}
\usepackage{algorithm}
\usepackage{listings}

\usepackage{amsthm}
\usepackage{dsfont}
\usepackage{array}
\usepackage{mathrsfs}
\usepackage{comment}
\usepackage{mathrsfs}
\usepackage{physics}
\usepackage{relsize}
\usepackage{enumitem}
\usepackage{xcolor}

\makeatother

\usepackage{babel}
\usepackage{color}
\usepackage{centernot}

\usepackage{babel}
\usepackage{sansmath}

\setlength{\topmargin}{-0.6 in}
\setlength{\textheight}{8.5 in}
\setlength{\headsep}{0.75 in}

\usepackage[
backend=bibtex,
style=alphabetic,
citestyle=alphabetic,
maxbibnames=10
]{biblatex}
\addbibresource{ref.bib}

\usepackage{hyperref}
\hypersetup{
    colorlinks=true,
    linkcolor=black,
    filecolor=blue,
    citecolor = blue,      
    urlcolor=cyan,
}

\usepackage[nameinlink]{cleveref}


\newcommand{\E}{\mathbb{E}}
\newcommand{\PP}{\mathbb{P}}
\newcommand{\R}{\mathbb{R}}
\newcommand{\N}{\mathbb{N}}
\newcommand{\I}[1]{\mathbb{I}\qty(#1)}

\newcommand{\normop}[1]{\norm{#1}_{\mathrm{op}}}

\newtheorem{Theorem}{Theorem}[section]
\newtheorem{Proposition}{Proposition}[section]
\newtheorem{Lemma}[Proposition]{Lemma}
\newtheorem{Corollary}[Proposition]{Corollary}

\theoremstyle{definition}
\newtheorem{Assumption}[Proposition]{Assumption}

\newtheorem{Remark}[Proposition]{Remark}

\Crefname{Theorem}{Theorem}{Theorem}
\Crefname{Proposition}{Proposition}{Proposition}
\Crefname{Lemma}{Lemma}{Lemma}
\Crefname{Corollary}{Corollary}{Corollary}
\Crefname{Assumption}{Assumption}{Assumption}
\Crefname{Remark}{Remark}{Remark}
\Crefname{Notation}{Notation}{Notation}
\Crefname{Definition}{Definition}{Definition}

\def\O{\mathbb{O}}
\def\SO{\mathbb{SO}}
\def\cG{\mathcal{G}}
\def\1{\mathbf{1}}
\DeclareMathOperator{\Haar}{Haar}
\DeclareMathOperator{\diag}{diag}

\title{TAP equations for orthogonally invariant spin glasses at high temperature}

    
\author[1]{Zhou Fan\thanks{zhou.fan@yale.edu}}
\author[2]{Yufan Li\thanks{yufan\_li@g.harvard.edu}}
\author[2]{Subhabrata Sen\thanks{subhabratasen@fas.harvard.edu}}
\affil[1]{Department of Statistics and Data Science, Yale University}
\affil[2]{Department of Statistics, Harvard University}

\date{}

\theoremstyle{plain}

\theoremstyle{definition}

\theoremstyle{definition}

\makeatletter
\renewenvironment{proof}[1][\proofname] {
	\par\pushQED{\qed}\normalfont
	\topsep6\p@\@plus6\p@\relax
	\trivlist\item[\hskip\labelsep\bfseries#1\@addpunct{:}]
 	\ignorespaces
} {
	\popQED\endtrivlist\@endpefalse
}
\makeatother

\begin{document}

\maketitle

\begin{abstract}
We study the high-temperature regime of a mean-field spin glass model whose 
couplings matrix is orthogonally invariant in law. The magnetization of this
model is conjectured to satisfy a system of TAP equations, originally derived by Parisi
and Potters \cite{parisi1995mean} using a diagrammatic expansion of the Gibbs free energy. We prove
that this TAP description is correct in
an $L^2$ sense, in a regime of sufficiently high temperature. Our
approach develops a novel geometric argument for proving the convergence of an
Approximate Message Passing (AMP) algorithm to the magnetization vector, which
is applicable in models without i.i.d.\ couplings. This convergence is shown
via a conditional second moment analysis of the
free energy restricted to a thin band around the output of the AMP algorithm, in
a system of many ``orthogonal'' replicas.
\end{abstract}

\section{Introduction} 
Spin glasses are canonical models for disordered systems in statistical physics.
The Sherrington-Kirkpatrick (SK) and related mixed $p$-spin models are
well-known examples, in which random and independent interactions between
spins give rise to mean-field phenomena. In this work, we study a more general
family of mean-field 2-spin models described by the spin glass hamiltonian 
\begin{equation}\label{eq:hamiltonian}
H(\sigma)=\frac{\beta}{2} \sigma^{\top} J \sigma+h^{\top} \sigma 
\quad \text{ for } \quad \sigma \in \{\pm 1\}^n,
\end{equation}
where the couplings matrix $J$ is \emph{orthogonally invariant} in law,
but can have dependent entries. This includes the SK model as a special case,
as well as the
Random Orthogonal Model (ROM) \cite{marinari1994replica} and Gaussian Hopfield
Model \cite{hopfield1982neural}. Related models with orthogonally invariant
matrices have received substantial attention recently in high-dimensional
statistical inference; see e.g.\
\cite{takeda2006analysis,ma2017orthogonal,takeuchi2017rigorous,barbier2018mutual,rangan2019vector,gerbelot2020asymptotic}
and the references therein.

The focus of our work is a system of
Thouless-Anderson-Palmer (TAP) mean-field equations that were predicted by
\cite{parisi1995mean,opper2001adaptive} to characterize the mean or
magnetization of the Gibbs measure associated to $H(\sigma)$ at high temperature (i.e.\ small $\beta>0$). These TAP equations take the form
\begin{equation}\label{TAP}
m=\tanh \Big(h+\beta J m-\beta R\left( \beta(1-q_{*})\right) m\Big),
\end{equation}
where $R(\cdot)$ is the R-transform of the limit spectral distribution for $J$,
and $q_* \in [0,1)$ is a scalar asymptotic overlap---see Section \ref{sec:result}
below for definitions. For the SK
model, where the spectral distribution is the semicircle law and $R(z)=z$,
these equations recover the classical mean-field equations of \cite{thouless1977solution}.

TAP equations of the form (\ref{TAP}) were first non-rigorously derived for the 
ROM by Parisi and Potters in \cite{parisi1995mean}, via analysis of a
diagrammatic expansion of a magnetization-dependent Gibbs free energy and a
resummation of the terms of this expansion. Opper and Winther
\cite{opper2001adaptive,opper2016theory} extended these equations to the entire
orthogonally invariant family and re-derived them using the
cavity method, assuming Gaussian-distributed cavity fields and applying an
approximate ``linear response'' argument.
Our main result, stated as Theorem \ref{mainthm_informal} below, rigorously
verifies that the magnetization indeed satisfies the TAP equations
(\ref{TAP}) in an asymptotic $L^2$ sense, for sufficiently high temperature.

\subsection{Model and main result}\label{sec:result}

We denote the Gibbs measure associated to the hamiltonian $H(\sigma)$ in
(\ref{eq:hamiltonian}) as
\[P(\sigma)=\frac{1}{Z} \exp(H(\sigma)) \quad \text{for } \sigma \in
\Sigma_n:=\{\pm 1\}^n\]
where $Z=\sum_{\sigma \in \Sigma_n} \exp(H(\sigma))$
is the partition function. For any function $f:\Sigma_n \to \R$, we denote its
average under this Gibbs measure by
$\langle f \rangle=\sum_{\sigma \in \Sigma_n} f(\sigma) \cdot P(\sigma)$.
In particular, $\langle \sigma \rangle=(\langle \sigma_i \rangle)_{i=1}^n \in
(-1,1)^n$ is the mean or magnetization of $P(\sigma)$.

We make the following assumptions on $J$ and $h$ in the hamiltonian
(\ref{eq:hamiltonian}), which are equivalent to the setting of
\cite[Assumption 1.1]{fan2021replica}.
\begin{Assumption}\label{as}
Let $J \in \R^{n \times n}$ be symmetric, and let $J=O^{\top} D O$ be its
eigen-decomposition.

\begin{enumerate}[label=(\alph*)]
    \item \label{OisHaar} $O \sim \operatorname{Haar}(\mathbb{SO}(n))$ is a
random orthogonal matrix, Haar-distributed on the special orthogonal group.
    \item\label{Dsupport} $D=\operatorname{diag}\left(d_{1}, \ldots, d_{n}\right)$ is a deterministic diagonal matrix of eigenvalues, whose empirical distribution converges weakly to a limit law
\begin{equation*}
    \frac{1}{n} \sum_{i=1}^{n} \delta_{d_{i}} \rightarrow \mu_{D}
\end{equation*}
as $n \rightarrow \infty .$ This law $\mu_{D}$ has strictly positive variance and a compact support $\operatorname{supp}\left(\mu_{D}\right)$. Furthermore,
\begin{equation*}
    \begin{aligned}
    &\lim _{n \rightarrow \infty} \max \left(d_{1}, \ldots, d_{n}\right)=d_{+} := \max \left(x: x \in \operatorname{supp}\left(\mu_{D}\right)\right),\\
   & \liminf _{n \rightarrow \infty} \min \left(d_{1}, \ldots, d_{n}\right)>-\infty.
    \end{aligned}
\end{equation*}

\item\label{hsupport} $h=\left(h_{1}, \ldots, h_{n}\right) \in \mathbb{R}^{n}$ is a deterministic vector, whose empirical distribution of entries converges weakly to a limit law
\begin{equation*}
\frac{1}{n} \sum_{i=1}^{n} \delta_{h_{i}} \rightarrow \mu_{H}
\end{equation*}
as $n \rightarrow \infty$. For every $p \geq 1$, the law $\mu_{H}$ has finite $p^{\text {th }}$ moment, and $n^{-1} \sum_{i=1}^{n} h_{i}^{p} \rightarrow \mathbb{E}_{\mathsf{H} \sim \mu_{H}}\left[\mathsf{H}^{p}\right]$.
\end{enumerate}
\end{Assumption}

Denote the Cauchy-transform and R-transform of $\mu_D$ by 
\begin{equation*}
G(z)=\int \frac{1}{z-x} \mu_{D}(d x), \qquad R(z)=G^{-1}(z)-\frac{1}{z}
\end{equation*}
where $G(z)$ is a decreasing function of a real argument
$z \in\left(d_{+}, \infty\right)$, $G^{-1}$ is its functional inverse
over this domain, and $G^{-1}(z)$ and
$R(z)$ are defined for $z \in\left(0, G(d_{+})\right)$.
Define $q_*\in[0,1)$ and $\sigma_*^2>0$ as the solution to the fixed point equation
\begin{equation}\label{fixedpoint}
q_{*}=\mathbb{E}\left[\tanh \left(\mathsf{H}+\sigma_{*}
\mathsf{G}\right)^{2}\right], \qquad \sigma_{*}^{2}=\beta^{2} q_{*} R^{\prime}\left(\beta\left(1-q_{*}\right)\right)
\end{equation}
where $\E[\cdot]$ is the expectation over independent variables $\mathsf{G} \sim
\mathcal{N}(0,1)$ and $\mathsf{H} \sim \mu_{H}$. Existence and uniqueness for
this fixed point system is proved in \cite[Proposition~1.2]{fan2021replica} for
all $\beta<\beta_0$ and some $\beta_{0}=\beta_{0}\left(\mu_{D}\right)>0$ depending only on $\mu_D$.

Our main result shows that the magnetization $\langle \sigma \rangle$
satisfies the TAP equations (\ref{TAP}) in the limit $n \to \infty$,
for sufficiently high temperature, in the following $L^2$ sense.

\begin{Theorem}\label{mainthm_informal}
Assume that \Cref{as} holds. Then for some
$\beta_{0}=\beta_{0}\left(\mu_{D}\right)>0$ depending only on $\mu_D$
and for any $\beta \in$ $\left(0, \beta_{0}\right)$, almost surely
\begin{equation}
\lim_{n \rightarrow \infty} \frac{1}{n}\Big\|\langle\sigma\rangle-\tanh \Big(h+
\beta J\langle\sigma\rangle- \beta
R\left(\beta(1-q_{*})\right)\langle\sigma\rangle\Big)\Big\|_{2}^{2}=0. \label{eq:tap_informal} 
\end{equation}
\end{Theorem}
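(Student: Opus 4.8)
The plan is to construct an Approximate Message Passing (AMP) iteration whose fixed point is the TAP solution, show that the AMP iterates converge in the appropriate $L^2$ sense to a limit $m_\infty$ satisfying \eqref{TAP}, and then argue that the Gibbs magnetization $\langle \sigma\rangle$ must coincide with $m_\infty$ in $L^2$. Concretely, I would set up an AMP algorithm adapted to the orthogonally invariant ensemble — of the ``OAMP''/``VAMP'' type — whose state evolution recursion is governed by the R-transform $R(\cdot)$ and whose deterministic scalar parameters, in the high-temperature regime, converge to the fixed point $(q_*,\sigma_*^2)$ of \eqref{fixedpoint}. Using the state evolution machinery for orthogonally invariant matrices (rigorous AMP theorems for Haar-distributed $O$), the empirical distribution of the iterate $m^{(t)}$ converges to $\tanh(\mathsf H + \sigma_* \mathsf G)$ and the relevant Onsager-corrected nonlinearity reproduces the memory term $\beta R(\beta(1-q_*))m$. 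One then checks that as $t\to\infty$ the iterates form a Cauchy sequence (in the $n\to\infty$-then-$t\to\infty$ sense), so that $\frac1n\|m^{(t+1)}-m^{(t)}\|_2^2 \to 0$, and any limit point solves the TAP equation \eqref{TAP} exactly.

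The genuinely new ingredient, and the step I expect to be the main obstacle, is showing that the AMP output actually equals the Gibbs magnetization $\langle\sigma\rangle$ — i.e.\ that AMP has converged \emph{to the correct fixed point} and not merely to \emph{a} fixed point of \eqref{TAP}. In models with i.i.d.\ couplings this is typically done via a direct interpolation or cavity argument; here the paper signals (in the abstract) that the route is a \textbf{conditional second moment method} on the free energy restricted to a thin band $\{\sigma : \langle \sigma, m^{(t)}\rangle \approx n\,q_*\}$ around the AMP output. The plan for this part is: (i) compute the first moment of the band partition function $Z_{\mathrm{band}} = \sum_{\sigma \in \text{band}} e^{H(\sigma)}$ and show its exponential rate matches that of the full partition function $Z$ (so the band carries essentially all the Gibbs mass); (ii) compute a conditional second moment $\E[Z_{\mathrm{band}}^2 \mid \text{AMP filtration}]$ over a system of $k$ ``orthogonal'' replicas — replicas constrained to be mutually near-orthogonal and each near $m^{(t)}$ — and show it matches the square of the first moment up to subexponential factors; (iii) conclude by a Paley–Zygmund / concentration argument that $\langle \sigma\rangle$ lies in (essentially) this band, hence $\frac1n\|\langle\sigma\rangle - m^{(t)}\|_2^2 \to 0$. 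The ``many orthogonal replicas'' device is what tames the second moment: decorrelated replicas make the cross terms factorize, and orthogonal invariance of $J$ lets one integrate out $O$ against such replica configurations cleanly (the relevant spherical/orthogonal integrals are controlled by $R(\cdot)$, e.g.\ via the formulas underlying \cite{fan2021replica}).

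Assembling the pieces, the proof outline is: (1) Define the AMP iteration and its state evolution; invoke the rigorous state-evolution theorem for orthogonally invariant matrices to get almost-sure convergence of the iterate's empirical law and of quadratic functionals like $\frac1n\|m^{(t)}\|_2^2$ and $\frac1n\langle m^{(t)}, J m^{(t)}\rangle$. (2) Show the state evolution parameters converge to $(q_*,\sigma_*^2)$ and that the iteration is contractive for $\beta<\beta_0$, giving a Cauchy sequence of iterates and a limit $m_\infty$ solving \eqref{TAP} in $L^2$. (3) Via the conditional first/second moment analysis on the thin band around $m^{(t)}$, show the Gibbs measure concentrates its overlap with $m^{(t)}$ near $q_*$ and that $\langle\sigma\rangle \approx m^{(t)}$ in normalized $\ell^2$ for large $t$. (4) Combine (2) and (3): $\frac1n\|\langle\sigma\rangle - \tanh(h+\beta J\langle\sigma\rangle - \beta R(\beta(1-q_*))\langle\sigma\rangle)\|_2^2$ is bounded by the same quantity for $m^{(t)}$ (which vanishes) plus the AMP-to-Gibbs discrepancy (which vanishes as $t\to\infty$), using Lipschitz continuity of $\tanh$ and operator-norm bounds on $J$ to control the error propagation; sending $n\to\infty$ then $t\to\infty$ yields \eqref{eq:tap_informal}. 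The delicate points throughout are uniformity in $t$ of the band estimates and ruling out that the AMP fixed point is a spurious solution — both of which the orthogonal-replica second moment computation is designed to handle, and both of which rely crucially on the smallness of $\beta$.
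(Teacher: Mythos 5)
Your high-level architecture matches the paper's: use the \c{C}akmak--Opper AMP for the orthogonally invariant ensemble, apply the rigorous state evolution from \cite{fan2021replica,fan2020approximate} to show the iterates $m^t$ converge (in the $n\to\infty$ then $t\to\infty$ sense) to an asymptotic TAP solution, and then show $\langle\sigma\rangle\approx m^t$ by a conditional first- and second-moment analysis of a band around $m^t$ together with an orthogonal-replica device, finally combining the two via Lipschitz continuity of $\tanh$ and operator-norm bounds. This is indeed the paper's outline, and your description of steps (1), (2) and (4) and of the conditional first-moment/second-moment computations is essentially right.

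However, there is a genuine gap in your step (iii), which is the crux of the argument. You claim that showing the Gibbs measure concentrates on the band $\{\sigma:\langle\sigma,m^t\rangle\approx n q_*\}$ and invoking Paley--Zygmund lets you conclude $\frac{1}{n}\|\langle\sigma\rangle-m^t\|_2^2\to 0$. That inference fails: the band condition controls only the single inner product $\langle\sigma,m^t\rangle$, and since each replica has $\|\sigma\|_2^2=n$ while $\|m^t\|_2^2\approx nq_*<n$, a replica lying in the band satisfies $\frac{1}{n}\|\sigma-m^t\|_2^2\approx 1-q_*$, which is bounded away from $0$. So individual replicas in the band are \emph{not} close to $m^t$ (your phrase ``each near $m^{(t)}$'' is a mischaracterization), and it is not even true that $\langle\sigma\rangle$ being ``in the band'' (suitably interpreted) gives the $L^2$ closeness, absent additional control on $\|\langle\sigma\rangle\|_2^2$. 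The paper closes this gap with two additional ingredients that your proposal omits. First, it draws $N$ i.i.d.\ replicas from the Gibbs measure and proves, via the log-Sobolev/Poincar\'e inequality of Bauerschmidt--Bodineau for the high-temperature Gibbs measure (\Cref{concentrationpart}), that $\frac{1}{n}\|\frac{1}{N}\sum_i\sigma^i-\langle\sigma\rangle\|_2^2\le C/N$ with probability $1-e^{-n}$; this is a purely Gibbs-measure concentration statement, separate from the AMP/band analysis, and is indispensable for connecting the replicas to $\langle\sigma\rangle$. Second, the reason orthogonality of the deviations $\{\sigma^i-m^t\}$ is needed is a \emph{geometric} one: if the $N$ replicas all lie in the band around $m^t$ and are pairwise near-orthogonal, a direct expansion of $\frac{1}{n}\|\frac{1}{N}\sum_i\sigma^i-m^t\|_2^2$ gives $\le \frac{1-q_*}{N}+O(\delta)$ (\Cref{Claim2}) — it is the replica \emph{average}, not any individual replica, that is close to $m^t$, and only because of orthogonality do the cross terms vanish. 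Your description instead frames the orthogonal-replica device as purely a factorization tool for the second moment, which is a misreading of its role. Intersecting the two events (probability $1-e^{-n}$ vs.\ probability $\ge e^{-o(n)}$) then gives $\frac{1}{n}\|\langle\sigma\rangle-m^t\|_2^2\le C/N+O(\delta)+o_t(1)$, and sending $N\to\infty$, $\delta\to 0$, $t\to\infty$ finishes. Without the log-Sobolev concentration step and the correct geometric reading of orthogonality, step (iii) of your proposal does not go through.
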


\subsection{Related literature}

The limiting log-partition function of the SK model was first derived using the non-rigorous replica method in statistical physics (see e.g. \cite{mezard1987spin}). A mysterious feature of the solution was the ``replica-symmetry breaking'' scheme, where the log-partition function was approximated using variational problems over progressively complicated collections of ``functional order parameters''.
 Subsequent investigations into the physical underpinnings of this method
uncovered a rich structure---the Gibbs measure was conjectured to be a mixture
of pure states, organized hierarchically in an ultrametric tree \cite{mezard1984nature,mezard1984replica,mezard1985microstructure}. The depth of this ultrametric tree was reflected in the levels of symmetry-breaking approximation used.
A rigorous verification of this heuristic picture for the SK and related mixed
$p$-spin models has been achieved over the past three decades, and is arguably one
of the most prominent success stories in mathematical physics and probability;
we refer the interested reader to \cite{panchenko2013sherrington} for a
textbook introduction.

Over the past decade, TAP equations for the SK model at high temperature have
also been established using
several distinct approaches, including a rigorous instantiation of the cavity
method \cite{talagrand2010mean}, Stein's method \cite{chatterjee2010spin}, and a
dynamical approach \cite{adhikari2021dynamical}. The picture at low temperature
is more complex---it is conjectured that a version of the TAP equation is true
within each pure state of the Gibbs measure
\cite{mezard1984nature,mezard1984replica,mezard1985microstructure}. A version of
this conjectured picture was established by \cite{auffinger2019thouless},
utilizing the approximate ultrametric decomposition of
\cite{jagannath2017approximate}. Associated with the TAP equations is the TAP
approximation to the log-partition function, which was verified by
\cite{chen2018tap} using the Parisi formula. Recent advances
\cite{belius2019tap,subag2018free,chen2018generalized,chen2021generalized} have
also established the accuracy of the TAP approximation from first principles, bypassing the Parisi formula.

For the orthogonally invariant spin glass model that forms the focus of
our work, the Replica-Symmetric and
1-Replica Symmetry Breaking (1RSB) approximations for the free energy were
calculated using the replica method in \cite{marinari1994replica} in the case of
the ROM, and extended to the general model in \cite{cherrier2003role}.
However, we emphasize that rigorous results are quite sparse---in
particular, with the exception
of contexts where the couplings matrix may be factored to have a Gaussian
component such as in \cite{barbier2018mutual}, methods based around Guerra
interpolation ideas \cite{guerra2003broken} seem difficult to apply.
\cite{bhattacharya2016high} rigorously characterized
the limiting log-partition function at high temperature when the external field
is $h=0$, using a second-moment method that leveraged
asymptotics of spherical integrals due to Guionnet and Maida
\cite{guionnet2005fourier}. More recently,
\cite{fan2021replica} extended this characterization to $h \neq 0$ using a
conditional second moment method, where the conditioning is performed on
iterates of an Approximate Message Passing algorithm of \c{C}akmak and
Opper in \cite{ccakmak2019memory} that is designed to solve the TAP equations.
This approach was introduced by Bolthausen for the SK model
\cite{bolthausen2014iterative,bolthausen2018morita}, and is also related to a
strategy employed in \cite{ding2019capacity} in the context of the Ising
perceptron. The algorithm of \cite{ccakmak2019memory} is in turn based upon
ideas of Vector/Orthogonal AMP that were developed in the context of compressed
sensing and linear models in
\cite{takeuchi2017rigorous,ma2017orthogonal,rangan2019vector}.

The analyses of \cite{fan2021replica} pertained only to the limiting
log-partition function, and left as an open question the validity of the TAP
equations for characterizing the magnetization vector. To our knowledge,
this has remained conjectural since the work of \cite{parisi1995mean}
even at high temperatures, as existing
techniques for establishing this validity in the SK model rely crucially on the
independence of the coupling variables $\{J_{ij}: i < j\}$
and do not extend easily to the orthogonally invariant family.
We develop here a
different approach, based on a geometric interpretation of the conditional
second moment idea of
\cite{bolthausen2018morita,ding2019capacity,fan2021replica} when applied to a
system of many replicas, and we describe this approach in more detail below.

\subsection{Outline of the proof}
The proof combines several distinct ideas involving Approximate Message Passing
algorithms, conditional second moment analysis, and concentration of the
high-temperature Gibbs measure.
\begin{itemize}
\item[(i)] We analyze the sequence $m^1,m^2,m^3,\ldots$ constructed by the AMP
algorithm of \cite{ccakmak2019memory} for approximating a TAP solution.
It is direct to show, from the state
evolution established for this algorithm in \cite{fan2020approximate}, that
this sequence asymptotically satisfies the TAP equations (\ref{TAP}) in the sense
\begin{equation}\label{eq:mtsatisfiesTAP}
\lim_{t\to \infty} \lim_{n\to \infty} \frac{1}{n}\norm{m^t-\tanh \qty(h+ \beta J
m^t- \beta R(\beta(1-q_*))m^t)}_2^2=0.
\end{equation}
Our main additional contribution is to show that this sequence converges
to the magnetization,
\begin{equation}\label{eq:mtconvergestomag}
\lim_{t \to \infty} \lim_{n\to \infty} \frac{1}{n} \| \langle
\sigma \rangle - m^t\|_2^2 =0.
\end{equation}
Together, these two statements imply
that $\langle \sigma \rangle$ itself satisfies (\ref{TAP}) asymptotically.

\item[(ii)]
To analyze $\langle \sigma \rangle$, we introduce
$N$ replicas $\sigma^1,\ldots,\sigma^N$ from the product Gibbs measure
$P^N:=P(\sigma^1) \times \ldots \times P(\sigma^N)$, and we consider the
approximation of $\langle \sigma \rangle$ by the sample average of these
replicas. It is clear that this approximation satisfies $\langle \frac{1}{n}\|
\frac{1}{N} \sum_i \sigma^i - \langle \sigma \rangle \|_2^2 \rangle \asymp 1/N$,
where $\langle \cdot \rangle$ is the Gibbs average with respect to $P^N$.
Leveraging recent results of \cite{bauerschmidt2019very} showing that the
high-temperature Gibbs measure $P(\sigma)$ satisfies a log-Sobolev inequality,
we then deduce that $\frac{1}{n}\|
\frac{1}{N} \sum_i \sigma^i - \langle \sigma \rangle \|_2^2 \leq C/N$ with
exponentially high probability under $P^N$.
\item[(iii)] Next, we use a geometric argument to show that this
replica average $\frac{1}{N}\sum_i \sigma^i$
is, with non-negligible probability under $P^N$, also close to $m^t$.
Specifically, we establish $\frac{1}{n}\| \frac{1}{N} \sum_i \sigma^i - m^t
\|_2^2 \leq 1/N+o_t(1)$ with probability at least $\exp(-o(n))$ under
$P^N$, where $o_t(1) \to 0$ as the number of AMP iterations $t \to \infty$.


We prove this by showing that the limiting log-partition function of $P^N$
coincides, to leading order, with that restricted to only a subset of
configurations $(\sigma^1,\ldots,\sigma^N) \in \Sigma_n^N$---which we denote by
$B_N(m^t)$---where each $\sigma^i$ lies on the band $(\sigma^i-m^t)^\top m^t
\approx 0$ and, in addition, $\{\sigma^i-m^t:i=1,\ldots,N\}$ are approximately
pairwise orthogonal. For such configurations, we
indeed must have $\frac{1}{N} \sum_i \sigma^i \approx m^t$ as desired.
This approach is related to and inspired by an idea of
\cite{subag2018free,chen2018generalized}, who showed an analogous statement for
bands of the configuration space centered around ancestor states in
low-temperature regimes of the spherical and Ising mixed $p$-spin models.
Here, we prove orthogonality of the replicas around the AMP iterate $m^t$ in a
high-temperature regime of our model, by using an alternative strategy of
proving a lower bound for the log-partition function restricted to $B_N(m^t)$
via an extension of the conditional second moment analysis
in \cite{fan2021replica}.
\item[(iv)] Finally, we combine steps (ii) and (iii) to conclude that
$\frac{1}{n}\| \langle \sigma \rangle - m^t \|_2^2 \leq C/N+o_t(1)$ (with
positive probability under $P^N$, and hence deterministically under
$P^N$ as this
event does not depend on $\sigma^1,\ldots,\sigma^N$). Taking the limits $N \to \infty$ and $t
\to \infty$ then concludes the proof.
\end{itemize}
We remark that $P^N$ is a random probability distribution due to the randomness
of the disorder $O$, and each statement about $P^N$ above is understood as
holding with high probability over $O$. The condition of high
temperature is used in several places in this argument, including to show
uniqueness of the fixed point of (\ref{fixedpoint}), concentration under the Gibbs
measure in step (ii), and---most crucially---a globally concave upper bound to
the variational formulas arising from the conditional first and second moment
analyses in step (iii).

The idea in step (iii) of analyzing mutually orthogonal replicas in a band of the configuration space appeared originally
in \cite{subag2018free} in connection with the TAP approximation for spherical
mixed $p$-spin glasses, and was used to establish the TAP formula for the free energy of the Ising mixed $p$-spin models \cite{chen2018generalized,chen2021generalized}.
In contrast to these previous analyses of the mixed $p$-spin
models, where the free energy restricted to the band was evaluated based on the
Parisi framework, we evaluate the free energy instead using a conditional second
moment computation, relying on the AMP state evolution and large deviations
techniques. This gives an intuitive geometric connection between the TAP
equations \eqref{TAP} and the variational analysis for the
replica symmetric free energy provided in \cite{fan2021replica}. We emphasize
that this approach should be generally applicable to a broad range of mean-field
models, and it provides a different proof of the TAP equations
even for the case of the SK model at high temperature.

\subsection*{Acknowledgments}
ZF was supported in part by NSF DMS-1916198 and DMS-2142476. SS was supported in part by a Harvard Dean's Competitive Fund award. 

\section{Preliminaries} 

\subsection{Model rescaling}

We may assume without loss of generality that (i) $\int x\,\mu_{D}(d x)=0$ and
(ii) $\int x^{2}\,\mu_{D}(d x)=1$. To see (i), note that if $R_a$
denotes the R-transform of $\mu_D+a$, then $R_a(z)=R(z)+a$ for any $a\in \R$. Thus we can shift $\mu_D$ by any constant without changing \eqref{eq:tap_informal}. In addition, we can always absorb $\int x^2 \mu_{D}(d x)$ into the inverse temperature $\beta$---this justifies (ii). 
We adopt the convenient notation of \cite[Section~2.1]{fan2021replica} and absorb $\beta$
into the couplings matrix $J$ after this centering and rescaling. That is, we define \begin{equation}\label{barnotation}
    \begin{aligned}
    \bar{J}=\beta J, \quad \bar{D}=\operatorname{diag}\left(\bar{d}_{1}, \ldots, \bar{d}_{n}\right)=\beta D, \quad
    \mu_{\bar{D}}=\lim _{n \rightarrow \infty} \frac{1}{n} \sum_{i=1}^{n} \delta_{\bar{d}_{i}}, \quad \bar{d}_{+}=\beta d_{+}. 
    \end{aligned}
\end{equation}

The Gibbs distribution and partition function may be written in this rescaled notation as 
\begin{equation}\label{Gibbsbar}
    P(\sigma)=\frac{1}{Z} \exp \left(\frac{1}{2} \sigma^{\top} \bar{J}
\sigma+h^{\top} \sigma\right), \qquad
    Z=\sum_{\sigma \in\Sigma_n} \exp \left(\frac{1}{2} \sigma^{\top} \bar{J}
\sigma+h^{\top} \sigma\right).
\end{equation} 
We denote the Cauchy- and R-transforms of $\mu_{\bar{D}}$ as
 \begin{equation*}
\bar{G}(z)=\int \frac{1}{z-x} \mu_{\bar{D}}(d x), \quad \bar{R}(z)=\bar{G}^{-1}(z)-\frac{1}{z},
\end{equation*}
where $\bar{G}(z)$ is defined on $(\bar{d}_+,\infty)$ and $\bar{R}(z)$ on
$(0,\bar{G}(\bar{d}_+))$. These are related to the
original Cauchy- and R-transforms of $\mu_D$ by
$\bar{G}(z)=\beta^{-1}G(z/\beta)$ and $\bar{R}(z)=\beta R(\beta z)$.
The fixed-point equation \eqref{fixedpoint} for $q_*,\sigma_*^2$ is written in terms of $\bar{R}(z)$ as
\begin{equation}\label{fixedpointbar}
    q_{*}=\mathbb{E}\left[\tanh \left(\mathsf{H}+\sigma_{*} \mathsf{G}\right)^{2}\right], \quad \sigma_{*}^{2}=q_{*} \bar{R}^{\prime}\left(1-q_{*}\right). 
\end{equation} 
The desired conclusion (\ref{eq:tap_informal}) of Theorem
\ref{mainthm_informal} may then be restated as
\begin{equation}\label{eq:tap_rescaled}
\lim_{n \rightarrow \infty} \frac{1}{n}\left\|\langle\sigma\rangle-\tanh
\left(h+\bar{J}\langle\sigma\rangle-\bar{R}\left(1-q_{*}\right)\langle\sigma\rangle\right)\right\|_{2}^{2}=0,
\end{equation}
and we will show Theorem \ref{mainthm_informal} in this form.
This is equivalent to Theorem \ref{mainthm_informal} for the original model,
modulo the notational change of \eqref{barnotation}.

\subsection{AMP for solving the TAP equations}\label{sectionAMP}

We review in this section the Approximate Message Passing algorithm of
\cite{ccakmak2019memory}, on which we base our proof of
Theorem~\ref{mainthm_informal}.
Define
\begin{equation*}
    \lambda_{*}=\bar{G}^{-1}\left(1-q_{*}\right)=\bar{R}\left(1-q_{*}\right)+\frac{1}{1-q_{*}}
\end{equation*}
so that $\bar{G}\left(\lambda_{*}\right)=1-q_{*}$. This is well-defined for any $\beta \in\left(0, G\left(d_{+}\right)\right)$, since 
$1-q_{*} \leq 1<\bar{G}\left(\bar{d}_{+}\right)=G\left(d_{+}\right) / \beta$.
Consider the matrix
\begin{equation}\label{defGamma}
\Gamma=\frac{1}{1-q_{*}}\left(\lambda_{*} I-\bar{J}\right)^{-1}-I
\end{equation}
which admits the eigen-decomposition
\begin{equation*}
\Gamma=O^{\top} \Lambda O, \quad \Lambda=\frac{1}{1-q_{*}}\left(\lambda_{*} I-\bar{D}\right)^{-1}-I.
\end{equation*}
In particular, $\Gamma$ is also orthogonally-invariant in law.

Let $y^{0} \in \mathbb{R}^{n}$ be an initialization of the AMP algorithm with entries
\begin{equation}\label{AMPInit}
y_{1}^{0}, \ldots, y_{n}^{0} \stackrel{i i d}{\sim} \mathcal{N}\left(0, \sigma_{*}^{2}\right)
\end{equation}
where $\sigma_{*}^{2}$ is defined in \eqref{fixedpointbar}. 
For each $t\ge 1$, the AMP iterates $(x^{t}, s^{t}, y^t)\in \R^n\times \R^n \times \R^n$ are defined as
\begin{equation}\label{AMP}
\begin{aligned}
x^{t} =\frac{1}{1-q_{*}} \tanh \left(h+y^{t-1}\right)-y^{t-1}, \qquad
s^{t} =O x^{t}, \qquad
y^{t} =O^{\top} \Lambda s^{t}.
\end{aligned}
\end{equation} 
An approximate solution $m^t\in \R^n$ of the TAP equations
\begin{equation}\label{TAPbar}
	m=\tanh \left(h+\bar{J} m-\bar{R}\left(1-q_{*}\right) m\right)
\end{equation}
is obtained from the iterates of the algorithm as
\begin{equation}\label{mtdef}
m^{t}=\left(1-q_{*}\right)\left(x^{t}+y^{t-1}\right)=\tanh \left(h+y^{t-1}\right).
\end{equation}
For any fixed point $(x,y)$ of this AMP algorithm \eqref{AMP}, it may be checked that
$m=\left(1-q_{*}\right)(x+y)=\tanh (h+y)$ exactly satisfies the TAP equations
\eqref{TAPbar}. We will furthermore show that 
these iterates
$\{m^t\}$ converge to a solution of \eqref{TAPbar}---a precise statement is in
\Cref{onemorecor} to follow.

For each $t\ge 1$, we define a corresponding sigma-field (in the probability space of $O$) as
\begin{equation*}
\mathcal{G}_{t}=\mathcal{G}\left(y^{0}, x^{1}, s^{1}, y^{1}, \ldots, x^{t}, s^{t}, y^{t}\right)
\end{equation*}
The state evolution of this AMP algorithm
is summarized in \Cref{section:SEandRS}. 

\subsection{The replica-symmetric free energy}

Define
\begin{equation*}
\begin{aligned}
\Psi_{\mathrm{RS}}&:=\mathbb{E}\left[\log 2 \cosh \left(\mathsf{H}+\sigma_{*} \mathsf{G}\right)\right]+\frac{q_{*}}{2} \bar{R}\left(1-q_{*}\right)-\frac{q_{*}\left(1-q_{*}\right)}{2} \bar{R}^{\prime}\left(1-q_{*}\right)+\frac{1}{2} \int_{0}^{1-q_{*}} \bar{R}(z) d z.
\end{aligned}
\end{equation*}
The following result of \cite{fan2021replica} shows that this is
the limit of the free energy for sufficiently small $\beta>0$.

\begin{Theorem}[{\cite[Theorem 1.3]{fan2021replica}}]\label{freeRS}
In the setting of \Cref{mainthm_informal}, $\lim _{n \rightarrow \infty} \frac{1}{n} \log Z=\Psi_{\mathrm{RS}}$ almost surely. 
\end{Theorem}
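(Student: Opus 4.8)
The plan is to prove $\tfrac1n\log Z = \Psi_{\mathrm{RS}} + o(1)$ in probability by a matching pair of (conditional) moment bounds, and then to upgrade to almost-sure convergence via concentration of $\tfrac1n\log Z$. The basic analytic input is the spherical-integral asymptotic of Guionnet--Ma\"ida \cite{guionnet2005fourier}: for a deterministic vector $v$ with $\tfrac1n\|v\|_2^2 \to \rho$ and $O \sim \operatorname{Haar}$, one has $\tfrac1n\log \E_O \exp(\tfrac12 v^\top O^\top \bar D O\, v) \to \tfrac12 \int_0^{\rho} \bar R(z)\,dz$, together with its two-vector generalization, which depends on the $2\times 2$ Gram matrix of the two vectors. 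This is what turns the quadratic form $\tfrac12\sigma^\top \bar J\sigma = \tfrac12(O\sigma)^\top \bar D(O\sigma)$ into an explicit function of overlaps.

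The obstruction is that the annealed bound $\tfrac1n\log\E[Z]$ overshoots $\Psi_{\mathrm{RS}}$ (already for SK, where it equals $\log 2 + \beta^2/4$), and $\E[Z^2]$ is dominated by correlated replica pairs. Both are dealt with by conditioning \`a la Bolthausen \cite{bolthausen2014iterative,bolthausen2018morita}. Run the AMP algorithm \eqref{AMP} for $t$ steps and condition on $\mathcal{G}_t$. Given $\mathcal{G}_t$, the matrix $O$ is Haar conditioned on $\lesssim t$ linear constraints, hence equals a deterministic part of rank $\lesssim t$ (determined by the iterates) plus an independent Haar matrix on the orthogonal complement; the state evolution of \cite{fan2020approximate} identifies all the limiting inner products among $h, y^0,\ldots, y^{t}$ and the TAP iterate $m^t = \tanh(h+y^{t-1})$. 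One then restricts attention to the band $B_t = \{\sigma\in\Sigma_n : \tfrac1n\langle\sigma,m^t\rangle\approx q_*\ \text{and } \sigma \text{ is aligned with the AMP directions as predicted by state evolution}\}$.

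For the upper bound I would compute the conditional first moment and show $\tfrac1n\log\E[Z\mid\mathcal{G}_t] \le \Psi_{\mathrm{RS}} + o_t(1) + o_n(1)$ with probability $\to 1$: the conditioning is designed precisely so that it removes the discrepancy between the annealed and quenched free energies, and the remaining variational problem over the overlap profile of $\sigma$ with the conditioning directions is globally maximized at the replica-symmetric point, provided $\beta$ is small enough that the associated functional is concave. Markov's inequality then gives $\tfrac1n\log Z \le \Psi_{\mathrm{RS}} + o(1)$ with high probability. For the lower bound I would compute $\E[Z_{B_t}\mid\mathcal{G}_t]$ and $\E[Z_{B_t}^2\mid\mathcal{G}_t]$, showing the first is $\exp(n\Psi_{\mathrm{RS}} + o(n))$ and the second is $(1+o(1))$ times its square, so that Paley--Zygmund gives $\tfrac1n\log Z \ge \Psi_{\mathrm{RS}} - o(1)$ with probability bounded away from zero. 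Finally, since $\{\tfrac1n\log Z \ge \Psi_{\mathrm{RS}} - \e\}$ is $O$-measurable and $\tfrac1n\log Z$ concentrates around its mean at scale $o(1)$ (it is Lipschitz in $O\in\SO(n)$, or one may add a small Gaussian component and smooth), the lower bound in fact holds with probability $\to 1$; combined with the upper bound and Borel--Cantelli this yields the almost-sure statement. This is, in outline, the route of \cite{fan2021replica}, building on the $h=0$ treatment of \cite{bhattacharya2016high}.

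The main obstacle is the two-replica estimate inside the conditional second moment: one needs the joint spherical-integral asymptotics for a pair $(\sigma,\sigma')\in B_t \times B_t$, conditionally on $\mathcal{G}_t$ and expressed through their Gram matrix with the conditioning directions, and then must verify that the resulting two-replica variational functional is \emph{strictly} maximized at the decoupled overlap $\tfrac1n\langle\sigma,\sigma'\rangle \approx q_*^2$, so that all other overlaps contribute only $e^{o(n)}$. Proving this strict maximality and the accompanying global concavity is where the high-temperature hypothesis $\beta < \beta_0(\mu_D)$ is genuinely used, and it is the delicate heart of the argument; it is also what forces the passage through the AMP conditioning, since without it the off-diagonal contribution truly dominates.
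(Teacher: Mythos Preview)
Your outline is essentially the route of \cite{fan2021replica}, which this paper simply cites (see Lemma~\ref{condmoments}): conditional first and second moment of $Z$ given $\mathcal{G}_t$, Paley--Zygmund, and concentration of $\tfrac1n\log Z$ over $O$ to upgrade a nonvanishing-probability lower bound to an almost-sure one. Two corrections are worth noting.

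First, the conditional second moment is not ``$(1+o(1))$ times its square''; what is shown (and what suffices) is only
\[
\frac{1}{n}\log \mathbb{E}[Z^2\mid\mathcal{G}_t]\;=\;2\Psi_{\mathrm{RS}}+o_t(1)+o_n(1),
\]
i.e.\ the ratio $\mathbb{E}[Z^2\mid\mathcal{G}_t]/\mathbb{E}[Z\mid\mathcal{G}_t]^2$ is $e^{o(n)}$. Paley--Zygmund then gives a lower bound with probability at least $e^{-o(n)}$, and it is only after invoking concentration of $\tfrac1n\log Z$ (Lipschitz in $O\in\SO(n)$, hence Gromov-type concentration at scale $n^{-1/2}$) that this becomes probability $1-o(1)$. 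If you truly needed a $(1+o(1))$ ratio you would be proving a CLT-type statement, which is both harder and unnecessary.

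Second, the dominant overlap in the two-replica variational problem is $p_*=q_*$, not $q_*^2$. If $\sigma,\sigma'$ both lie on the band around $m^t$ and $\sigma-m^t,\sigma'-m^t$ are nearly orthogonal to each other and to $m^t$, then $\tfrac1n\sigma\cdot\sigma'\approx \tfrac1n\|m^t\|_2^2=q_*$; this is exactly the decoupled point at which the paper's $\widetilde\Phi_{2,t}$ is maximized (cf.\ the proof of Lemma~\ref{Claim4}). Your $q_*^2$ would be the overlap of two \emph{independent} $\pm1$ vectors each biased by $m^t$ coordinatewise, which is a different object.

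Finally, the band restriction you introduce for the lower bound is not needed in \cite{fan2021replica}: there the conditional first and second moments of the \emph{unrestricted} $Z$ already match at leading order (Lemma~\ref{condmoments}). The band only enters in the present paper for a different purpose, namely to localize replicas near $m^t$ in the proof of Lemma~\ref{Claim1}.
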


This theorem is proved in \cite{fan2021replica} via computations of the first
and second moments of $Z$ conditioned on $\cG_t$. We reproduce the results of
this computation here for later reference.

\begin{Lemma}[{\cite[Lemmas~3.1 and 4.1]{fan2021replica}}]\label{condmoments}
In the setting of \Cref{mainthm_informal},
\begin{align}
    \lim _{t \rightarrow \infty} \lim _{n \rightarrow \infty} \frac{1}{n} \log
\mathbb{E}\left[Z \mid
\mathcal{G}_{t}\right]&=\Psi_{\mathrm{RS}}\label{firstm}\\
    \lim _{t \rightarrow \infty} \lim _{n \rightarrow \infty} \frac{1}{n} \log
\mathbb{E}\left[Z^{2} \mid \mathcal{G}_{t}\right]&=2 \Psi_{\mathrm{RS}}
\label{secondm}
\end{align}
where the inner limits exist almost surely for each fixed $t$.
\end{Lemma}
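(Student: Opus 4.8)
The statement to prove is \Cref{condmoments}: the conditional first and second moments of $Z$ given $\cG_t$ satisfy $\tfrac1n\log\E[Z\mid\cG_t]\to\Psi_{\mathrm{RS}}$ and $\tfrac1n\log\E[Z^2\mid\cG_t]\to 2\Psi_{\mathrm{RS}}$ in the iterated limit $t\to\infty$ after $n\to\infty$. Since this is quoted verbatim from \cite[Lemmas 3.1, 4.1]{fan2021replica}, the natural proof is to reproduce the conditional second-moment computation from that paper; I sketch how I would organize it.

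First I would identify the conditional law of $O$ given $\cG_t$. The sigma-field $\cG_t$ records the finitely many vectors $x^1,s^1,y^1,\dots,x^t,s^t,y^t$, and by the recursion \eqref{AMP} these are equivalent to the linear constraints $Ox^j=s^j$ and $O^\top\Lambda s^j=y^j$ for $j\le t$. Because $O$ is Haar on $\SO(n)$, its conditional law given $\cG_t$ is Haar on the corresponding coset of the stabilizer of these constraints: letting $\Pi_t$ be the orthogonal projection onto the span $V_t$ of the vectors appearing in the constraints, $O$ acts deterministically on $V_t$ and as a fresh Haar rotation on $V_t^\perp$. Hence for a fixed $\sigma\in\Sigma_n$, the quadratic form $\sigma^\top\bar J\sigma=(O\sigma)^\top\bar D(O\sigma)$ splits into a term determined by the projection of $\sigma$ onto $V_t$ plus a spherical integral over the free Haar rotation on $V_t^\perp$; the exponential asymptotics of the latter are governed by the Guionnet--Maida rate function attached to $\mu_{\bar D}$, which is where $\bar R$ and $\bar G$ enter.

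For the first moment, I would then write $\E[Z\mid\cG_t]=\sum_{\sigma\in\Sigma_n}\exp(h^\top\sigma)\cdot\E[\exp(\tfrac12\sigma^\top\bar J\sigma)\mid\cG_t]$, apply the spherical-integral asymptotics to the inner expectation, and obtain an expression of the form $\exp(n\,\phi_1+o(n))$ where $\phi_1$ depends on the empirical joint law of $(\sigma_i,h_i,(x^j_i)_j,(y^j_i)_j)$ and on the overlaps of $\sigma$ with the AMP iterates. A finite-dimensional Laplace argument over $\Sigma_n$ reduces $\tfrac1n\log\E[Z\mid\cG_t]$ to a supremum of $\phi_1$; substituting the almost-sure limiting overlap values from the state evolution (\Cref{section:SEandRS}) and then letting $t\to\infty$, the overlaps converge to the fixed-point values $q_*,\sigma_*^2$ of \eqref{fixedpointbar} and the maximizer collapses to the replica-symmetric point, yielding $\Psi_{\mathrm{RS}}$; this is \eqref{firstm}. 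For the second moment one runs the same machinery on a pair $(\sigma,\tau)$, now with the higher-rank spherical-integral asymptotics which additionally feature the cross-overlap $n^{-1}\sigma^\top\tau$; one gets $\tfrac1n\log\E[Z^2\mid\cG_t]\to\sup\phi_2$, and $\sup\phi_2=2\Psi_{\mathrm{RS}}$ precisely when the supremum is attained at the decoupled configuration (mutual overlap $q_*$, otherwise independent), where $\phi_2$ factors as twice the first-moment functional.

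The crux, and the main obstacle, is to show this decoupled point is the \emph{global} maximizer of $\phi_2$ --- any better saddle would signal replica-symmetry breaking and break the second-moment method. This is where the high-temperature hypothesis $\beta<\beta_0(\mu_{\bar D})$ is genuinely used: for small $\beta$ the $\bar R$-contribution is a controlled perturbation and the relevant objective becomes globally concave in the order parameters, pinning down the replica-symmetric point as the unique maximizer. A secondary technical point is making the Guionnet--Maida asymptotics uniform enough to survive the conditioning on the $t$ linear constraints encoded by $\cG_t$. Both steps are carried out in \cite{fan2021replica}, so in the present paper it suffices to cite \cite[Lemmas~3.1 and 4.1]{fan2021replica}; the above is the argument behind that citation.
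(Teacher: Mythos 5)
Your sketch tracks the architecture of the proof in \cite{fan2021replica} correctly at a high level: identify the conditional law of $O$ given $\cG_t$ (fixed on a finite-dimensional subspace, Haar on its complement), replace the conditional expectation of $\exp(\tfrac12\sigma^\top \bar J\sigma)$ by Guionnet--Maida spherical-integral asymptotics, apply a Laplace/Varadhan argument over the empirical overlaps of $\sigma$ with $(h, X_t, Y_t)$, use the state evolution to identify the replica-symmetric saddle, send $t\to\infty$, and for the second moment run the same analysis on a pair $(\sigma,\tau)$ and use high-temperature concavity to show the decoupled RS point is the \emph{global} maximizer. That is indeed the content of \cite[Lemmas~3.1 and 4.1]{fan2021replica}, and the paper does not reprove it.

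However, your closing claim --- that it ``suffices to cite \cite[Lemmas~3.1 and 4.1]{fan2021replica}'' --- skips the one step that actually requires new work here. As the Remark following \Cref{condmoments} points out, \cite{fan2021replica} proves those lemmas for $O\sim\Haar(\O(n))$, whereas this paper defines $\cG_t$ with $O\sim\Haar(\SO(n))$, and this change \emph{does} alter the conditional law of $Z$ given $\cG_t$ (even though it leaves the unconditional law of $Z$ unchanged). A bare citation is therefore not sufficient; the paper bridges the gap in Appendix~\ref{appendix:SOn}, proving \Cref{lemma:QAOA} (the action of $\Haar(\O(n))$ and $\Haar(\SO(n))$ on $k<n$ vectors agree in law) and \Cref{conditionO} (the $\SO(n)$ analogue of \cite[Proposition~2.6]{fan2021replica}, where the sign conventions for the orthonormal completions $V_{A^\perp},V_{B^\perp}$ are now pinned down so that $\tilde O\sim\Haar(\SO(n-k))$), and then checking that Lemmas~3.2 and 4.2 of \cite{fan2021replica} go through verbatim with these replacements. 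Your description of the conditional Haar law implicitly uses $\SO(n)$, so your sketch is internally consistent, but you should flag this translation as the genuinely new ingredient in the present paper's justification of \Cref{condmoments} rather than treating the citation as self-contained.
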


\begin{Remark}
We assumed in \Cref{as}\ref{OisHaar} that $O\sim
\operatorname{Haar}(\mathbb{SO}(n))$, in contrast to \cite{fan2021replica}
which assumed $O\sim \operatorname{Haar}(\mathbb{O}(n))$. This difference has no
effect on the definition of the model \eqref{Gibbsbar} or the unconditional law
of $Z$. However, it does affect the
law of the AMP iterates $\{s^t\}$ and hence
also the law of $Z$ conditional on $\cG_t$. This conditional law of $Z$
is slightly different from that studied in \cite{fan2021replica}.

In this paper, to establish various concentration results for restrictions of
the free energy, it will be more convenient to define $\{s^t\}$ and this
conditional law of $Z$ based on $O \sim \Haar(\SO(n))$.
The arguments of \cite{fan2021replica} and the result of
\Cref{condmoments} hold equally for $O\sim \Haar(\SO(n))$, and we explain this
in Appendix \ref{appendix:SOn}.
\end{Remark}

\subsection{Notation and conventions}
We write $\N^+$ for the positive natural numbers, and $[N]:=\{1,\ldots,N\}$ for any
$N\in \N^+$. 
For $N\in\N^+$, $\left(\sigma^{i}\right)_{i=1, \ldots, N}$ denotes $N$ \textit{replicas} that take value in $\Sigma_n^{N}$. For
any $f: \Sigma_n^{N} \mapsto \R$, we denote the Gibbs expectation with
respect to $P(\sigma^1) \times \ldots \times P(\sigma^N)$
also as $\langle f \rangle$. Formally, 
\begin{equation*}
    \expval{f}:=\frac{1}{Z^N} \sum_{(\sigma^{1},\dots, \sigma^{N}) \in
\Sigma_n^{N}} f\qty(\sigma^{1},\dots, \sigma^{N}) \exp(\sum_{i=1}^N H(\sigma^{i})). 
\end{equation*}
In particular, for any event $\mathcal{E}(\sigma^1,\ldots,\sigma^N)$, its
probability under $P(\sigma^1) \times \ldots \times P(\sigma^N)$ is $\langle
\mathbb{I}(\mathcal{E}(\sigma^1,\ldots,\sigma^N)) \rangle$.

$\mathbb{O}(n)$ and $\mathbb{S O}(n)$ are the orthogonal and special orthogonal
groups of $n \times n$ matrices. Haar(·) denotes the Haar-measure on these
groups. $\|\cdot\|_2$ is the $\ell_{2}$-norm for vectors and
$\|\cdot\|_{\mathrm{op}}$ is $\ell_{2} \rightarrow \ell_{2}$ operator norm for
matrices. $\|\cdot\|_{\mathrm{F}}$ is the Frobenius norm for matrices. We use
the conventions that for values $x_{1}, \ldots, x_{k} \in \R$, $\left(x_{1},
\ldots, x_{k}\right) \in \mathbb{R}^{k}$ denotes the column vector containing
these values. For vectors $x_1,\ldots,x_k \in \R^n$, $(x_1,\ldots,x_k) \in \R^{n
\times k}$ denotes the matrix containing these columns.
We will use $C,C',C'',c$ etc.\ to denote positive absolute constants, whose
values may change from instance to instance. The notation ``$\cdot$'' means inner product between two vectors or multiplication of a scalar to each entry of a vector or matrix.

\section{Proof of the main result}\label{sec:main_thm_proof} 

Let $\{m^t\}$ be as defined in (\ref{mtdef}) for the AMP algorithm of
Section \ref{sectionAMP}.
For $N \in \N^+$, let $(\sigma^1,\ldots,\sigma^N) \in \Sigma_n^N$ denote
$N$ i.i.d.\ vectors drawn from the Gibbs distribution $P(\sigma)$.
\Cref{mainthm_informal} rests on two main lemmas, which show that the replica
average $N^{-1}\sum_i \sigma^i$ concentrates around the magnetization
$\langle \sigma \rangle$ for large $N$, and that the AMP iterates
$\{m^t\}$ are close to this replica average for large $t$.
 
\begin{Lemma}\label{concentrationpart}
Fix any $n,N \geq 1$ and any deterministic $O \in \SO(n)$ and $h \in \R^n$
defining the Gibbs distribution $P(\sigma)$. For an absolute constant $C>0$,
some $\beta_0=\beta_0(\mu_D)>0$, and all $\beta\in(0,\beta_0)$,
\begin{equation}\label{replicaconc}
    \expval{\I{\frac{1}{n} \norm{\frac{1}{N}\sum_{i=1}^N \sigma^{i}
-\expval{\sigma} }_2^2<\frac{C}{N} }} > 1-\exp(-n).
\end{equation}
\end{Lemma}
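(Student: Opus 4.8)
The plan is to rephrase \eqref{replicaconc} as a concentration statement for a normalized empirical mean, and then prove it by a net argument powered by a dimension-free log-Sobolev inequality for $P$. Write $m:=\langle\sigma\rangle\in(-1,1)^n$ for the (deterministic, given $O$ and $h$) magnetization, and set
\[
S:=\frac{1}{\sqrt N}\sum_{i=1}^N(\sigma^i-m)\in\R^n, \qquad\text{so that}\qquad \frac1n\Big\|\frac1N\sum_{i=1}^N\sigma^i-\langle\sigma\rangle\Big\|_2^2=\frac{\|S\|_2^2}{nN}.
\]
The event in \eqref{replicaconc} is therefore exactly $\{\|S\|_2^2<Cn\}$, and it suffices to show $\PP_{P^{\otimes N}}(\|S\|_2^2\ge Cn)\le e^{-n}$ for an absolute constant $C$. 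Under $P^{\otimes N}$ the summands $\sigma^i-m$ are i.i.d.\ and centered, with common covariance $\Sigma:=\C_P(\sigma)$, so $\E\|S\|_2^2=\operatorname{tr}\Sigma\le n$ unconditionally; the role of high temperature will be to ensure that $\|S\|_2$ in addition concentrates around this $\sqrt n$ scale at a Gaussian rate, uniformly in $N$.

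First I would invoke the log-Sobolev inequality. Since $\bar J=\beta O^{\top}DO$ has operator norm $\beta\,\normop{D}$, which by \Cref{as} lies below any prescribed threshold once $\beta<\beta_0(\mu_D)$, the results of \cite{bauerschmidt2019very} give that $P$ satisfies a log-Sobolev inequality on $\Sigma_n$ with a constant $\rho$ bounded by an absolute $\rho_0$, uniformly over $n$, $O\in\SO(n)$ and $h\in\R^n$. The Herbst argument then yields dimension-free sub-Gaussian tails for Lipschitz functions: for a unit vector $v\in\R^n$, the map $\sigma\mapsto\langle\sigma,v\rangle$ changes by $2|v_k|$ under a flip of coordinate $k$, so its squared-gradient energy is $\sum_k(2v_k)^2=4$, and
\[
\E_P\exp\!\big(\lambda(\langle\sigma,v\rangle-\langle m,v\rangle)\big)\le\exp\!\big(C_1\rho\lambda^2\big)\qquad\text{for all }\lambda\in\R
\]
with $C_1$ absolute. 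Multiplying this bound over the $N$ independent replicas, $\langle S,v\rangle=\sum_{i=1}^N N^{-1/2}\langle\sigma^i-m,v\rangle$ satisfies the same MGF bound, and Chernoff gives $\PP_{P^{\otimes N}}(\langle S,v\rangle\ge s)\le\exp(-c_1 s^2/\rho)$ with $c_1$ absolute.

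Then I would run the net argument. Fix a $\tfrac12$-net $\mathcal N$ of the Euclidean unit sphere of $\R^n$ with $|\mathcal N|\le 5^n$; a standard fact gives $\|S\|_2\le 2\max_{v\in\mathcal N}\langle S,v\rangle$, so a union bound yields $\PP_{P^{\otimes N}}(\|S\|_2\ge 2s)\le 5^n\exp(-c_1 s^2/\rho)$. Choosing $s^2=C_2\rho n$ with $C_2$ a large enough absolute constant (e.g.\ $c_1C_2\ge 2+\log 5$) makes the right-hand side at most $e^{-n}$, and on the complementary event $\|S\|_2^2<4C_2\rho n$, whence
\[
\frac1n\Big\|\frac1N\sum_{i=1}^N\sigma^i-\langle\sigma\rangle\Big\|_2^2=\frac{\|S\|_2^2}{nN}<\frac{4C_2\rho}{N}\le\frac{4C_2\rho_0}{N}.
\]
Taking $C:=4C_2\rho_0$, which is absolute, would establish $\expval{\I{\frac1n\|\frac1N\sum_{i=1}^N\sigma^i-\langle\sigma\rangle\|_2^2<C/N}}=\PP_{P^{\otimes N}}(\|S\|_2^2<Cn)>1-e^{-n}$, which is \eqref{replicaconc}.

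The one real obstacle is the dimension-free log-Sobolev constant, and this is precisely where high temperature enters: without $\beta<\beta_0(\mu_D)$ the spins may be strongly correlated, $\normop{\Sigma}$ and hence $\E\|S\|_2^2$ can grow with $n$, and the stated bound fails; note also that a plain bounded-differences inequality applied to $\|S\|_2$ itself is not enough, since its coordinate-wise Lipschitz constants only yield fluctuations of the same $\sqrt n$ order as its mean, which is exactly why the supremum over directions has to be extracted by the net and each directional marginal has to be made $O(1)$-sub-Gaussian. Given the LSI, the remaining steps are routine, with two minor points to watch: \Cref{as} bounds $\normop{D}$ only for large $n$, so the finitely many small-$n$ cases should be absorbed into $C$ (a crude Markov bound suffices there, $e^{-n}$ not being small); and the full LSI is needed only for uniformity in $N$, the Poincaré inequality alone already giving the claim in the regime $N=O(n)$ in which the lemma is eventually used.
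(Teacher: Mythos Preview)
Your proof is correct and rests on the same key input as the paper---the dimension-free log-Sobolev inequality for $P$ from \cite{bauerschmidt2019very}---but the route from LSI to the sub-Gaussian MGF is genuinely different. You go straight from LSI via the Herbst argument to obtain $\E_P\exp(\lambda\langle\sigma-m,v\rangle)\le\exp(C_1\rho\lambda^2)$ for unit $v$, tensorize over the independent replicas, and finish with a net on the sphere. The paper instead first passes from LSI to the Poincar\'e inequality (\Cref{Poincare}), and then uses a symmetrization step $\log\langle e^{t^\top Y}\rangle\le N\log\langle e^{N^{-1}t^\top(\sigma^1-\sigma^2)}\rangle$ together with an interpolation $\phi(s)=\log Z_{st/N}+\log Z_{-st/N}$ to reduce to the Lipschitz estimate $\|\langle\sigma\rangle_r-\langle\sigma\rangle_0\|_2\le C'\|r\|_2$, which in turn follows from Poincar\'e applied to linear test functions. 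The paper then closes with the Hanson--Wright style bound of \Cref{subGvershynin} rather than a net, but that final step is essentially equivalent to yours.

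Your path is shorter and more conceptually transparent: Herbst is the standard way to convert LSI into sub-Gaussian concentration, and the net argument is textbook. The paper's detour through Poincar\'e and the tilted-measure interpolation has the mild advantage that only the spectral gap (not the full LSI constant) enters the sub-Gaussian parameter, and the Lipschitz bound $\|\nabla_r\langle\sigma\rangle_r\|_{\mathrm{op}}\le C$ is of some independent interest, but for the purpose of this lemma your argument loses nothing. The caveat you flag about $\normop{D}$ being controlled only for large $n$ is real and is glossed over in the paper's own proof of \Cref{Poincare}; since the lemma is only ever applied for large $n$ (in \Cref{lem:close}), this is harmless either way.
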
 

We remark that \Cref{concentrationpart} holds for any deterministic
$O\in \mathbb{SO}(n)$ and $h\in \R^n$---randomness of $O$ and
\Cref{as}\ref{hsupport} on $h$ will not be assumed in its proof.

\begin{Lemma}\label{Claim1}
Fix any $\delta,\varepsilon>0$ and $N \geq 1$. In the setting of
\Cref{mainthm_informal}, there exists $t_{0}=t_{0}\left(\delta,
\varepsilon, \beta, \mu_{D}\right) \geq 1$ and an absolute constant $C>0$
such that for any fixed $t \geq t_{0}$, almost surely for all large $n$,
\begin{equation*}
\frac{1}{n N} \log \left\langle\mathbb{I}\left(\frac{1}{n}\left\|\frac{1}{N}
\sum_{i=1}^{N} \sigma^i-m^{t}\right\|_{2}^{2} \leq \frac{1}{N}+C
\delta\right)\right\rangle>-\varepsilon.
\end{equation*}
\end{Lemma}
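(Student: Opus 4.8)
### Proof Proposal for Lemma~\ref{Claim1}

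The plan is to lower-bound the log-partition function of the $N$-replica system restricted to the ``orthogonal band'' set $B_N(m^t)$ described in step (iii) of the proof outline, and to show this lower bound matches $N\Psi_{\mathrm{RS}}$ up to $o_t(1)+o(\delta)$ error. Since the full $N$-replica free energy converges to $N\Psi_{\mathrm{RS}}$ (by Theorem~\ref{freeRS} applied to the product measure), a matching lower bound for the restricted sum will force the restricted configurations to carry almost all the Gibbs mass, hence $\langle \mathbb{I}(B_N(m^t))\rangle = \exp(-o(n))$; and on $B_N(m^t)$ the geometric constraints force $\frac{1}{n}\|N^{-1}\sum_i\sigma^i - m^t\|_2^2 \le 1/N + C\delta$ deterministically. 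Concretely I would define $B_N(m^t)$ to be the set of $(\sigma^1,\dots,\sigma^N)$ such that (a) $\frac{1}{n}|(\sigma^i - m^t)^\top m^t| \le \delta$ for each $i$, i.e.\ each replica sits on a thin band orthogonal to $m^t$ through $m^t$, and (b) $\frac{1}{n}|(\sigma^i - m^t)^\top(\sigma^j - m^t)| \le \delta$ for $i \ne j$, i.e.\ the displacements are approximately pairwise orthogonal. A short computation with $\|\sigma^i\|_2^2 = n$ shows that on $B_N(m^t)$ one has $\frac{1}{n}\|\frac{1}{N}\sum_i \sigma^i - m^t\|_2^2 = \frac{1}{nN^2}\sum_{i,j}(\sigma^i-m^t)^\top(\sigma^j-m^t) \le \frac{1}{N}(1 - \frac{1}{n}\|m^t\|_2^2 + \delta) + \frac{N-1}{N}\delta \le \frac{1}{N} + C\delta$, using $\|m^t\|_2^2 \le n$. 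So the geometry is the easy part; the content is the free-energy lower bound.

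For the lower bound, I would run a conditional first-and-second-moment argument on $Z_{B_N(m^t)} := \sum_{(\sigma^i)\in B_N(m^t)} \exp(\sum_i H(\sigma^i))$, conditioning on the sigma-field $\mathcal{G}_t$ generated by the AMP iterates, exactly as in \cite{fan2021replica} but now for $N$ replicas simultaneously and with the extra band/orthogonality indicator. The first moment $\mathbb{E}[Z_{B_N(m^t)}\mid\mathcal{G}_t]$ factors, to leading exponential order, into a product of $N$ single-replica contributions: each replica $\sigma^i$, conditioned on $\mathcal{G}_t$, behaves (via the AMP state evolution of Section~\ref{section:SEandRS} and the relation $m^t = \tanh(h+y^{t-1})$) so that the band constraint $\frac{1}{n}(\sigma^i-m^t)^\top m^t\approx 0$ is typical rather than rare — it carries zero rate — and the per-replica exponent converges to $\Psi_{\mathrm{RS}}$ just as in \eqref{firstm}. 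The mutual-orthogonality constraint among the $N$ displacements, for fixed $N$, likewise carries zero rate (it is a codimension-$\binom{N}{2}$ condition, negligible at scale $n$ for fixed $N$). Hence $\lim_t \lim_n \frac{1}{nN}\log \mathbb{E}[Z_{B_N(m^t)}\mid\mathcal{G}_t] = \Psi_{\mathrm{RS}} - o(\delta)$. For the second moment, one computes $\mathbb{E}[Z_{B_N(m^t)}^2\mid\mathcal{G}_t]$ and shows it matches the square of the first moment to leading order; this reduces to the same overlap variational problem as in \cite{fan2021replica} (\eqref{secondm}), with additional overlap coordinates between the $2N$ replicas, all controlled by the globally concave upper bound on the variational formula that holds at high temperature. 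Paley–Zygmund then gives $Z_{B_N(m^t)} \ge e^{-o(n)}\,\mathbb{E}[Z_{B_N(m^t)}\mid\mathcal{G}_t]$ with probability $\ge e^{-o(n)}$ conditionally on $\mathcal{G}_t$, and combined with the (high-probability over $\mathcal{G}_t$) value of the conditional first moment and Borel–Cantelli over a suitable subsequence this yields $\frac{1}{nN}\log Z_{B_N(m^t)} \ge \Psi_{\mathrm{RS}} - \varepsilon/2$ a.s.\ for all large $n$. Since $\frac{1}{nN}\log Z^N \to \Psi_{\mathrm{RS}}$, subtracting gives $\frac{1}{nN}\log\langle \mathbb{I}(B_N(m^t))\rangle = \frac{1}{nN}\log(Z_{B_N(m^t)}/Z^N) \ge -\varepsilon/2 > -\varepsilon$, and the event $\{\frac{1}{n}\|\frac1N\sum\sigma^i - m^t\|_2^2 \le 1/N + C\delta\}$ contains $B_N(m^t)$, finishing the proof.

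The main obstacle I anticipate is the conditional second-moment computation for the $N$-replica restricted partition function: one must control the joint large-deviations rate function over all pairwise overlaps of the $2N$ replicas (within-pair and cross-pair), conditioned on $\mathcal{G}_t$, and verify that the band and orthogonality indicators plus the AMP conditioning do not create an entropic instability — i.e.\ that the optimizer of the variational problem remains the ``replica-symmetric'' diagonal point where all cross-overlaps vanish. This is where high temperature enters decisively: one needs the variational objective to admit a globally concave majorant so that the first and second moments match. Extending the single-replica (or two-replica) analysis of \cite{fan2021replica} to $2N$ replicas is conceptually routine but bookkeeping-heavy, and care is needed to ensure the error terms are uniform in the fixed parameters $N,\delta,\varepsilon$ in the right order of limits ($n\to\infty$, then $t\to\infty$, with $N,\delta,\varepsilon$ fixed throughout). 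A secondary technical point is transferring the conditional statement (which holds with probability $e^{-o(n)}$ over $\mathcal{G}_t$ for each $n$) into an almost-sure statement for all large $n$; this requires either a concentration/boosting step showing the conditional first and second moments themselves concentrate, or working along a deterministic subsequence and interpolating — the companion concentration tools (log-Sobolev inequality from \cite{bauerschmidt2019very}, used in Lemma~\ref{concentrationpart}) should make this manageable.
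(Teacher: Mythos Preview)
Your overall strategy---show that the restricted partition function $Z_{B_N(m^t)}$ matches $Z^N$ to leading exponential order, then use the geometric inclusion---is correct and is exactly what the paper does. The geometric computation you give is essentially Lemma~\ref{Claim2}. However, your proposed route to the free-energy lower bound is substantially harder than the paper's, and contains a gap.

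The paper does \emph{not} perform a direct conditional first/second moment analysis of $Z_{B_N(m^t)}$ for $N$ replicas. Instead it uses an inclusion--exclusion reduction: writing $B_N = \mathrm{Band}(m^t,\delta)^N \setminus \bigcup_{i\neq j} B_c(i,j)$, one has
\[
Z_{B_N} \;\ge\; Z_B^N \;-\; (N^2-N)\, Z_c\, Z_B^{N-2},
\]
where $Z_B$ is the single-replica band partition function and $Z_c$ is the two-replica partition function restricted to pairs with large overlap. This reduces everything to the one-replica result $\frac{1}{n}\log Z_B \to \Psi_{\mathrm{RS}}$ (Lemma~\ref{Claim6}) and the two-replica strict upper bound $\frac{1}{n}\log Z_c \le 2\Psi_{\mathrm{RS}} - c\beta^{1/2}\eta^2$ (Lemma~\ref{Claim3}). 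No $N$-replica or $2N$-replica variational analysis is needed. The strict gap $-c\beta^{1/2}\eta^2$ is what makes the union bound over $O(N^2)$ pairs negligible.

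Your claim that $\mathbb{E}[Z_{B_N(m^t)}\mid\mathcal{G}_t]$ ``factors, to leading exponential order, into a product of $N$ single-replica contributions'' is not justified: the conditional expectation is over $O$, and $\exp(\sum_i \tfrac12 (\sigma^i)^\top O^\top \bar D O \sigma^i)$ does not decouple across replicas. Computing this would already require an $N$-replica large-deviations analysis with $\binom{N}{2}$ overlap parameters. The paper's inclusion--exclusion trick neatly sidesteps this by starting from the genuine product $Z_B^N$.

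Finally, your concentration remedy is the wrong tool: the log-Sobolev inequality of \cite{bauerschmidt2019very} (used in Lemma~\ref{concentrationpart}) controls fluctuations \emph{under the Gibbs measure}, whereas here one needs concentration of $\log Z_B$ and $\log Z_c$ \emph{over the Haar randomness} conditional on $\mathcal{G}_t$. The paper obtains this via Gromov's concentration inequality on $\mathbb{SO}(n-2t)$ (Lemmas~\ref{Claim9} and \ref{Claim11}), which combined with Paley--Zygmund on the conditional moments gives the almost-sure statement directly without any subsequence argument.
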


The proofs of these lemmas will occupy the remaining sections of this paper.
Here, let us first use these lemmas to conclude the proof of the main result
Theorem~\ref{mainthm_informal}, by showing
the statements \eqref{eq:mtsatisfiesTAP} and \eqref{eq:mtconvergestomag}.

\begin{Corollary}\label{lem:close} 
	In the setting of \Cref{mainthm_informal}, for any $\varepsilon>0$, there exists $t_0=t_0(\varepsilon,\beta,\mu_D)\ge 1$ such that for any fixed $t\ge t_0$, almost surely
		\[\limsup _{n \rightarrow \infty}
\frac{1}{n}\left\|\langle\sigma\rangle-m^{t}\right\|_{2}^{2}<\varepsilon.\]
\end{Corollary}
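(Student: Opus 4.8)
The plan is to combine the two main lemmas. Lemma \ref{concentrationpart} says that, under the $N$-replica Gibbs measure $P^N$, the replica average $\frac1N\sum_i\sigma^i$ is within $\ell_2$-distance $\sqrt{Cn/N}$ of $\langle\sigma\rangle$ with probability at least $1-e^{-n}$. Lemma \ref{Claim1} says that, for $t$ large, the event that $\frac1N\sum_i\sigma^i$ is within $\ell_2$-distance $\sqrt{n(1/N+C\delta)}$ of $m^t$ has $P^N$-probability at least $e^{-nN\varepsilon'}$ for any prescribed $\varepsilon'>0$. For $N$ fixed and $n$ large, $e^{-nN\varepsilon'}$ eventually exceeds $e^{-n}$ once $\varepsilon'<1/N$ — more simply, the two probability lower bounds $1-e^{-n}$ and $e^{-nN\varepsilon'}>0$ force the two events to have nonempty intersection. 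So there exists at least one configuration $(\sigma^1,\dots,\sigma^N)$ in both events simultaneously, and for that configuration the triangle inequality gives
\[
\tfrac1{\sqrt n}\|\langle\sigma\rangle-m^t\|_2 \le \sqrt{\tfrac{C}{N}}+\sqrt{\tfrac1N+C\delta}.
\]
Crucially, the left-hand side does not depend on $(\sigma^1,\dots,\sigma^N)$ at all, so this inequality holds deterministically (on the almost-sure event where both lemmas apply).

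Next I would chase the quantifiers. Given a target $\varepsilon>0$, first choose $N$ large enough that $(\sqrt{C/N}+\sqrt{1/N})^2 < \varepsilon/2$, then choose $\delta>0$ small enough that the extra $C\delta$ contribution keeps the squared bound below $\varepsilon$ (concretely, so that $(\sqrt{C/N}+\sqrt{1/N+C\delta})^2<\varepsilon$). With $N$ and $\delta$ now fixed, apply Lemma \ref{Claim1} with these $\delta$ and with any $\varepsilon'\in(0,\infty)$ — say $\varepsilon'=1$ — to obtain $t_0=t_0(\delta,\varepsilon',\beta,\mu_D)=t_0(\varepsilon,\beta,\mu_D)$. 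For any fixed $t\ge t_0$: almost surely for all large $n$, the event in Lemma \ref{Claim1} has positive $P^N$-probability, and (for all $n$, all $O\in\SO(n)$) the event in Lemma \ref{concentrationpart} has probability $>1-e^{-n}>0$; hence both hold for some common configuration, yielding $\tfrac1n\|\langle\sigma\rangle-m^t\|_2^2<\varepsilon$ for all large $n$, and therefore $\limsup_{n\to\infty}\tfrac1n\|\langle\sigma\rangle-m^t\|_2^2\le\varepsilon$. Replacing $\varepsilon$ by $\varepsilon/2$ at the outset upgrades $\le\varepsilon$ to $<\varepsilon$ if a strict bound is wanted.

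There is essentially no analytic obstacle here — all the work has been offloaded into Lemmas \ref{concentrationpart} and \ref{Claim1}. The only points requiring a little care are bookkeeping: (1) making sure the "intersection is nonempty" argument is valid, which just needs both probability lower bounds to be positive for the relevant range of $n$ (true, since $1-e^{-n}>0$ always and the Lemma \ref{Claim1} bound is $e^{-nN\varepsilon'}>0$); (2) the order of the limits/quantifiers — $N$ and $\delta$ must be frozen \emph{before} invoking Lemma \ref{Claim1} to extract $t_0$, and one should note that $t_0$ from Lemma \ref{Claim1} a priori depends on $N$, which is fine because $N$ is itself a function of $\varepsilon$; (3) the almost-sure qualifier — the exceptional null set comes only from Lemma \ref{Claim1} (Lemma \ref{concentrationpart} is deterministic in $O$), so the conclusion is "almost surely, for any fixed $t\ge t_0$." I would also record, as in step (iv) of the proof outline, the observation that because $\{\langle\sigma\rangle,m^t\}$ are functions of $O$ and $h$ only (not of the replicas), a statement "with positive $P^N$-probability, $\tfrac1n\|\langle\sigma\rangle-m^t\|_2^2<\varepsilon$" is equivalent to the deterministic statement $\tfrac1n\|\langle\sigma\rangle-m^t\|_2^2<\varepsilon$.
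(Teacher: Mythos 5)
Your overall strategy is the same as the paper's (triangle inequality on the replica average, combine Lemmas \ref{concentrationpart} and \ref{Claim1}, and use that $\langle\sigma\rangle$ and $m^t$ do not depend on the replicas), but there is a genuine error in the step that produces a common configuration. You assert that the lower bounds $\langle\mathbb{I}(A)\rangle>1-e^{-n}$ and $\langle\mathbb{I}(B)\rangle>e^{-nN\varepsilon'}>0$ ``force the two events to have nonempty intersection,'' and you then actually carry out the proof with $\varepsilon'=1$. That inference is false: two events can both have positive probability and still be disjoint. The only way to conclude $A\cap B\neq\emptyset$ from these one-sided bounds is via inclusion--exclusion, i.e.\ $\langle\mathbb{I}(A\cap B)\rangle\geq \langle\mathbb{I}(A)\rangle+\langle\mathbb{I}(B)\rangle-1$, which is positive only if the lower bound on $\langle\mathbb{I}(B)\rangle$ is at least as large as the failure probability $e^{-n}$ of $A$. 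With $\varepsilon'=1$ and $N>1$ you get $e^{-nN}<e^{-n}$, so the sum of the two bounds is strictly less than $1$ and nothing follows.

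The fix is exactly the observation you make and then set aside: choose $\varepsilon'\leq 1/N$, so that $\exp(-nN\varepsilon')\geq\exp(-n)$ and the two bounds sum to more than $1$. This is also what the paper does, though with a slightly different bookkeeping: it takes $N=N_\varepsilon=\lfloor 1/\varepsilon\rfloor$ and applies Lemma \ref{Claim1} with the same $\varepsilon$ in the exponent, using $N_\varepsilon\varepsilon\leq 1$ to get $\exp(-nN_\varepsilon\varepsilon)\geq\exp(-n)$. Everything else in your proposal — the triangle inequality, the observation that the conclusion is deterministic because $\langle\sigma\rangle$ and $m^t$ do not depend on the replicas, and the quantifier ordering ($N$ and $\delta$ fixed before $t_0$ is extracted) — is correct and matches the paper. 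So this is a one-line repair: replace ``say $\varepsilon'=1$'' with ``choose $\varepsilon'\leq 1/N$'' and drop the ``more simply'' parenthetical.
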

\begin{proof}
Fix any $\varepsilon\in\qty(0,\frac{1}{2})$. We apply Lemmas
\ref{concentrationpart} and \ref{Claim1} with $N=N_\varepsilon:=
\left \lfloor \frac{1}{\varepsilon} \right \rfloor$ and $\delta=\varepsilon$.
Note that
\begin{equation}\label{somsom}
1-\varepsilon \leq N_\varepsilon \varepsilon \leq 1 \quad \text { and } \quad
\frac{1}{N_\varepsilon} \leq \frac{\varepsilon}{1-\varepsilon}
<2\varepsilon.
\end{equation}
From Lemmas \ref{concentrationpart} and \ref{Claim1},
choosing $N=N_\varepsilon$ and $\delta=\varepsilon$, there exists some $t_{0}=t_{0}\left(\varepsilon, \beta, \mu_{D}\right) \geq 1$ such that for any fixed $t \geq t_{0}$, almost surely for all sufficiently large $n$,
\begin{equation*}
\begin{aligned}
\expval{\mathbb{I}\left(\frac{1}{n}\left\|\frac{1}{N_\varepsilon}
\sum_{i=1}^{N_\varepsilon} \sigma^{i}-m^{t}\right\|_{2}^{2} \leq
\frac{1}{N_\varepsilon}+C \varepsilon \right)} &\geq \exp (-n N_\varepsilon
\varepsilon) \geq \exp (-n), \\
\expval{\mathbb{I}\left(\frac{1}{n}\left\|\frac{1}{N_\varepsilon}
\sum_{i=1}^{N_\varepsilon} \sigma^{i}-\langle\sigma\rangle\right\|_2^{2} \leq
\frac{C}{N_\varepsilon}\right)}&>1-\exp (-n).
\end{aligned}
\end{equation*}
Together, these imply that there exists $(\sigma^1,\cdots,
\sigma^{N_\varepsilon}) \in \Sigma_n^{N_\varepsilon}$ for which
\begin{equation}\label{theyimp}
\begin{aligned}
\frac{1}{\sqrt{n}}\left\|\langle\sigma\rangle-m^{t}\right\|_{2} &\leq \frac{1}{\sqrt{n}}\left\|\frac{1}{N_{\varepsilon}} \sum_{i=1}^{N_{\varepsilon}} \sigma^{i}-m^{t}\right\|_{2}+\frac{1}{\sqrt{n}}\left\|\langle\sigma\rangle-\frac{1}{N_{\varepsilon}} \sum_{i=1}^{N_{\varepsilon}} \sigma^{i}\right\|_{2} \\
&\leq \sqrt{\frac{1}{N_{\varepsilon}}+C
\varepsilon}+\sqrt{\frac{C}{N_{\varepsilon}}} \leq \sqrt{2\varepsilon+C
\varepsilon}+\sqrt{2C\varepsilon} <C' \sqrt{\varepsilon}.
\end{aligned}
\end{equation}
Here $C'>0$ is an absolute constant and $\varepsilon \in (0,\frac{1}{2})$ is
arbitrary, implying the corollary.
\end{proof}

\begin{Corollary}\label{onemorecor}
In the setting of \Cref{mainthm_informal},
\begin{equation*}
    \lim_{t\to \infty} \lim_{n\to \infty} \frac{1}{n}\norm{m^t-\tanh \qty(h+\bar{J}m^t-\bar{R}(1-q_*)m^t)}_2^2=0.
\end{equation*} where the inner limit exists in an almost-sure sense. 
\end{Corollary}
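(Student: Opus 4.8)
The plan is to derive this directly from the state evolution of the AMP algorithm \eqref{AMP} (as recorded in Section~\ref{section:SEandRS}, following \cite{fan2020approximate}), after a short \emph{exact} algebraic manipulation that converts the TAP residual of $m^t$ into the increment $y^t-y^{t-1}$. Recall from \eqref{mtdef} that $m^t=\tanh(h+y^{t-1})$. Since $\tanh$ acts coordinatewise and is $1$-Lipschitz,
\[
\frac{1}{n}\norm{m^t-\tanh\qty(h+\bar{J}m^t-\bar{R}(1-q_*)m^t)}_2^2 \le \frac{1}{n}\norm{y^{t-1}-\bar{J}m^t+\bar{R}(1-q_*)m^t}_2^2 .
\]
Using $\lambda_*=\bar{R}(1-q_*)+\tfrac{1}{1-q_*}$ and $\tfrac{1}{1-q_*}m^t=x^t+y^{t-1}$ (from \eqref{mtdef}), a direct computation gives $y^{t-1}-\bar{J}m^t+\bar{R}(1-q_*)m^t=(\lambda_*I-\bar{J})m^t-x^t$. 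Since $(\Gamma+I)^{-1}=(1-q_*)(\lambda_*I-\bar{J})$ by \eqref{defGamma}, and since $y^t=O^\top\Lambda s^t=O^\top\Lambda Ox^t=\Gamma x^t$ directly from \eqref{AMP}, this equals
\[
(\Gamma+I)^{-1}(x^t+y^{t-1})-x^t=(\Gamma+I)^{-1}\qty(y^{t-1}-\Gamma x^t)=(\Gamma+I)^{-1}\qty(y^{t-1}-y^t).
\]
Hence $\tfrac1n\norm{m^t-\tanh(h+\bar{J}m^t-\bar{R}(1-q_*)m^t)}_2^2\le \normop{(\Gamma+I)^{-1}}^2\cdot\tfrac1n\norm{y^{t-1}-y^t}_2^2$.

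Next I would bound the prefactor uniformly in $n$: because $\bar{G}(\lambda_*)=1-q_*>0$ forces $\lambda_*\in(\bar{d}_+,\infty)$, and because the eigenvalues $\bar{d}_i$ lie in a fixed compact set for all large $n$ by \Cref{as}\ref{Dsupport}, we get $\normop{(\Gamma+I)^{-1}}=(1-q_*)\max_i|\lambda_*-\bar{d}_i|\le C$ for some $C=C(\mu_D,\beta)$ and all large $n$. It then remains to show
\[
\lim_{t\to\infty}\lim_{n\to\infty}\frac{1}{n}\norm{y^t-y^{t-1}}_2^2=0,
\]
with the inner limit existing almost surely. Writing $\tfrac1n\norm{y^t-y^{t-1}}_2^2=\tfrac1n\norm{y^t}_2^2-\tfrac2n\langle y^t,y^{t-1}\rangle+\tfrac1n\norm{y^{t-1}}_2^2$, the state evolution for \eqref{AMP} guarantees that each of these normalized inner products converges almost surely as $n\to\infty$ to a deterministic limit governed by the state-evolution recursion; and for $\beta<\beta_0(\mu_D)$ this recursion converges to its unique fixed point, so that the limits of $\tfrac1n\norm{y^t}_2^2$, $\tfrac1n\norm{y^{t-1}}_2^2$ and $\tfrac1n\langle y^t,y^{t-1}\rangle$ all tend to $\sigma_*^2$ (from \eqref{fixedpointbar}) as $t\to\infty$. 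Combining with the bound above proves the corollary.

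The main obstacle is the convergence of the state-evolution recursion as $t\to\infty$---equivalently, that the AMP iterates $y^t$ stabilize and consecutive iterates become asymptotically perfectly correlated. This is exactly the step where the high-temperature assumption $\beta<\beta_0(\mu_D)$ enters (the same smallness that yields uniqueness of the fixed point \eqref{fixedpointbar} and the contraction estimates underlying \cite{fan2021replica}), and I would import it from the state-evolution analysis in Section~\ref{section:SEandRS}. Everything else is the elementary exact identity above together with the uniform operator-norm bound on $(\Gamma+I)^{-1}$.
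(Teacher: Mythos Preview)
Your proof is correct and follows essentially the same approach as the paper: both reduce the TAP residual, via the $1$-Lipschitz property of $\tanh$ and the identity $(\Gamma+I)^{-1}=(1-q_*)(\lambda_*I-\bar{J})$, to a constant times $\tfrac{1}{n}\|y^{t}-y^{t-1}\|_2^2$, and then invoke the state-evolution convergence \eqref{ytysSE}. The algebraic routes differ only cosmetically---the paper rewrites the argument of the second $\tanh$ as $h+y^t+\text{(correction)}$ before applying Lipschitz, whereas you apply Lipschitz first and then simplify---but both yield exactly $(\Gamma+I)^{-1}(y^{t-1}-y^t)$ (the paper's matrix $(1-q_*)(\bar{J}-\bar{R}(1-q_*)I-\tfrac{1}{1-q_*}I)$ is $-(\Gamma+I)^{-1}$).
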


\begin{proof}
Expanding the definition \eqref{defGamma} of $\Gamma$, we can verify that
\[(\Gamma+I)^{-1}\Gamma=(1-q_*)\cdot (\bar{J}-\bar{R}(1-q_*)I).\]
From \eqref{AMP}, we have that
\[y^t=\frac{1}{1-q_*} \Gamma \tanh(h+y^{t-1})-\Gamma y^{t-1}.\]
Applying \eqref{mtdef} and the above two statements, we then have
\begin{equation}\label{38gut}
\begin{aligned}
\tanh(h+\bar{J}m^t-\bar{R}(1-q_*)m^t) 
=\tanh \qty(h+y^t+(1-q_*)(\bar{J}-\bar{R}(1-q_*)I)(y^{t-1}-y^t)).
\end{aligned}
\end{equation} Then,
\begin{equation*}
\begin{aligned}
\frac{1}{n}&\norm{m^t-\tanh \qty(h+\bar{J}m^t-\bar{R}(1-q_*)m^t)}_2^2\\
&=\frac{1}{n}\big\|\tanh(h+y^{t-1})-\tanh \qty(h+y^t+(1-q_*)(\bar{J}-\bar{R}(1-q_*)I)(y^{t-1}-y^t))\big \|_2^2\\
& \le \frac{1}{n} \norm{(1-q_*)\qty(\bar{J}-\bar{R}(1-q_*)I-\frac{1}{1-q_*}I)(y^{t-1}-y^t)}_2^2\\
& \le \frac{1}{n} \qty(\normop{\bar{D}}+|\bar{R}(1-q_*)|+1)\cdot \norm{y^{t-1}-y^t}_2^2
\end{aligned}
\end{equation*}
where we used \eqref{38gut} and the 1-Lipschitz property of $\tanh(\cdot)$.
Recall that $\bar{D}=\beta D$ from \eqref{barnotation}. Using
\Cref{as}\ref{Dsupport} and the convergence of the AMP algorithm shown
in \eqref{ytysSE} of Appendix \ref{section:SEandRS}, we complete the proof. 
\end{proof}

\begin{proof}[Proof of Theorem~\ref{mainthm_informal}]
For any $t\ge 1$ and $n\ge 1$,
\begin{equation}\label{tiq}
\begin{aligned}
&\frac{1}{\sqrt{n}} \|\langle\sigma\rangle-\tanh \left(h+\bar{J}\langle\sigma\rangle-\bar{R}\left(1-q_{*}\right)\langle\sigma\rangle\right) \|_{2} \\
& \leq \frac{1}{\sqrt{n}}\left\|\langle\sigma\rangle-m^{t}\right\|_{2} +\frac{1}{\sqrt{n}}\left\|m^{t}-\tanh \left(h+\bar{J} m^{t}-\bar{R}\left(1-q_{*}\right) m^{t}\right)\right\|_{2} \\
&\qquad+\frac{1}{\sqrt{n}} \| \tanh \left(h+\bar{J}(\sigma\rangle-\bar{R}\left(1-q_{*}\right)\langle\sigma\rangle\right) -\tanh \left(h+\bar{J} m^{t}-\bar{R}\left(1-q_{*}\right) m^{t}\right) \|_{2} \\
& \leq \frac{1}{\sqrt{n}}\left\|m^{t}-\tanh \left(h+\bar{J}
m^{t}-\bar{R}\left(1-q_{*}\right) m^{t}\right)\right\|_{2}
+\frac{1}{\sqrt{n}}\left(\|\bar{D}\|_{\mathrm{op}}+\left|\bar{R}\left(1-q_{*}\right)\right|+1\right)\left\|\langle\sigma\rangle-m^{t}\right\|_{2}.
\end{aligned}
\end{equation}

Recall $\bar{D}=\beta D$, and apply \Cref{as}\ref{Dsupport}. Then,
in the setting of \Cref{mainthm_informal}, for any $\varepsilon>0$ and for some
$t \geq t_0(\varepsilon,\beta,\mu_D)$ and a constant $C(\beta,\mu_D)>0$,
applying \Cref{lem:close} and \Cref{onemorecor} above yields
\[\limsup_{n \rightarrow \infty} \frac{1}{n}\left\|\langle\sigma\rangle-\tanh
\left(h+\bar{J}\langle\sigma\rangle-\bar{R}\left(1-q_{*}\right)\langle\sigma\rangle\right)\right\|_{2}^{2}
\leq C(\beta,\mu_D)\,\varepsilon\]
This shows (\ref{eq:tap_rescaled}), which is equivalent to
\Cref{mainthm_informal} via the reparametrization of (\ref{barnotation}).
\end{proof}

The proof of \Cref{mainthm_informal} is thus completed by showing Lemmas
\ref{concentrationpart} and \ref{Claim1}. We prove \Cref{concentrationpart}
in Section \ref{sec:concentration}, and \Cref{Claim1} in
Sections \ref{sec:band}, \ref{sec:orthogonal}, and \ref{sec:mainlemma}.

\section{Concentration of the replica average}\label{sec:concentration}

In this section, we show \Cref{concentrationpart}.

For any function $\varphi:\Sigma_n \to \R$ and
all $i\in [n]$, define the discrete gradient
\begin{equation*}
    \nabla_{\sigma_i}
\varphi(\sigma):=\varphi(\sigma_1,\ldots,\sigma_{i-1},+1,\sigma_{i+1},\ldots,\sigma_n)-\varphi(\sigma_1,\ldots,\sigma_{i-1},-1,\sigma_{i+1},\ldots,\sigma_n).
\end{equation*}
\Cref{concentrationpart} will be a consequence of the following Poincar\'e
inequality for the high-temperature Gibbs distribution,
implied by the results of \cite{bauerschmidt2019very}.

\begin{Lemma}\label{Poincare}
In the setting of \Cref{concentrationpart},
for an absolute constant $C>0$ and
any function $\varphi:\Sigma_n \mapsto \mathbb{R}$,
\begin{equation}\label{eq:poincare}
    \expval{\varphi^2}-\expval{\varphi}^2\le C\sum_{i=1}^n
\expval{|\nabla_{\sigma_i} \varphi(\sigma)|^2}.
\end{equation}
\end{Lemma}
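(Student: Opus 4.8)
The plan is to invoke the log-Sobolev inequality for the high-temperature Gibbs measure established in \cite{bauerschmidt2019very} and deduce the (weaker) Poincar\'e inequality \eqref{eq:poincare} from it, with the discrete gradient $\nabla_{\sigma_i}$ playing the role of the relevant Dirichlet form. Concretely, the results of \cite{bauerschmidt2019very} show that for $\beta$ below a threshold $\beta_0(\mu_D)$ depending only on the spectral law, the measure $P(\sigma)$ on $\{\pm1\}^n$ satisfies a log-Sobolev inequality with a constant that is $O(1)$ uniformly in $n$ (and uniformly over the disorder $O$, since their criterion is a spectral-norm condition on $\bar J=\beta J$ that holds deterministically once $\|\bar D\|_{\mathrm{op}}$ is controlled). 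Since a log-Sobolev inequality always implies a Poincar\'e inequality with the same (or a comparable) constant for the same Dirichlet form, it suffices to identify the Dirichlet form appearing in their statement with $\sum_i \langle |\nabla_{\sigma_i}\varphi|^2\rangle$ up to an absolute constant.

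First I would state precisely the functional inequality from \cite{bauerschmidt2019very}: for the Glauber-type Dirichlet form $\mathcal E(\varphi)=\sum_{i=1}^n \langle (\varphi(\sigma)-\varphi(\sigma^{(i)}))^2 \rangle$, where $\sigma^{(i)}$ denotes $\sigma$ with the $i$-th coordinate resampled from its conditional law, one has $\operatorname{Ent}(\varphi^2)\le C\,\mathcal E(\varphi)$ for $\beta<\beta_0$. Linearizing (applying this to $1+\e\varphi$ and letting $\e\to0$, or just citing the standard implication) yields the Poincar\'e inequality $\langle\varphi^2\rangle-\langle\varphi\rangle^2\le C\,\mathcal E(\varphi)$. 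Second I would compare $\mathcal E$ with the quantity on the right-hand side of \eqref{eq:poincare}. For a single spin flip, $\varphi(\sigma)-\varphi(\sigma^{(i)})$ is nonzero only when the resampled value differs from $\sigma_i$, in which case it equals $\pm\nabla_{\sigma_i}\varphi(\sigma)$; taking the conditional expectation over the resampled coordinate contributes a factor that is the conditional probability of flipping, which lies in $(0,1)$, so $\langle (\varphi(\sigma)-\varphi(\sigma^{(i)}))^2\rangle \le \langle |\nabla_{\sigma_i}\varphi(\sigma)|^2\rangle$ (and conversely the two are comparable up to the spectral gap of the single-site chain, but only the upper bound is needed). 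Summing over $i$ gives $\mathcal E(\varphi)\le \sum_i \langle|\nabla_{\sigma_i}\varphi|^2\rangle$, which combined with the Poincar\'e inequality above yields \eqref{eq:poincare}.

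The main point requiring care is checking that the hypotheses of the log-Sobolev result of \cite{bauerschmidt2019very} are met for \emph{every} deterministic $O\in\SO(n)$ and $h\in\R^n$, as asserted in \Cref{concentrationpart}; this is why the statement does not need randomness of $O$ or \Cref{as}\ref{hsupport}. Their criterion is a smallness condition on the interaction matrix --- something of the form $\|\bar J\|_{\mathrm{op}}<1$ or a related Hubbard--Stratonovich/convexity condition --- which depends on $\bar J=\beta J=\beta O^\top D O$ only through $\|\beta D\|_{\mathrm{op}}$, and is therefore satisfied uniformly once $\beta<\beta_0(\mu_D)$ is small enough given the (asymptotically) bounded support of $\mu_D$; the external field $h$ only tilts the measure and does not affect the relevant constant. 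I would also note that one must take $\beta_0$ possibly smaller than in earlier parts of the paper, but still depending only on $\mu_D$, which is harmless. With this verification in place, the two displayed comparisons above are routine, and the lemma follows.
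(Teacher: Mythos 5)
Your proposal follows essentially the same route as the paper: invoke the log-Sobolev inequality of Bauerschmidt--Bodineau for the high-temperature Gibbs measure, then pass to the Poincar\'e inequality by the standard linearization. The extra step you introduce comparing the Glauber resampling Dirichlet form with $\sum_i\langle|\nabla_{\sigma_i}\varphi|^2\rangle$ is harmless but unnecessary in the paper's version, since the form of the log-Sobolev inequality the paper cites already has the discrete gradient on the right side. The one detail worth making precise, which you gesture at but do not spell out, is that $\bar J$ is not positive definite as required by \cite[Theorem~1]{bauerschmidt2019very}; the paper handles this by replacing $\bar J$ with $\bar J-\min(d_1,\dots,d_n)I$ (which changes $H(\sigma)$ only by a constant on $\Sigma_n$ and hence leaves the Gibbs measure unchanged), and then the relevant smallness condition becomes $\beta_0\cdot\{\max(d_i)-\min(d_i)\}<1/2$, which \Cref{as}\ref{Dsupport} guarantees uniformly in $n$ for small enough $\beta_0(\mu_D)$.
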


\begin{proof}
By \Cref{as}\ref{Dsupport}, there exists some $\beta_0(\mu_D)>0$ not depending
on $n$ such that for any $n \geq 1$,
\begin{equation}\label{smallbforop}
    \beta_0 \cdot \qty{\max(d_1,\dots, d_n)-\min(d_1,\dots, d_n)}<1/2.
\end{equation} Fix any $\beta\in(0,\beta_0)$. We can apply Theorem 1 and Remark
(ii) of \cite{bauerschmidt2019very} to conclude that for any function
$\varphi:\Sigma_n \mapsto \R$, the Gibbs distribution \eqref{Gibbsbar} satisfies
the logarithmic Sobolev inequality
\begin{equation*}
    \expval{\varphi^2 \log(\varphi^2)}-\expval{\varphi^2} \log(\expval{\varphi^2})\le C\sum_{i=1}^n \expval{|\nabla_{\sigma_i} \varphi(\sigma)|^2}
\end{equation*} where $C>0$ is an absolute constant. We remark here that
although our couplings matrix $\bar{J}$ is not necessarily positive definite as
required for \cite[Theorem~1]{bauerschmidt2019very}, the theorem can nonetheless
be applied to $\bar{J}-\min(d_1,\dots,d_n) \cdot I$ for which the operator norm is less than $1/2$ by \eqref{smallbforop}. See the paragraph following \cite[Theorem~1]{bauerschmidt2019very} for more details.

Then by a standard argument (see e.g.
\cite[Proposition~2.1]{ledoux1999concentration}), this implies the
Poincar\'e-type inequality \eqref{eq:poincare} for a different absolute
constant $C>0$.
\end{proof}

\begin{proof}[Proof of \Cref{concentrationpart}]
Define
$Y=N^{-1}\sum_{i=1}^{N}\sigma^i-\langle\sigma\rangle$.
In light of \Cref{subGvershynin}, we will show that $Y$
satisfies the sub-Gaussian condition, for an absolute constant $C>0$
and any $t \in \R^n$,
\begin{equation}\label{cgfY}
\log \expval{\exp(t^\top Y)} \le \frac{C \norm{t}_2^2}{N}.
\end{equation}

Applying the i.i.d.\ property of the replicas and Jensen's inequality,
\begin{align*}
\log\expval{\exp(t^\top Y)} &= \log\expval{\exp( \frac{1}{N}
\sum_{k=1}^{N}\left( t^\top \sigma^{k}-t^\top \langle\sigma\rangle \right))} \\
&= N\log\expval{\exp( \frac{1}{N} \left( t^\top \sigma^{1}-t^\top
\langle\sigma\rangle\right))} \leq N\log\expval{\exp( \frac{1}{N} \left( t^\top
\sigma^{1}-t^\top \sigma^{2}\right))}.
\end{align*}
Then, unpacking the definition of the Gibbs average $\langle \cdot \rangle$,
\begin{align*}
\log\expval{\exp(t^\top Y)} 
&\leq N \log \bigg(\sum_{(\sigma^{1}, \sigma^{2}) \in \Sigma_n^{ 2}} \exp
\bigg(H\qty(\sigma^{1})+H\qty(\sigma^{2}) +\frac{1}{N} t^\top
\qty(\sigma^{1}-\sigma^{2})\bigg)\bigg)\\
&\hspace{1in}-N \log \left(\sum_{(\sigma^{1}, \sigma^{2}) \in \Sigma_n^{ 2}} \exp
\left(H\left(\sigma^{1}\right)+H\left(\sigma^{2}\right)\right)\right).
\end{align*}
Define the function $\phi:[0,1]\mapsto \R$ by
\begin{equation*}
    \phi(s):=\log \sum_{(\sigma^{1}, \sigma^{2}) \in \Sigma_n^{2}} \exp
\left(H\left(\sigma^{1}\right)+H\left(\sigma^{2}\right)+\frac{s}{N} t^\top
\left(\sigma^{1}-\sigma^{2}\right)\right),
\end{equation*}
so that the above shows $\log \langle \exp(t^\top Y) \rangle \leq
N(\phi(1)-\phi(0))$.

Now for any $r\in \R^n$, define
\begin{equation}\label{Gibbshs}
P_{r}(\sigma):=\frac{1}{Z_{r}} \exp(H(\sigma)+ r^\top \sigma),
\qquad Z_{r}:=\sum_{\sigma\in \Sigma_n} \exp(H(\sigma)+ r^\top \sigma)
\end{equation}
which is just \eqref{Gibbsbar} with external field modified from $h$ to $h+ r$.
Let $\expval{\cdot}_{r}$ denote the mean with respect to $P_r(\sigma)$.
By the chain rule and the definition of $\expval{\cdot}_{r}$, it can be easily checked that for any $t\in \R^n$ and $s\in [0,1]$, 
\begin{equation*}
    \dv{\phi}{s}=\frac{1}{N} t^\top \qty(\expval{\sigma}_{st/N}-\expval{\sigma}_{-st/N}).
\end{equation*}
Combining with the above,
\begin{align*}
\log \left\langle\exp \left(t^{\top} Y\right)\right\rangle
\le \sup_{s\in [0,1]} N \qty|\dv{\phi}{s}|
&\le  \sup_{s\in [0,1]}\norm{t}_2 \cdot
\norm{\expval{\sigma}_{st/N}-\expval{\sigma}_{-st/N}}_2 \\ &\le \sup_{s\in
[0,1]} \norm{t}_2 \norm{\expval{\sigma}_{st/N}-\expval{\sigma}_{0}}_2
+\norm{t}_2 \norm{\expval{\sigma}_{-st/N}-\expval{\sigma}_{0}}_2.
\end{align*}
Thus, to show \eqref{cgfY}, it suffices to show that for any $r\in \R^n$ and an
absolute constant $C'>0$,
\begin{equation}\label{wanttoshow}
    \norm{\expval{\sigma}_{r}-\expval{\sigma}_0}_2\le C' \norm{r}_2.
\end{equation}

For this, observe that differentiating $\log Z_r$ yields
\begin{equation*}
    \nabla_r \expval{\sigma}_r=\nabla_r^2 \log Z_r=\expval{\sigma \sigma^\top}_r-\expval{\sigma}_r \expval{\sigma }^\top_r
\end{equation*}
Fix an arbitrary unit vector $t\in \R^n$, and define $\varphi_t(\sigma)=t^\top \sigma$. Using \Cref{Poincare},
\[t^\top\qty( \expval{\sigma \sigma^{\top}}-\expval{\sigma}\expval{\sigma}^{\top})t=\expval{\varphi_t^2}-\expval{\varphi_t}^2
\le C \sum_{i=1}^{n}\left\langle\left|\nabla_{\sigma_{i}}
\varphi_t(\sigma)\right|^{2}\right\rangle =C \sum_{i=1}^n
|t_i\cdot(+1)-t_i\cdot(-1)|^2=4C\]
where $C>0$ is the absolute constant from \Cref{Poincare}.
Then $\|\nabla_r \langle \sigma\rangle_r\|_{\mathrm{op}} \leq 4C$, which implies
\eqref{wanttoshow}. This in turn implies \eqref{cgfY}. Then
\Cref{subGvershynin} applied with $s=2n$ implies that
\begin{equation*}
    \expval{\I{\norm{Y}_2 \ge \sqrt{\frac{Cn}{N}}}}\le \exp(-2n)<\exp(-n)
\end{equation*}
which is equivalent to the desired result \eqref{replicaconc}.
\end{proof}

\section{Restricting to a band}\label{sec:band}

In the remaining sections, we prove \Cref{Claim1}. Our goal will be to show
that with probability not exponentially small in $n$, a system of
i.i.d.\ replicas
$\sigma^1,\ldots,\sigma^N$ from the Gibbs distribution will belong to a band on
$\Sigma_n$ centered at $m^t$ and be such that $\{\sigma^i-m^t:i=1,\ldots,N\}$
are nearly orthogonal. We carry out the analysis of one replica in this section,
of two replicas in Section \ref{sec:orthogonal}, and of $N$ replicas in Section
\ref{sec:mainlemma}.

For any $m\in [-1,1]^n$ and ``width parameter'' $\delta>0$,
define a band on $\Sigma_n$ centered at $m$ as
\begin{equation*}
\operatorname{Band}(m,\delta):=\left\{\sigma \in \Sigma_n:
\left|\frac{m^{\top}\left(\sigma-m\right)}{n}\right|<\delta\right\}.
\end{equation*}
Define the corresponding restricted partition function
\begin{equation}\label{defZBB}
Z_{B}\left(m, \delta\right):=\sum_{\sigma \in \operatorname{Band}(m,\delta)} 
\exp(H(\sigma)).
\end{equation}
In this section, we establish that the partition function restricted to such a
band around $m^t$ coincides
to leading exponential order with the unrestricted partition function $Z$, 
for large $n$ and large $t$. Consequently, $\sigma$ drawn from the Gibbs
distribution will belong this band with probability not exponentially small
in $n$.

\begin{Lemma}\label{Claim6}
Fix any $\delta,\varepsilon>0$.
In the setting of \Cref{mainthm_informal}, there exists some
$t_{0}=t_{0}\left(\delta, \varepsilon, \beta, \mu_{D}\right) \geq 1$ such that
for any fixed $t \geq t_{0}$, almost surely for all large $n$,
\begin{equation*}
\left|\frac{1}{n} \log Z_{B}\left(m^{t},
\delta\right)-\Psi_{\mathrm{RS}}\right|<\varepsilon.
\end{equation*}
\end{Lemma}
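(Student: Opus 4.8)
The plan is to prove Lemma~\ref{Claim6} via a conditional second moment argument, conditioning on the sigma-field $\cG_t$ generated by the first $t$ iterates of the AMP algorithm. The key observation is that the band $\operatorname{Band}(m^t,\delta)$ is $\cG_t$-measurable since $m^t = \tanh(h+y^{t-1})$ is $\cG_t$-measurable, so that $Z_B(m^t,\delta)$ is a well-defined random variable once we condition on $\cG_t$. We want to show $\tfrac1n \log Z_B(m^t,\delta) \to \Psi_{\mathrm{RS}}$, and since trivially $Z_B(m^t,\delta) \le Z$, Theorem~\ref{freeRS} already gives the upper bound $\limsup_n \tfrac1n\log Z_B(m^t,\delta) \le \Psi_{\mathrm{RS}}$ almost surely. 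So the real content is the matching lower bound.

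For the lower bound, I would first establish a conditional \emph{first} moment estimate: $\lim_{t\to\infty}\lim_{n\to\infty}\tfrac1n\log \E[Z_B(m^t,\delta)\mid \cG_t] = \Psi_{\mathrm{RS}}$, for each fixed $\delta>0$. This should follow by revisiting the computation behind \eqref{firstm} in Lemma~\ref{condmoments}: the conditional expectation $\E[Z\mid\cG_t]$ decomposes as a sum/integral over configurations $\sigma$ organized by their overlap profile with the AMP iterates, and the dominant contribution comes from configurations whose overlap with $m^t$ (equivalently with $\tanh(h+y^{t-1})$) concentrates at the value $q_*$ predicted by the state evolution --- which is precisely the statement that $m^{t\top}(\sigma-m^t)/n \approx q_* - \tfrac1n\|m^t\|_2^2 \to 0$ since $\tfrac1n\|m^t\|_2^2 \to q_*$ as $t\to\infty$ by the state evolution of Appendix~\ref{section:SEandRS}. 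Hence restricting to the band of width $\delta$ around $m^t$ removes only exponentially-negligible mass from $\E[Z\mid\cG_t]$, once $t$ is large enough (depending on $\delta$). Similarly, I would check that the conditional \emph{second} moment satisfies $\lim_{t\to\infty}\lim_{n\to\infty}\tfrac1n\log\E[Z_B(m^t,\delta)^2\mid\cG_t] \le 2\Psi_{\mathrm{RS}}$, by the same restriction of the second-moment computation underlying \eqref{secondm}: the pair overlap structure that dominates $\E[Z^2\mid\cG_t]$ is the ``replica-symmetric'' one, and the band constraint is compatible with it.

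Given these two conditional moment bounds, the Paley--Zygmund inequality yields
\begin{equation*}
\PP\Big(Z_B(m^t,\delta) \ge \tfrac12\,\E[Z_B(m^t,\delta)\mid\cG_t] \,\Big|\, \cG_t\Big) \ge \frac{\E[Z_B(m^t,\delta)\mid\cG_t]^2}{4\,\E[Z_B(m^t,\delta)^2\mid\cG_t]},
\end{equation*}
and the right-hand side is $\exp(-o(n))$ for each fixed large $t$. This gives, with non-vanishing (indeed $\exp(-o(n))$) conditional probability, $\tfrac1n\log Z_B(m^t,\delta) \ge \tfrac1n\log\E[Z_B(m^t,\delta)\mid\cG_t] - o(1) \to \Psi_{\mathrm{RS}}$. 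To upgrade this ``positive probability'' statement to an almost-sure statement, I would invoke the concentration of $\tfrac1n\log Z_B(m^t,\delta)$ around its conditional mean, which follows from the bounded-differences/log-Sobolev machinery for the Haar measure on $\SO(n)$ (this is the point of the Remark preceding this section about working with $\SO(n)$ rather than $\O(n)$): changing $O$ by a small rotation changes $\tfrac1n\log Z_B$ by $O(1/\sqrt n)$ in a suitable sense, so Borell--TSirelson--Ibragimov--Sudakov-type concentration gives exponential concentration, hence almost-sure convergence of $\tfrac1n\log Z_B(m^t,\delta)$ to $\Psi_{\mathrm{RS}}$ along $n\to\infty$ for each fixed large $t$.

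The main obstacle I anticipate is the conditional second moment bound: one must verify that adding the two band constraints $|m^{t\top}(\sigma^1-m^t)/n|<\delta$ and $|m^{t\top}(\sigma^2-m^t)/n|<\delta$ to the two-replica partition function does not destroy the cancellation that produced $2\Psi_{\mathrm{RS}}$ in \eqref{secondm} --- i.e.\ that the off-diagonal overlap between the two replicas is still driven to the replica-symmetric value and the global concavity of the relevant variational problem (the ``globally concave upper bound'' emphasized in the proof outline for small $\beta$) survives the restriction. Concretely this requires re-running the large-deviations/spherical-integral analysis of \cite{fan2021replica} with the extra linear constraints, and arguing that at high temperature the constrained variational formula still has its maximum at the RS point, contributing exactly $2\Psi_{\mathrm{RS}}$. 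The first moment side and the concentration step should be comparatively routine adaptations of \cite{fan2021replica}; it is the second moment computation with the band constraint that carries the real work, and it is presumably why the subsequent sections on ``two replicas'' and ``$N$ replicas'' are needed.
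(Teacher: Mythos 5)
Your proof architecture matches the paper's: conditional first moment on the band (Lemma~\ref{Claim7}), concentration of $\tfrac1n\log Z_B$ around its conditional mean via Gromov's inequality on $\Haar(\SO(n))$ (Lemma~\ref{Claim9}), and a Paley--Zygmund argument combined with that concentration to convert the positive-probability lower bound into an almost-sure one. However, you misdiagnose where the difficulty lies. You do not need a band-restricted conditional second moment computation at all: since $Z_B(m^t,\delta)\le Z$, one has $\E[Z_B^2\mid\cG_t]\le\E[Z^2\mid\cG_t]$, and the already-established unconditional bound \eqref{secondm} of Lemma~\ref{condmoments} feeds directly into Paley--Zygmund. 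This is exactly what the paper does, so the ``main obstacle'' you flag---re-running the two-replica large deviations analysis with extra band constraints to keep the global concavity argument intact---simply does not arise for this lemma. Relatedly, the two-replica section you anticipate is in fact devoted to a different quantity (the restricted free energy $Z_c(m^t,\delta,\eta)$ over non-orthogonal pairs), which serves Lemma~\ref{Claim3} and ultimately Lemma~\ref{Claim1}, not the second moment of $Z_B$.

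The genuine technical content is in what you call the ``comparatively routine'' first-moment adaptation: one must verify that the band restriction $|m^{t\top}(\sigma-m^t)/n|<\delta$ still contains the optimizer of the Varadhan variational formula. This requires showing that the state-evolution ``optimal parameters'' $(\tilde v_*,w_*)$ correspond, up to $o_t(1)$, to overlaps in the interior of the band---this is the inclusion of Lemma~\ref{Claim5}, which uses $\tfrac1n\|m^t\|_2^2\to q_*$ and the AMP state evolution in a nontrivial way. So while your reasoning that ``the band removes only exponentially-negligible mass'' is the right intuition, making it rigorous is where the $t$-dependence of $t_0(\delta,\varepsilon,\beta,\mu_D)$ originates, and it is the real work for this lemma rather than the second-moment computation you anticipated.
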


We show this result first for the
conditional moment $\E[Z_B(m^t,\delta) \mid \cG_t]$ in
Section \ref{sec:bandconditional}. We then show concentration of $\log
Z_B(m^t,\delta)$ around its conditional mean in Section \ref{selfavgONES},
and use these results and a conditional second moment argument to conclude the
proof of \Cref{Claim6}.

\subsection{Conditional first moment restricted to the
band}\label{sec:bandconditional}

This section establishes the following lemma, by an extension of
arguments in \cite[Section 3]{fan2021replica}.

\begin{Lemma}\label{Claim7}
Fix any $\delta,\varepsilon>0$.
In the setting of \Cref{mainthm_informal}, there exists some $t_{0}=t_{0}\left(\delta, \varepsilon, \beta, \mu_{D}\right) \geq$ 1 such that for any fixed $t \geq t_{0}$, almost surely for all large $n$,
\begin{equation}\label{firstmomentR}
\left|\frac{1}{n} \log \mathbb{E}\left[Z_{B}\left(m^{t}, \delta\right) \mid
\mathcal{G}_{t}\right]-\Psi_{\mathrm{RS}}\right|<\varepsilon.
\end{equation}
\end{Lemma}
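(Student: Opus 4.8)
The starting point is the unconditional/unrestricted conditional first moment computation already available in \Cref{condmoments}, namely $\frac{1}{n}\log \E[Z \mid \cG_t] \to \Psi_{\mathrm{RS}}$ as $n\to\infty$ and then $t\to\infty$. Since $Z_B(m^t,\delta)\le Z$ trivially, one half of \eqref{firstmomentR} — the upper bound $\frac{1}{n}\log \E[Z_B(m^t,\delta)\mid\cG_t] \le \Psi_{\mathrm{RS}}+\varepsilon$ — is immediate from \Cref{condmoments}. So the real content is the matching lower bound: the restriction to the band $\{\sigma: |m^{t\top}(\sigma-m^t)/n|<\delta\}$ costs nothing at the exponential scale. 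The plan is to revisit the explicit variational/large-deviations formula for $\frac{1}{n}\log\E[Z\mid\cG_t]$ derived in \cite[Section 3]{fan2021replica}, which after conditioning on $\cG_t$ expresses this quantity as (the large-$n$ limit of) a supremum of a concave functional over an order-parameter describing the empirical law of $(\sigma_i, h_i, (\text{AMP-field})_i)$; the band constraint adds one extra linear constraint $m^{t\top}\sigma/n = \|m^t\|_2^2/n$ to this optimization.

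Concretely, I would first recall that conditioning $Z$ on $\cG_t$ replaces $\frac12\sigma^\top\bar J\sigma$ by a Gaussian/spherical-integral expression that, via the Guionnet–Maïda asymptotics, contributes a term depending only on the overlap structure $\sigma^\top s^t, \sigma^\top \sigma, \ldots$; the first-moment computation in \cite{fan2021replica} then reduces, by a tilted-measure/Laplace argument over $\sigma\in\Sigma_n$, to a low-dimensional optimization whose optimum (at high temperature, where the functional is globally concave) is attained at the "replica-symmetric" point that corresponds precisely to $\sigma$ having overlap $q_*$ with the AMP field and hence $m^{t\top}\sigma/n \approx \|m^t\|_2^2/n \approx q_*$. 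The key observation is that the band constraint $m^{t\top}(\sigma-m^t)/n\approx 0$ is \emph{satisfied by the unconstrained optimizer} up to $o_t(1)$, using the AMP state evolution from \Cref{section:SEandRS}: as $t\to\infty$, $m^t\to$ a TAP fixed point, $\|m^t\|_2^2/n\to q_*$, and the Gibbs-relevant overlap $\frac{1}{n}\sum_i \E[\sigma_i\,|\,\cdot]\,m_i^t$ also concentrates at $q_*$. Therefore restricting to $\mathrm{Band}(m^t,\delta)$ removes only configurations that were already exponentially subdominant, so the restricted supremum agrees with the unrestricted one up to an error that vanishes as $t\to\infty$ (for fixed $\delta>0$), giving the lower bound $\frac{1}{n}\log\E[Z_B(m^t,\delta)\mid\cG_t]\ge \Psi_{\mathrm{RS}}-\varepsilon$ for $t\ge t_0(\delta,\varepsilon,\beta,\mu_D)$.

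Operationally the steps are: (1) write $\E[Z_B(m^t,\delta)\mid\cG_t]$ using the same conditional-law representation as for $\E[Z\mid\cG_t]$ in \cite[Section~3]{fan2021replica}, carrying along the indicator $\I{|m^{t\top}(\sigma-m^t)/n|<\delta}$; (2) expand the indicator via a Laplace/tilting parameter or simply note it is a closed condition on an empirical-average functional of $\sigma$ and apply the same large-deviations upper and lower bounds, now over the constrained set of order parameters; (3) identify the constrained variational problem, show its value differs from the unconstrained one by at most $o_t(1)$ by checking that the unconstrained RS optimizer is $\delta$-feasible for all large $t$ (this is where AMP state evolution, specifically the limits \eqref{ytysSE}-type statements and $\|m^t\|^2/n\to q_*$, $\langle\sigma,m^t\rangle/n \to q_*$, enter); (4) combine with the trivial upper bound $Z_B\le Z$ and \Cref{condmoments} to conclude. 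The main obstacle I anticipate is step (3): one must carefully track that the overlap of a "typical" spin configuration contributing to $\E[Z\mid\cG_t]$ with $m^t$ is indeed $\approx q_* \approx \|m^t\|^2/n$, which requires that the first-moment-optimal tilted law of $\sigma$ has mean close to $m^t = \tanh(h+y^{t-1})$ coordinatewise — this should follow from the explicit form of the tilt in \cite{fan2021replica} together with AMP state evolution, but making the $\delta$-feasibility quantitative and uniform in large $n$ (not just in the $n\to\infty$ limit) is the delicate part, and is presumably why the lemma is stated with the iterated-limit / "for $t\ge t_0$, a.s.\ for large $n$" structure rather than a single limit.
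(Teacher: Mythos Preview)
Your proposal is correct and follows essentially the same approach as the paper: the upper bound is immediate from $Z_B\le Z$ and \Cref{condmoments}, and the lower bound is obtained by revisiting the variational formula of \cite[Section~3]{fan2021replica} and checking that the RS optimizer $(u_*,\tilde v_*,w_*)$ is $\delta$-feasible for the band constraint once $t$ is large, via AMP state evolution. The one technical point the paper makes explicit that you leave implicit is how to pass the band constraint into the low-dimensional LDP coordinates: since $m^t=(1-q_*)(x^t+y^{t-1})$, the quantity $m^{t\top}\sigma/n$ is a fixed linear functional of $(v(\sigma),w(\sigma))$, and the paper proves a separate inclusion lemma showing that $\|(v(\sigma),w(\sigma))-(\tilde v_*,w_*)\|_2<\delta/3$ implies $\sigma\in\mathrm{Band}(m^t,\delta)$, which then lets one lower-bound $\E[Z_B\mid\cG_t]$ by the Varadhan integral restricted to an \emph{open} neighborhood of the optimizer in $(v,w)$-space.
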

\begin{proof}
Fix $t \geq 1$, and
write as shorthand $Z_{B}=Z_{B}\left(m^{t}, \delta\right)$. For any
$\sigma \in \Sigma_n$, the event
$\{\sigma \in \operatorname{Band}(m^t,\delta)\}$ is $\cG_t$-measurable, so
\begin{equation}\label{79uu}
\begin{aligned}
\mathbb{E}\left[Z_{B} \mid \mathcal{G}_{t}\right] &=\sum_{\sigma \in \Sigma_{n}} \mathbb{I}\left(\sigma \in \operatorname{Band}\left(m^{t}, \delta\right)\right) \cdot \mathbb{E}\left[\exp (H(\sigma)) \mid \mathcal{G}_{t}\right]  =\sum_{\sigma \in \operatorname{Band}\left(m^{t}, \delta\right)} \exp \left(h^{\top} \sigma+\frac{n}{2} \cdot f_{n}(\sigma)\right)
\end{aligned}
\end{equation}
where
\begin{equation*}
f_{n}(\sigma):=\frac{2}{n} \log \mathbb{E}\left[\exp \left(\frac{1}{2}
\sigma^{\top} O^{\top} \bar{D} O \sigma\right) \;\bigg|\; \mathcal{G}_{t}\right].
\end{equation*}

Let $X=X_t:=(x^1,\ldots,x^t)$ and $Y=Y_t:=(y^1,\ldots,y^t)$
collect the AMP iterates \eqref{AMP} up to iteration $t$. Then $X,Y$ are also
$\cG_t$-measurable. Following the proof of
\cite[Lemma 3.2]{fan2021replica}, define the low-dimensional functionals 
\begin{align}\label{uvwlow}
	u(\sigma)=\frac{1}{n} h^{\top} \sigma, \qquad
\mqty(v(\sigma) \\ w(\sigma))=\qty[\frac{1}{n}\mqty(X^\top X & X^\top Y \\
Y^\top X & Y^\top Y)]^{-1/2}\cdot \frac{1}{n} \qty(X,Y)^\top \sigma.
\end{align}
Then the exact same argument as in \cite[Lemma 3.2]{fan2021replica}
(applying \Cref{lemma:QAOA} and \Cref{conditionO} as discussed in Appendix
\ref{appendix:SOn} to handle $O \sim \Haar(\SO(n))$ rather than
$O \sim \Haar(\O(n))$) shows
\[
\frac{1}{n} \log \mathbb{E}\left[Z_{B} \mid \mathcal{G}_{t}\right] 
=\log 2+\frac{1}{n} \log  \bigg \langle \mathbb{I}\left(\sigma \in
\operatorname{Band}\left(m^{t}, \delta\right)\right) \cdot \exp
\left(n\left[u(\sigma)+\frac{1}{2} \cdot f(v(\sigma),
w(\sigma))\right]\right)\bigg\rangle_{\mathrm{Unif}}+o_n(1)\]
where $\expval{\cdot}_{\mathrm{Unif}}$ is the average over the uniform
distribution on $\Sigma_n$, and
$\lim_{n \to \infty} o_n(1)=0$ almost surely.
Here, the function $f(v,w)$ is defined on the open domain
\begin{equation*}
\mathcal{V}:=\left\{(v,w) \in \R^t \times
\R^t:\|v\|_{2}^{2}+\|w\|_{2}^{2}<1\right\}
\end{equation*}
by
\begin{equation*}
\begin{aligned}
f(v, w) := \inf _{\gamma>\bar{d}_{+}} \frac{2 a_{*}}{\kappa_{*}^{1/2}} v^{\top} w +\left(\lambda_{*}-\frac{a_{*}}{\kappa_{*}}\right)\|w\|_{2}^{2}+\mathcal{F}(\gamma) \cdot\left\|v-\kappa_{*}^{-1 / 2} w\right\|^{2}_2 +\mathcal{H}\left(\gamma, 1-\|v\|^{2}_2-\|w\|^{2}_2\right)
\end{aligned}
\end{equation*}
where $\kappa_*=\lim_{n \to \infty} n^{-1}\Tr \Gamma^2$ is as defined in
\eqref{kappadeltastar},
$a_*=\bar{R}(1-q_*)$, and $\lambda_*=a_*+\frac{1}{1-q_{*}}$. This
definition of $f(v,w)$ is extended to the closure $\bar{\mathcal{V}}$ by
continuity. The explicit forms of the functions $\mathcal{F}$ and
$\mathcal{H}$ will not be important for what follows, and can be found in
\cite[Eqs.\ (3.3--3.4)]{fan2021replica}.

We define as in the proof of \cite[Lemma 3.5]{fan2021replica} the ``optimal
parameters''
\begin{equation}\label{uvwstar}
	\begin{aligned}
		&u_{*}=\mathbb{E}\left[\mathsf{H} \cdot \tanh \left(\mathsf{H}+\sigma_{*} \mathsf{G}\right)\right]\\ &v_{*}=\left(1-q_{*}\right) \Delta_{t}^{1 / 2} e_{t} \\ &\tilde{v}_*=v_*+\left(1-q_{*}\right)\left[\Delta_{t}^{-\frac{1}{2}} \delta_{t}-\Delta_{t}^{\frac{1}{2}} e_{t}\right]=\left(1-q_{*}\right)\left[\Delta_{t}^{-\frac{1}{2}} \delta_{t}\right] \\ &w_{*}=\kappa_{*}^{1 / 2}\left(1-q_{*}\right) \Delta_{t}^{1 / 2} e_{t}
	\end{aligned}
\end{equation}
where $\Delta_t,\delta_t$ are defined in (\ref{Deltadef}--\ref{deltat})
that describe the AMP state evolution, and $e_t \in \R^t$ is the $t^\text{th}$
standard basis vector. Let us set
\begin{equation*}
\tilde{B}:=\left\{(v, w) \in \bar{\mathcal{V}}:\left\| \mqty(v \\ w)- \mqty(\tilde{v}_* \\ w_*)\right\|_{2}<\frac{\delta}{3}\right\}.
\end{equation*}
We will show in \Cref{Claim5}  that for all large $n$,
\begin{equation*}
\left\{\left\| \mqty(v(\sigma) \\ w(\sigma))- \mqty(\tilde{v}_* \\
w_*)\right\|_{2}<\frac{\delta}{3}\right\} \text{ implies }\left\{\left|\frac{1}{n}
m^{t} \cdot \sigma-q_{*}\right|<\delta\right\},
\end{equation*}
which is equivalent to
$\mathbb{I}\left(\sigma \in \operatorname{Band}\left(m^{t}, \delta\right)\right)
\geq \mathbb{I}((v(\sigma), w(\sigma)) \in \tilde{B})$.
Thus,
\begin{equation*}
\begin{gathered}
\frac{1}{n}\log \E[Z_B \mid \cG_t] \geq \log 2
+\frac{1}{n} \log \left\langle  \mathbb{I}((v(\sigma), w(\sigma)) \in \tilde{B})
\cdot \exp \left(n\left[u(\sigma)+\frac{1}{2} \cdot f(v(\sigma),
w(\sigma))\right]\right)\right\rangle_{\mathrm{Unif}}+o_n(1).
\end{gathered}
\end{equation*}

For $\sigma \sim \operatorname{Unif}(\Sigma_{n})$, almost surely as $n \to
\infty$, it is shown
in \cite[Lemma 3.2]{fan2021replica} that $(u(\sigma), v(\sigma), w(\sigma))$
satisfies a large deviation principle with good rate function
\begin{equation*}
\lambda^{*}(u, v, w)=\sup _{U \in \mathbb{R},\;V, W \in \mathbb{R}^{t}} U \cdot
u+V^{\top} v+W^{\top} w-\lambda(U, V, W).
\end{equation*}
Here, $\lambda(U,V,W)$ is the almost-sure limit of the
cumulant generating function of $(u(\sigma), v(\sigma), w(\sigma))$,
\[\lambda(U, V, W)
=\mathbb{E}\left[\log \cosh \left(U \cdot \mathsf{H}+V^{\top}
\Delta^{-\frac{1}{2}}_t \left(\mathsf{X}_{1}, \ldots,
\mathsf{X}_{t}\right)+\kappa_{*}^{-\frac{1}{2}} W^{\top}
\Delta^{-\frac{1}{2}}_t \left(\mathsf{Y}_{1}, \ldots,
\mathsf{Y}_{t}\right)\right)\right]\]
for the random variables $\mathsf{X}_1,\dots, \mathsf{X}_t$ and
$\mathsf{Y}_1,\dots, \mathsf{Y}_t$ defined by the AMP state evolution
in \Cref{AMPSE}.
\cite[Lemma 3.2]{fan2021replica} checks also the exponential moment condition,
almost surely for any $c>1$,
\begin{equation*}
\limsup _{n \rightarrow \infty} \frac{1}{n} \log \left \langle \exp \left(c n \cdot\left(u(\sigma)+\frac{1}{2} f(v(\sigma), w(\sigma))\right)\right)\right \rangle_{\mathrm{Unif}} <\infty
\end{equation*}
which is needed for applying Varadhan's Lemma (see \cite[Theorem~4.3.1]{Dembo1998large}).
Thus, using \cite[Exercise~4.3.11 and Lemma~4.3.8]{Dembo1998large} and the fact that $\tilde{B} \cap \overline{\mathcal{V}}$ is open with respect to subspace topology on $\overline{\mathcal{V}}$, 
\begin{equation*}
\begin{aligned}
&\liminf _{n \rightarrow \infty} \frac{1}{n} \log \left\langle
\mathbb{I}((v(\sigma), w(\sigma)) \in \tilde{B}) \cdot \exp
\left(n\left(u(\sigma)+\frac{1}{2} \cdot f(v(\sigma), w(\sigma))\right)\right)\right \rangle_{\mathrm{Unif}} \\
&\hspace{1in}
\geq \sup_{u \in \mathbb{R},\,(v, w) \in \bar{\mathcal{V}} \cap \tilde{B}}
u+\frac{f(v, w)}{2}-\lambda^{*}(u, v, w).
\end{aligned}
\end{equation*}
Combining the above, and lower-bounding the supremum over $\bar{\mathcal{V}}$ by
that over $\mathcal{V}$, we conclude that
\begin{equation}\label{93ee}
\begin{aligned}
\liminf _{n \rightarrow \infty} \frac{1}{n} \log \mathbb{E}\left[Z_{B} \mid \mathcal{G}_{t}\right] 
&\quad \geq \sup_{u \in \mathbb{R},\,(v, w) \in \mathcal{V} \cap \tilde{B}} \log 2+u+\frac{f(v, w)}{2}-\lambda^{*}(u, v, w) \\
&\quad=\sup_{u \in \mathbb{R},\,(v, w) \in \mathcal{V} \cap \tilde{B}} \inf
_{\gamma>d_{+}} \inf _{U \in \mathbb{R}, \, V, W \in \mathbb{R}^{t}} \Phi_{1, t}(u, v, w ; \gamma, U, V, W)
\end{aligned}
\end{equation}
where $\Phi_{1,t}$ is the same variational function as in 
\cite[Eq.\ (3.1)]{fan2021replica}. The difference between this lower bound
and the variational formula $\Psi_{1,t}$ of \cite[Eq.\ (3.2)]{fan2021replica} is the
restricted domain $\tilde{B}$ for $(v,w)$, arising from the restriction
of $\sigma \in \Sigma_n$ to $\operatorname{Band}(m^t,\delta)$.

Finally, it is shown in \cite[Lemma~3.5]{fan2021replica} that specializing
the supremum over $u,v,w$ to $u_*, \tilde{v}_*, w_*$ as defined in \eqref{uvwstar}
yields the lower bound
\begin{equation*}
\inf_{\gamma>d_{+}} \inf_{U \in \mathbb{R},\, V, W \in \mathbb{R}^{t}} \Phi_{1, t}\left(u_{*}, \tilde{v}_{*}, w_{*} ; \gamma, U, V, W\right) \geq \Psi_{\mathrm{RS}}+o_{t}(1)
\end{equation*}
where $o_{t}(1)$ is a deterministic quantity that converges to 0 as $t
\rightarrow \infty$. Since $\left(\tilde{v}_{*}, w_{*}\right) \in \mathcal{V}
\cap \tilde{B}$, this shows the lower bound
\[
\liminf_{n \rightarrow \infty} \frac{1}{n} \log \mathbb{E} \left[Z_{B} \mid \mathcal{G}_{t}\right] 
\geq \Psi_{\mathrm{RS}}-\varepsilon\]
for some $t_0=t_0(\varepsilon,\delta,\beta,\mu_D)$
and all $t>t_0$. Conversely, note that
$\frac{1}{n} \log \mathbb{E}\left[Z_{B} \mid \mathcal{G}_{t}\right] \leq \frac{1}{n} \log \mathbb{E}\left[Z \mid \mathcal{G}_{t}\right]$.
Using \eqref{firstm}, we also have for some
$t_0=t_0(\varepsilon,\beta,\mu_D)$ and all $t>t_0$ that
\[\limsup_{n \rightarrow \infty} \frac{1}{n} \log \mathbb{E}\left[Z_{B} \mid
\mathcal{G}_{t}\right] \leq \limsup_{n \rightarrow \infty} \frac{1}{n} \log
\mathbb{E}\left[Z \mid \mathcal{G}_{t}\right] \le
\Psi_{\mathrm{RS}}+\varepsilon,\]
and the lemma follows.
\end{proof}

We establish the following inclusion that was used in the preceding proof.

\begin{Lemma}\label{Claim5}
Fix $\delta>0$.
In the setting of \Cref{mainthm_informal}, there exists some
$t_{0}=t_{0}\left(\delta, \beta, \mu_{D}\right) \geq 1$ such that for any fixed
$t \geq t_{0}$, almost surely for all large $n$,
\begin{equation*}
\left\{\sigma \in \Sigma_n:\left\| \mqty(v(\sigma)\\
w(\sigma))-\mqty(\tilde{v}_* \\ w_*) \right\|_{2}<\delta\right\} \subseteq
\left\{\sigma \in \Sigma_n:\left|\frac{1}{n} m^{t} \cdot \sigma-q_{*}\right|<3 \delta\right\}.
\end{equation*}
\end{Lemma}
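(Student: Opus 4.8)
The plan is to use the fact that $m^t$ lies \emph{exactly} in the span of the AMP iterates, so that $\tfrac1n m^t\cdot\sigma$ is an explicit linear functional of the low-dimensional coordinates $(v(\sigma),w(\sigma))$ from \eqref{uvwlow}, and then to track this functional near the optimal point $(\tilde v_*,w_*)$. Concretely, by \eqref{mtdef} we have $m^t=(1-q_*)(x^t+y^{t-1})$; for $t\ge 2$ both $x^t$ and $y^{t-1}$ are columns of $X_t=(x^1,\ldots,x^t)$ and $Y_t=(y^1,\ldots,y^t)$, so $m^t=(1-q_*)(X_t,Y_t)c$ for a fixed selector vector $c\in\R^{2t}$ (a $1$ in the $t$-th coordinate of the first block and a $1$ in the $(t-1)$-th coordinate of the second). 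Writing $M_n=\tfrac1n(X_t,Y_t)^\top(X_t,Y_t)$ and using $\tfrac1n(X_t,Y_t)^\top\sigma=M_n^{1/2}(v(\sigma),w(\sigma))$ from \eqref{uvwlow}, I would set $\pi_t:=(1-q_*)M_n^{1/2}c\in\R^{2t}$ and record the identities $\tfrac1n m^t\cdot\sigma=\langle\pi_t,(v(\sigma),w(\sigma))\rangle$ and $\|\pi_t\|_2^2=(1-q_*)^2 c^\top M_n c=\tfrac1n\|m^t\|_2^2\le 1$ (the bound since $|m^t_i|\le1$). When $M_n$ is invertible, $\pi_t$ is exactly the whitened projection $(v(m^t),w(m^t))$ of $m^t$.

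With this in hand the containment is a one-line estimate: on the event $\|(v(\sigma),w(\sigma))-(\tilde v_*,w_*)\|_2<\delta$, Cauchy--Schwarz gives $|\tfrac1n m^t\cdot\sigma-q_*|\le\|\pi_t\|_2\,\delta+|\langle\pi_t,(\tilde v_*,w_*)\rangle-q_*|\le\delta+|\langle\pi_t,(\tilde v_*,w_*)\rangle-q_*|$. So it remains to show the residual $|\langle\pi_t,(\tilde v_*,w_*)\rangle-q_*|$ is at most $2\delta$ once $t\ge t_0(\delta,\beta,\mu_D)$ and $n$ is large.

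For the residual I would pass to the $n\to\infty$ limit using the AMP state evolution (\Cref{AMPSE}, cf.\ \eqref{ytysSE}): $M_n\to M_\infty$ almost surely for each fixed $t$, hence $\pi_t\to\pi_t^\infty:=(1-q_*)M_\infty^{1/2}c$ with $\|\pi_t^\infty\|_2^2=\lim_n\tfrac1n\|m^t\|_2^2=\E[\tanh(\mathsf H+\mathsf Y_{t-1})^2]$, which converges to $q_*=\E[\tanh(\mathsf H+\sigma_*\mathsf G)^2]$ as $t\to\infty$ since $\mathsf Y_{t-1}\to\sigma_*\mathsf G$ in law along the state evolution and by \eqref{fixedpointbar}. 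It then remains to identify $\pi_t^\infty$ with $(\tilde v_*,w_*)$ up to $o_t(1)$: unpacking \eqref{uvwstar}, the vector $(\tilde v_*,w_*)=\big((1-q_*)\Delta_t^{-1/2}\delta_t,\ \kappa_*^{1/2}(1-q_*)\Delta_t^{1/2}e_t\big)$ should be exactly the limiting whitened projection of $m^t$ onto $\operatorname{span}(X_t,Y_t)$, up to a correction of size $o_t(1)$ measuring how far the AMP iterate is from a genuine fixed point of \eqref{AMP}. Granting $\pi_t^\infty=(\tilde v_*,w_*)+r_t$ with $\|r_t\|_2=o_t(1)$, one gets $\langle\pi_t,(\tilde v_*,w_*)\rangle\to\langle\pi_t^\infty,(\tilde v_*,w_*)\rangle=\|(\tilde v_*,w_*)\|_2^2+o_t(1)=\|\pi_t^\infty\|_2^2+o_t(1)\to q_*+o_t(1)$, which is below $2\delta$ for $t$ large; this ``optimal-parameter'' bookkeeping parallels the computation in \cite[Lemma~3.5]{fan2021replica}.

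The main obstacle is precisely this identification $\pi_t^\infty=(\tilde v_*,w_*)+o_t(1)$, i.e.\ matching the joint limiting Gram structure $M_\infty^{1/2}c$ of the full family $(X_t,Y_t)$ against the explicit state-evolution formulas for $\Delta_t$, $\delta_t$ and the scalar $\kappa_*$. This is where one uses the relation $x^t=\tfrac1{1-q_*}\tanh(h+y^{t-1})-y^{t-1}$ linking consecutive iterates, the cross-block structure of the state-evolution covariance, and the fact that $\{m^t\}$ approaches a fixed point of \eqref{AMP} as $t\to\infty$ (so that the correction $r_t$ vanishes). I would also check, as in \cite{fan2021replica}, that the coordinates $(v(\cdot),w(\cdot))$ are well-defined for all large $n$ (almost-sure invertibility of $M_n$ for each fixed $t$), and simply take $t_0\ge 2$ to avoid the harmless edge case $t=1$ where $y^0\notin\operatorname{span}(Y_1)$.
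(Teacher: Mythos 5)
Your proposal is correct and lands on essentially the same mechanism as the paper (AMP state evolution plus a Cauchy--Schwarz bound on whitened coordinates), but organizes it more cleanly. The paper expands $(v(\sigma),w(\sigma))$ as $\mathrm{diag}(\Delta_t^{-1/2},\kappa_*^{-1/2}\Delta_t^{-1/2})\cdot\tfrac1n(X,Y)^\top\sigma + r_n(\sigma)$, extracts the $e_t^\top$ component to bound $\tfrac1n x^t\cdot\sigma$ and $\tfrac1n y^t\cdot\sigma$ separately, and then corrects for the shift $y^t\mapsto y^{t-1}$ via $\|y^t-y^{t-1}\|_2/\sqrt n=o_t(1)$. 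Your version instead observes that $m^t=(1-q_*)(X_t,Y_t)c$ lies exactly in the span of the iterates, so that $\tfrac1n m^t\cdot\sigma=\langle\pi_t,(v(\sigma),w(\sigma))\rangle$ with $\|\pi_t\|_2\le1$, and the containment becomes one Cauchy--Schwarz inequality; the $y^t$ versus $y^{t-1}$ discrepancy is absorbed into the second block of the selector $c$ rather than being treated as a separate correction. This is a genuine organizational improvement and a nice geometric reframing, at the modest cost of requiring the \emph{whole-vector} identification $\|\pi_t^\infty-(\tilde v_*,w_*)\|_2=o_t(1)$ rather than just the scalar estimates the paper uses.

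The one spot you should make airtight is precisely that identification, which you correctly flag as the ``main obstacle.'' It does hold, and needs only the two scalar facts $\delta_{t,t+1}\to\delta_*$ and $\delta_{t-1,t}\to\delta_*$ from \Cref{Prop23}, plus the exact identity $\delta_{tt}=\delta_*$: writing out the two blocks, $(1-q_*)^2\|\Delta_t^{1/2}e_t-\Delta_t^{-1/2}\delta_t\|_2^2 = (1-q_*)^2(\delta_{tt}-2\delta_{t,t+1}+\delta_t^\top\Delta_t^{-1}\delta_t)\le 2(1-q_*)^2(\delta_*-\delta_{t,t+1})$ using the Schur complement bound $\delta_t^\top\Delta_t^{-1}\delta_t\le\delta_{t+1,t+1}=\delta_*$ (this is the content of the computation cited from \cite[Lemma~3.5]{fan2021replica}), and $(1-q_*)^2\kappa_*\|\Delta_t^{1/2}(e_{t-1}-e_t)\|_2^2=2(1-q_*)^2\kappa_*(\delta_*-\delta_{t-1,t})$. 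Both are $o_t(1)$, so $\|r_t\|_2=o_t(1)$ as you assert, and the rest of your estimate goes through. The caveat $t_0\ge 2$ (so that $y^{t-1}$ is a column of $Y_t$) is the right one to impose and is harmless for the statement.
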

\begin{proof}
It is shown in \cite[Lemma 3.2]{fan2021replica}
that $\Delta_{t}$ and $\kappa_*\Delta_t$ are non-singular for all $t \geq 1$,
and almost surely
\begin{equation*}
\limsup_{n \rightarrow \infty} n^{-1 / 2}\|X\|_{\mathrm{op}}<\infty, 
\qquad \limsup_{n \rightarrow \infty} n^{-1 / 2}\|Y\|_{\mathrm{op}}<\infty.
\end{equation*}
Using these and the AMP state evolution of \Cref{AMPSE},
\begin{equation*}
    \begin{aligned}
    \mqty(v(\sigma) \\w(\sigma)) &=\qty[\frac{1}{n}\mqty(X^\top X & X^\top Y \\ Y^\top X & Y^\top Y)]^{-1/2}\cdot \frac{1}{n} \qty(X,Y)^\top \sigma \\
    &=\mqty(\Delta_t^{-1/2} & 0 \\ 0 & \kappa_*^{-1/2} \Delta_t^{-1/2})\qty(\frac{1}{n} \mqty(X^\top\sigma \\ Y^\top \sigma )) +r_n(\sigma)
    \end{aligned}
\end{equation*}
where $r_{n}(\sigma) \in \R^{2t}$ satisfies
$\lim_{n \to \infty} \sup_{\sigma \in \Sigma_n}
\left\|r_{n}(\sigma)\right\|_{2}=0$ almost surely.

Now assume 
\begin{equation*}
	\norm{\mqty(v(\sigma) \\ w(\sigma))-\mqty(\tilde{v}_* \\ w_*)}_2<\delta
\end{equation*} and recall definitions of $\tilde{v}_*$ and $w_*$ from
\eqref{uvwstar}. Applying the above, we must then have
\begin{align}
\left\|\Delta_{t}^{-\frac{1}{2}}\left(\frac{1}{n} X^{\top}
\sigma-\left(1-q_{*}\right) \delta_{t}\right)\right\|_{2} &<
\delta+\left\|r_{n}(\sigma)\right\|_{2},\label{71wr}\\
\left\|\Delta_{t}^{-\frac{1}{2}}\left(\frac{1}{n} \kappa_{*}^{-\frac{1}{2}}
Y^{\top} \sigma-\kappa_{*}^{\frac{1}{2}}\left(1-q_{*}\right) \Delta_{t}
e_{t}\right)\right\|_{2} &< \delta+\left\|r_{n}(\sigma)\right\|_{2}.\label{72wr}
\end{align}
Recall that $\delta_t=(\delta_{1,t+1},\ldots,\delta_{t,t+1})$ as defined in
\eqref{deltat}, where $\delta_{t,t+1}=\delta_*+o_t(1)$ by \eqref{ytysSE}.
Also, $\|e_t^\top \Delta_t^{1/2}\|_2^2=e_t^\top \Delta_t
e_t=\delta_{tt}=\delta_*$ by \Cref{Prop23}. Then, using \eqref{71wr},
\begin{align*}
		\left|\frac{1}{n} x^{t} \cdot \sigma-\left(1-q_{*}\right)
\delta_{*}\right| &=\left|e_{t}^{\top}\left(\frac{1}{n} X^{\top}
\sigma-\left(1-q_{*}\right) \delta_{t}\right)\right|+o_t(1) \\
&\leq\left\|e_{t}^{\top}
\Delta_{t}^{\frac{1}{2}}\right\|_{2}\left\|\Delta_{t}^{-\frac{1}{2}}\left(\frac{1}{n}
X^{\top} \sigma-\left(1-q_{*}\right) \delta_{t}\right)\right\|_{2}+o_t(1)
		\leq
\sqrt{\delta_{*}}(\delta+\left\|r_{n}(\sigma)\right\|_{2})+o_t(1).
	\end{align*}
Similarly, using \eqref{72wr} and $\delta_*=\sigma_*^2/\kappa_*$ from
\eqref{kappadeltastar},
    \begin{align*}
		\kappa_{*}^{-\frac{1}{2}} \abs{\frac{1}{n} y^{t} \cdot
\sigma-\left(1-q_{*}\right) \sigma_{*}^{2}}&=\abs{e_{t}^{\top}\left(\frac{1}{n} \kappa_{*}^{-\frac{1}{2}} Y^{\top} \sigma-\kappa_{*}^{\frac{1}{2}}\left(1-q_{*}\right) \Delta_{t} e_{t}\right)} \\
		& \leq\left\|e_{t}^{\top} \Delta_{t}^{\frac{1}{2}}\right\|_{2}\left\|\Delta_{t}^{-\frac{1}{2}}\left(\frac{1}{n} \kappa_{*}^{-\frac{1}{2}} Y^{\top} \sigma-\kappa_{*}^{\frac{1}{2}}\left(1-q_{*}\right) \Delta_{t} e_{t}\right)\right\|_{2} 
    \leq \sqrt{\delta_{*}}(\delta+\left\|r_{n}(\sigma)\right\|_{2}).
	\end{align*}
Combining these yields
\[\left|\frac{1}{n}\left(1-q_{*}\right)\left(x^{t}+y^{t}\right) \cdot \sigma-\left(1-q_{*}\right)^{2}\left(\delta_{*}+\sigma_{*}^{2}\right)\right|
		\leq
\left(1-q_{*}\right)\left(\sqrt{\delta_{*}}+\sqrt{\kappa_{*} \delta_{*}}\right)
(\delta+\left\|r_{n}(\sigma)\right\|_{2})+o_1(t).\]
For the left side, recall from \eqref{mtdef} and \eqref{deltastar} that
$m^{t}=\left(1-q_{*}\right)\left(x^{t}+y^{t-1}\right)$ and
$q_{*}=\left(1-q_{*}\right)^{2}\left(\delta_{*}+\sigma_{*}^{2}\right)$.
We then have
\begin{equation*}
	\begin{aligned}
		\left|\frac{1}{n}\left(1-q_{*}\right)\left(x^{t}+y^{t}\right) \cdot \sigma-\frac{1}{n} m^{t} \cdot \sigma\right| =\left|\frac{1}{n}\left(1-q_{*}\right)\left(y^{t}-y^{t-1}\right) \cdot \sigma\right| 
	\leq (1-q_*)\sqrt{\frac{\left\|y^{t}-y^{t-1}\right\|_{2}^{2}}{n}}
=o_t(1)
	\end{aligned}
\end{equation*}
by \eqref{ytysSE}. For the right side, let us apply
\[\sqrt{\delta_*}+\sqrt{\kappa_*\delta_*}
=\sqrt{\delta_*}+\sqrt{\sigma_*^2}
\leq 2\sqrt{\delta_*+\sigma_*^2}=\frac{2\sqrt{q_*}}{1-q_*} \leq \frac{2}{1-q_*}.\]
Then
\[\left|\frac{1}{n} m^{t} \cdot \sigma-q_{*}\right| \leq
2(\delta+\left\|r_{n}(\sigma)\right\|_{2})+o_t(1).\]
Recalling that $\sup_{\sigma \in \Sigma_n}
\norm{r_n(\sigma)}_2\to 0$ almost surely, and taking $t \geq t_0$ for
sufficiently large $t_0(\delta,\beta,\mu_D) \geq 1$, this is less than
$3\delta$ almost surely for all large $n$.
\end{proof}

\subsection{Self-averaging of the restricted free energy}\label{selfavgONES}

Next, we show that the restricted free energy $n^{-1}\log Z_B(m^t,\delta)$
concentrates around its conditional mean.

\begin{Lemma}\label{Claim9}
Fix any $\delta,\varepsilon>0$.
Under \Cref{as}, for any $t \geq 1$ and a constant
$\tilde{C}=\tilde{C}(\beta,\mu_D)>0$, almost surely for all large $n$,
\begin{equation}\label{condself1}
\begin{gathered}
\mathbb{P}\left[\left|\frac{1}{n} \log Z_{B}\left(m^{t},
\delta\right)-\frac{1}{n} \mathbb{E}\left[\log Z_{B}\left(m^{t}, \delta\right)
\mid \mathcal{G}_{t}\right]\right| \leq \varepsilon \;\bigg|\;
\mathcal{G}_{t}\right]
\geq 1-2 \exp \left(-\tilde{C} \varepsilon^{2} n\right).
\end{gathered}
\end{equation}
\end{Lemma}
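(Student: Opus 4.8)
The plan is to condition on $\mathcal{G}_t$, extract the residual Haar randomness in $O$, and apply concentration of measure on $\mathbb{SO}(n-O(t))$ to the map $O\mapsto\tfrac1n\log Z_B(m^t,\delta)$, which I will argue is Lipschitz with an $O(1)$ constant.

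Given $\mathcal{G}_t$, the iterate $m^t$ and hence the index set $\operatorname{Band}(m^t,\delta)$ are deterministic (and $\operatorname{Band}(m^t,\delta)$ is nonempty for all large $n$ --- e.g.\ because flipping one coordinate of $\sigma$ moves $m^t\cdot\sigma/n$ by at most $2/n$ while $\sigma=\pm\operatorname{sign}(m^t)$ straddle $\|m^t\|_2^2/n$, or simply because $\tfrac1n\log\mathbb{E}[Z_B(m^t,\delta)\mid\mathcal{G}_t]\to\Psi_{\mathrm{RS}}>-\infty$ by \Cref{Claim7}), so that $Z_B(m^t,\delta)=\sum_{\sigma\in\operatorname{Band}(m^t,\delta)}\exp(\tfrac12\sigma^\top O^\top\bar{D}O\,\sigma+h^\top\sigma)$ is a function of $O$ alone. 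By the conditioning lemma \Cref{conditionO} (cf.\ Appendix~\ref{appendix:SOn}), the conditional law of $O$ given $\mathcal{G}_t$ is that of an affine, $1$-Lipschitz (in Frobenius norm) image $O=A+B\,\tilde{O}\,C$ of an independent $\tilde{O}\sim\operatorname{Haar}(\mathbb{SO}(n-k_t))$, with $A,B,C$ being $\mathcal{G}_t$-measurable, $\normop{B},\normop{C}\le1$, and $k_t\le 2t$ (the iterates $x^1,\dots,x^t,y^1,\dots,y^t$ constrain $O$ only on a subspace of dimension $\le 2t$).

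For the Lipschitz estimate, set $F(O):=\tfrac1n\log Z_B(m^t,\delta)$. Using $O^\top\bar DO-O'^\top\bar DO'=O^\top\bar D(O-O')+(O-O')^\top\bar DO'$, $\|\sigma\|_2^2=n$, and $\normop{O}=\normop{O'}=1$, each exponent changes by at most $\normop{\bar D}\,n\,\normop{O-O'}\le\normop{\bar D}\,n\,\|O-O'\|_{\mathrm F}$; since $f\mapsto\log\sum_\sigma e^{f(\sigma)}$ is $1$-Lipschitz in $\sup_\sigma f(\sigma)$, this gives $|F(O)-F(O')|\le\normop{\bar D}\,\|O-O'\|_{\mathrm F}$, and composing with $\tilde O\mapsto O$ makes $\tilde O\mapsto F$ an $L$-Lipschitz function with $L=\normop{\bar D}=\beta\normop{D}\le C(\beta,\mu_D)$ for all large $n$ by \Cref{as}\ref{Dsupport}. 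The restriction to the band enters only through the deterministic (given $\mathcal{G}_t$) index set of the sum, so it does not affect this bound --- which is why the argument runs exactly as the self-averaging bound for the unrestricted $\tfrac1n\log Z$ in \cite{fan2021replica}. Now invoke the concentration inequality for $\operatorname{Haar}(\mathbb{SO}(m))$: an $L$-Lipschitz (Frobenius-metric) function concentrates around its mean at rate $2\exp(-c\,m\,s^2/L^2)$ with $c$ independent of $m$ (the Ricci curvature of $\mathbb{SO}(m)$ is at least $(m-2)/4$, giving a dimension-free logarithmic Sobolev/concentration constant; see e.g.\ \cite{ledoux1999concentration}). With $m=n-k_t\ge n/2$ for $n$ large relative to $t$, $L\le C(\beta,\mu_D)$, and $\mathbb{E}[F\mid\mathcal{G}_t]=\tfrac1n\mathbb{E}[\log Z_B(m^t,\delta)\mid\mathcal{G}_t]$ (as $\tilde O$ is independent of $\mathcal{G}_t$), this is exactly \eqref{condself1} with $\tilde C=\tilde C(\beta,\mu_D)>0$.

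The only nonroutine step is the first one --- describing the conditional law of $O$ given $\mathcal{G}_t$ as an affine $1$-Lipschitz image of a Haar matrix on a group of dimension $n-O(t)$, which is still $\Theta(n)$ for fixed $t$ --- and this is precisely what the conditioning machinery of Appendix~\ref{appendix:SOn} provides, adapting the $\mathbb{O}(n)$ conditioning of \cite{fan2021replica} to $O\sim\operatorname{Haar}(\mathbb{SO}(n))$. The Lipschitz bound and the compact-group concentration inequality are both standard.
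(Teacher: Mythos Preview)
Your proposal is correct and follows essentially the same approach as the paper: condition on $\mathcal{G}_t$ to reduce the remaining randomness to $\tilde O\sim\Haar(\SO(n-2t))$ via \Cref{conditionO}, bound the Lipschitz constant of $\tilde O\mapsto \tfrac1n\log Z_B$ by $O(\|\bar D\|_{\mathrm{op}})$, and apply Gromov's concentration on the special orthogonal group. The only cosmetic difference is that the paper computes $\partial F/\partial\tilde O$ explicitly and bounds its Frobenius norm by $4\|\bar D\|_{\mathrm{op}}$, whereas you bound the Lipschitz constant of $F$ in $O$ directly and then compose with the $1$-Lipschitz map $\tilde O\mapsto O$; both routes yield the same conclusion.
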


\begin{proof}
Let
\[X=X_t:=(x^1,\ldots,x^t), \qquad Y=Y_t:=(y^1,\ldots,y^t), \qquad
S=S_t:=(s^1,\ldots,s^t)\]
collect the AMP iterates \eqref{AMP}. Conditional on $\cG_t$, the law 
of $O \sim \Haar(\SO(n))$ is conditioned on the event $(S,\Lambda S)=O(X,Y)$.
Since $\Delta_t$ is non-singular, \Cref{AMPSE} implies that $(X,Y)$ has full
column rank $2t$ almost surely for all large $n$.
Then, applying \Cref{conditionO}, we have the conditional equality in law
\[O|_{\cG_t}\overset{L}{=}
V_{(S,\Lambda S)^\perp} \tilde{O}V_{(X,Y)^\perp}^\top
+(S, \Lambda S)\begin{pmatrix}
X^{\top} X & X^{\top} Y \\
Y^{\top} X & Y^{\top} Y \end{pmatrix}^{-1}(X, Y)^{\top}\]
where $V_{(X, Y)^{\perp}}$ and $V_{(S,\Lambda S)^\perp}$ have orthonormal
columns orthogonal to column spans of $(X, Y)$ and $(S, \Lambda S)$, and
$\tilde{O} \sim \operatorname{Haar}(\mathbb{SO}(n-2t))$.
Let us abbreviate $V=V_{(S,\Lambda S)^\perp}$, and define for each $\sigma \in
\Sigma_n$
\begin{equation*}
\begin{aligned}
\sigma_{\perp} =V_{(X, Y)^{\perp}}^{\top} \sigma \in \mathbb{R}^{n-2 t},
\qquad \sigma_{\|} =(S, \Lambda S)\begin{pmatrix}
X^{\top} X & X^{\top} Y \\
Y^{\top} X & Y^{\top} Y
\end{pmatrix}^{-1}(X, Y)^{\top} \sigma \in \mathbb{R}^{n}.
\end{aligned}
\end{equation*}
Then, recalling that the event $\{\sigma \in \operatorname{Band}(m^t,\delta)\}$
is $\cG_t$-measurable, the conditional law of $n^{-1} \log Z_B(m^t,\delta)$ is
given by
\begin{equation*}
    \begin{gathered}
\frac{1}{n} \log Z_{B}\left(m^{t}, \delta\right) \bigg|_{\mathcal{G}_{t}}
\stackrel{L}{=} \frac{1}{n} \log \sum_{\sigma \in \Sigma_{n}} \exp \left(\left(V
\tilde{O} \sigma_{\perp}+\sigma_{\|}\right)^{\top} \bar{D}\left(V \tilde{O}
\sigma_{\perp}+\sigma_{\|}\right)+h^{\top} \sigma\right) \mathbb{I}(\sigma
\left.\in \operatorname{Band}\left(m^{t}, \delta\right)\right).
\end{gathered}
\end{equation*}

We view the right side as a function of $\tilde{O} \in \R^{(n-2t) \times
(n-2t)}$,
and denote this function as $F(\tilde{O})=\frac{1}{n} \log Z(\tilde{O})$.
Then
\begin{equation}\label{103oo}
\begin{gathered}
\frac{\partial}{\partial \tilde{O}} F(\tilde{O})=\frac{1}{n}
\frac{1}{Z(\tilde{O})} \sum_{\sigma \in \operatorname{Band}\left(m^{t}, \delta\right)}\left[2 \sigma_{\perp} \sigma_{\perp}^{\top} \tilde{O}^{\top} V^{\top} \bar{D} V+2 \sigma_{\perp} \sigma_{\|}^{\top} \bar{D} V\right] \\
\hspace{2in}\times \exp \left(\left(V \tilde{O}
\sigma_{\perp}+\sigma_{\|}\right)^{\top} \bar{D}\left(V \tilde{O}
\sigma_{\perp}+\sigma_{\|}\right)+h^{\top} \sigma\right).
\end{gathered}
\end{equation} 
Note that $\left\|\sigma_{\|}\right\|_{2} \leq \sqrt{n}$ and 
$\left\|\sigma_{\perp}\right\|_{2} \leq \sqrt{n}$, as these are the norms of the
projections of $\sigma$ onto and orthogonal to the column span of
$(S,\Lambda S)$. We then have
\begin{equation}\label{lip104}
\begin{aligned}
\left\|\frac{\partial}{\partial \tilde{O}} F(\tilde{O})\right\|_{\mathrm{F}} &\le \frac{1}{n} \max _{\sigma \in \Sigma_n}\left\|2 \sigma_{\perp} \sigma_{\perp}^{\top} \tilde{O}^{\top} V^{\top} \bar{D} V+2 \sigma_{\perp} \sigma_{\|}^{\top} \bar{D} V\right\|_{\mathrm{F}} \\
&\leq \frac{2}{n} \max _{\sigma \in \Sigma_n}\left\|\sigma_{\perp}
\sigma_{\perp}^{\top} \tilde{O}^{\top} V^{\top} \bar{D}
V\right\|_{\mathrm{F}}+\frac{2}{n} \max _{\sigma \in
\Sigma_n}\left\|\sigma_{\perp} \sigma_{\|}^{\top} \bar{D} V\right\|_{\mathrm{F}}
\le 4\|\bar{D}\|_{\mathrm{op}},
\end{aligned}
\end{equation}
the last inequality using $\norm{AB}_F \le \normop{A} \norm{B}_F$.
Now applying Gromov's concentration inequality
\cite[Theorem~4.4.27]{anderson2010introduction} using \eqref{lip104}, we
have for any $\varepsilon>0$,
\begin{equation}\label{Gromov1}
	\begin{aligned}
		\mathbb{P}\bigg[ \bigg| F(\tilde{O})-\mathbb{E}(&F( \tilde{O}) )
\bigg| \leq \varepsilon \bigg] \geq 1-2 \exp
\left(-\frac{\left(\frac{n+2}{4}-1\right)
\varepsilon^{2}}{2\left(4\|\bar{D}\|_{\mathrm{op}}\right)^{2}}\right).
	\end{aligned}
\end{equation}
\Cref{as}\ref{Dsupport} implies that $\|\bar{D}\|_{\mathrm{op}} \leq
\tilde{C}(\beta,\mu_D)$ for some $(\beta,\mu_D)$ dependent constant,
almost surely for all large $n$, completing the proof.
\end{proof}

\begin{proof}[Proof of \Cref{Claim6}]
We apply a conditional second moment argument. Fix $\delta,\varepsilon>0$,
consider $t \geq t_0$ for $t_0=t_0(\delta,\varepsilon,\beta,\mu_D)$ sufficiently
large, and write $Z_{B}=Z_{B}\left(m^{t}, \delta\right)$.
From \Cref{Claim7}, 
almost surely for all large $n$,
\begin{align}
    \frac{1}{n} \log \mathbb{E}\left[Z_{B} \mid \mathcal{G}_{t}\right] \leq
\Psi_{\mathrm{RS}}+\varepsilon,\label{star11}\\
    \frac{1}{n} \log \frac{\mathbb{E}\left[Z_{B} \mid \mathcal{G}_{t}\right]}{2}
\geq \Psi_{\mathrm{RS}}-\varepsilon.\label{star22}
\end{align}
By \eqref{star11} and Jensen's inequality, we have the upper bound
\begin{equation}\label{upperb99}
\frac{1}{n} \mathbb{E}\left[\log Z_{B} \mid \mathcal{G}_{t}\right] \leq
\frac{1}{n} \log \mathbb{E}\left[Z_{B} \mid \mathcal{G}_{t}\right] \leq
\Psi_{\mathrm{RS}}+\varepsilon.
\end{equation}

For the complementary lower bound, 
it follows from \eqref{star22} and the Paley-Zygmund inequality that
\begin{align}
\mathbb{P}\left[\frac{1}{n} \log Z_{B} \geq \Psi_{\mathrm{RS}}-\varepsilon
\;\bigg|\; \mathcal{G}_{t}\right] &\geq \mathbb{P}\left[\frac{1}{n} \log Z_{B}
\geq \frac{1}{n} \log \frac{\mathbb{E}\left[Z_{B} \mid
\mathcal{G}_{t}\right]}{2} \;\bigg|\; \mathcal{G}_{t}\right]\nonumber\\
&=\mathbb{P}\left[Z_{B} \geq \frac{\mathbb{E}\left[Z_{B} \mid
\mathcal{G}_{t}\right]}{2} \;\bigg|\; \mathcal{G}_{t}\right]
\geq \frac{\mathbb{E}\left[Z_{B} \mid \mathcal{G}_{t}\right]^{2}}{4
\mathbb{E}\left[Z_{B}^{2} \mid \mathcal{G}_{t}\right]}.\label{eq:PZ}
\end{align}
\Cref{Claim9} shows
\begin{equation}\label{selfavgcite}
\begin{gathered}
\mathbb{P}\left[\left|\frac{1}{n} \log Z_{B}-\frac{1}{n} \mathbb{E}\left[\log
Z_{B} \mid \mathcal{G}_{t}\right]\right| \leq \varepsilon \;\bigg|\;
\mathcal{G}_{t}\right] \geq 1-2 \exp \left(-\tilde{C} \varepsilon^{2} n\right)
\end{gathered}
\end{equation}
for a constant $\tilde{C}=\tilde{C}(\beta,\mu_D)$. Applying
$\mathbb{E}\left[Z_{B}^{2} \mid \mathcal{G}_{t}\right] \leq \mathbb{E}\left[Z^{2} \mid \mathcal{G}_{t}\right]$
and \eqref{secondm}, 
almost surely for all large $n$, 
\begin{equation}\label{116ha}
\frac{1}{n} \log \mathbb{E}\left[Z_{B}^{2} \mid \mathcal{G}_{t}\right] \leq 2
\Psi_{\mathrm{RS}}+\frac{\tilde{C} \varepsilon^{2}}{100}.
\end{equation}
Applying again \Cref{Claim7}, 
\begin{equation}\label{thislb}
\frac{1}{n} \log \frac{\mathbb{E}\left[Z_{B} \mid \mathcal{G}_{t}\right]}{2}
\geq \Psi_{\mathrm{RS}}-\frac{\tilde{C} \varepsilon^{2}}{100}
\end{equation}
Then applying \eqref{116ha} and \eqref{thislb} to \eqref{eq:PZ}, we obtain
\[\mathbb{P}\left[\frac{1}{n} \log Z_{B} \geq \Psi_{\mathrm{RS}}-\varepsilon
\;\bigg|\; \mathcal{G}_{t}\right]
\geq \frac{1}{4} \exp \left(-n\left(\frac{1}{n} \log
\mathbb{E}\left[Z_{B}^{2} \mid \mathcal{G}_{t}\right]-\frac{2}{n}
\log \mathbb{E}\left[Z_{B} \mid
\mathcal{G}_{t}\right]\right)\right)>2 \exp \left(-\tilde{C} \varepsilon^{2}
n\right),\]
the last inequality holding for sufficiently large $n$. Combining this with
\eqref{selfavgcite},
we must have almost surely for all large $n$ that
\begin{equation}\label{lowerb99}
\frac{1}{n} \mathbb{E}\left[\log Z_{B} \mid \mathcal{G}_{t}\right] \geq
\Psi_{\mathrm{RS}}-2 \varepsilon.
\end{equation}
Finally, combining the lower bound \eqref{lowerb99}, the
upper bound \eqref{upperb99}, and the conditional self-averaging result
of \Cref{Claim9}, we complete the proof.
\end{proof}

\section{Restricting to orthogonal replicas}\label{sec:orthogonal}

We now consider two independent replicas $(\sigma,\tau) \in \Sigma_n^2$.
For any $m\in [-1,1]^n$ and any $\delta,\eta>0$, define
\begin{equation}\label{Bcdef}
B_{c}(m,\delta,\eta):=\left\{(\sigma, \tau) \in \operatorname{Band}\left(m,
\delta\right):\left|\frac{\left\langle\sigma-m,
\tau-m\right\rangle}{n}\right|>\eta\right\} \subset
\operatorname{Band}(m,\delta) \times \operatorname{Band}(m,\delta).
\end{equation}
These are configurations $(\sigma,\tau)$ belonging to the band centered at
$m$, for which $\sigma-m$ and $\tau-m$ are \emph{not} nearly orthogonal. Define
the corresponding restricted partition function
\[Z_c(m,\delta,\eta)=\sum_{(\sigma,\tau) \in B_c(m,\delta,\eta)}
\exp(H(\sigma)+H(\tau)).\]
Later, we may also use shorthand $B_c=B_c(m^t,\delta,\eta)$ and
$Z_c=Z_c(m^t,\delta,\eta)$. In this section, we establish that for $\eta>3\delta$,
$Z_c(m^t,\delta,\eta)$ is exponentially smaller than
$Z_B(m^t,\delta)^2 \doteq \exp(n \cdot
2\Psi_{\mathrm{RS}})$, for large $n$ and large $t$.
Consequently, two independent replicas $(\sigma,\tau)$ drawn from this band
are such that $\sigma-m^t$ and $\tau-m^t$ are nearly orthogonal
with high probability.

\begin{Lemma}\label{Claim3}
Fix any $\delta,\eta>0$ where $\eta>3\delta$.
In the setting of \Cref{mainthm_informal}, there exists some
$t_{0}=t_{0}\left(\eta, \beta, \mu_{D}\right) \geq 1$ and an absolute constant
$c>0$ such that for any fixed $t \geq t_{0}$, almost surely for all large $n$, 
\begin{equation*}
\begin{aligned}
\frac{1}{n} \log Z_c(m^t,\delta,\eta) 
\leq 2\Psi_{\mathrm{RS}}-c \beta^{\frac{1}{2}} \eta^2.
\end{aligned}
\end{equation*}
\end{Lemma}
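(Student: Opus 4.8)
The plan is to bound $Z_c(m^t,\delta,\eta)$ by a conditional first moment computation followed by Markov's inequality; in contrast to the proof of \Cref{Claim6}, no second moment estimate is needed here, since we only seek an upper bound. As the set $B_c(m^t,\delta,\eta)$ is $\cG_t$-measurable, I would first write
\[
\E\big[Z_c(m^t,\delta,\eta)\mid\cG_t\big]=\sum_{(\sigma,\tau)\in B_c(m^t,\delta,\eta)}\exp\big(h^\top\sigma+h^\top\tau\big)\cdot\E\Big[\exp\big(\tfrac12\sigma^\top\bar{J}\sigma+\tfrac12\tau^\top\bar{J}\tau\big)\;\Big|\;\cG_t\Big].
\]
The inner conditional expectation is precisely the two-replica conditional moment object underlying \eqref{secondm} in \cite[Section~4]{fan2021replica}, and it is evaluated to leading exponential order by the same Haar-integral asymptotics (using the $\SO(n)$ modifications of \Cref{conditionO}, as in Appendix~\ref{appendix:SOn}). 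This reduces the estimate to a Varadhan-type computation for the joint large deviations, under $(\sigma,\tau)\sim\mathrm{Unif}(\Sigma_n)^2$, of the single-replica overlap functionals of \eqref{uvwlow} for each of $\sigma$ and $\tau$ together with the cross-overlap $\rho:=\tfrac1n\langle\sigma,\tau\rangle$, now maximized over the region cut out by the two band constraints and the non-orthogonality constraint $|\tfrac1n\langle\sigma-m^t,\tau-m^t\rangle|>\eta$.

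The heart of the argument is a quantitative concavity estimate in the cross-overlap. Expanding $\tfrac1n\langle\sigma-m^t,\tau-m^t\rangle=\rho-\tfrac1n\langle\sigma,m^t\rangle-\tfrac1n\langle\tau,m^t\rangle+\tfrac1n\|m^t\|_2^2$, and using $\sigma,\tau\in\operatorname{Band}(m^t,\delta)$ together with $\tfrac1n\|m^t\|_2^2=q_*+o_t(1)$ (from the AMP state evolution, cf.\ \eqref{ytysSE}), we see that each subtracted term lies within $\delta+o_t(1)$ of $q_*$, so the constraint forces $|\rho-q_*|>\eta-2\delta-o_t(1)\ge\tfrac13\eta-o_t(1)$ since $\eta>3\delta$. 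On the other hand, the within-band two-replica variational objective obtained above is maximized, to leading order, at the replica-symmetric configuration with $\rho=q_*$, where it equals $2\Psi_{\mathrm{RS}}$ (the within-band analog of \Cref{Claim7}, consistent with \eqref{secondm}); and in the high-temperature regime this objective is globally strictly concave as a function of $\rho$, with second derivative bounded above by a negative constant uniformly in the remaining parameters. This global concavity is exactly the feature of the conditional second-moment variational formula of \cite{fan2021replica} that is available for small $\beta$. Consequently, displacing $\rho$ by $\gtrsim\eta$ from $q_*$ decreases the objective by at least a constant multiple of $\eta^2$, and tracking the $\beta$-dependence gives, for an absolute constant $c>0$ and almost surely,
\[
\frac1n\log\E\big[Z_c(m^t,\delta,\eta)\mid\cG_t\big]\le 2\Psi_{\mathrm{RS}}-2c\beta^{1/2}\eta^2+o_t(1),
\]
where here $o_t(1)$ comes only from the AMP state evolution (the convergence of $\tfrac1n\|m^t\|_2^2$ and of the quantities entering $\tilde v_*,w_*$ to their limits) and does not depend on $\delta$.

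Finally I would fix $t_0=t_0(\eta,\beta,\mu_D)$ large enough that $o_t(1)\le c\beta^{1/2}\eta^2$ for all $t\ge t_0$, giving $\E[Z_c(m^t,\delta,\eta)\mid\cG_t]\le\exp\big(n(2\Psi_{\mathrm{RS}}-c\beta^{1/2}\eta^2)\big)$ almost surely; then Markov's inequality conditional on $\cG_t$ yields
\[
\PP\Big[\tfrac1n\log Z_c(m^t,\delta,\eta)\ge 2\Psi_{\mathrm{RS}}-\tfrac12 c\beta^{1/2}\eta^2\;\Big|\;\cG_t\Big]\le\exp\big(-\tfrac12 c\beta^{1/2}\eta^2 n\big),
\]
and, the right-hand side being summable in $n$, the Borel--Cantelli lemma gives the claimed bound almost surely for all large $n$ (after renaming $\tfrac12 c$ as $c$). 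The main obstacle is the variational estimate of the second paragraph: carrying out the two-replica conditional moment computation with the additional cross-overlap bookkeeping (an extension of \cite[Section~4]{fan2021replica}), and---more essentially---establishing uniform strict concavity of the objective in $\rho$ at high temperature with enough quantitative control on the curvature to produce the $\beta^{1/2}\eta^2$ loss, verifying in particular that the infimum over the auxiliary parameter $\gamma>\bar{d}_+$ and the optimization over the dual variables neither destroy concavity nor degrade the curvature uniformly over the admissible parameter range.
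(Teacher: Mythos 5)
Your proposal is correct and follows the same overall template as the paper: bound $Z_c$ through a conditional first-moment/large-deviations computation (an extension of \cite[Section~4]{fan2021replica} with the extra cross-overlap bookkeeping, using the $\SO(n)$ conditioning of \Cref{conditionO}), reduce the non-orthogonality constraint to $|\rho-q_*|\gtrsim\eta$ using the band constraints and $\tfrac1n\|m^t\|_2^2\to q_*$, and then exploit the strong concavity of the two-replica variational objective (second derivative $\prec -\beta^{1/2}I$ at high temperature, from \cite[Lemma~4.5]{fan2021replica}) together with near-stationarity at the RS point to obtain the $-c\beta^{1/2}\eta^2$ deficit. One small imprecision: the paper's concavity estimate is joint in all variational coordinates $(p,v,w,\ell,m)$, not just $\rho$, and the argument uses this joint curvature bound together with $\|\nabla\widetilde\Phi_{2,t}(x_*)\|=o_t(1)$; your phrase ``concave in $\rho$ uniformly in the remaining parameters'' should be read in that spirit.

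The genuinely different step is the last one. The paper proves a Gromov-type concentration inequality for $\tfrac1n\log Z_c$ around its conditional mean (\Cref{Claim11}, via a Lipschitz estimate on $\tilde O\mapsto F(\tilde O)$ and \cite[Theorem~4.4.27]{anderson2010introduction}), and then combines this with Jensen's inequality and \Cref{Claim4}. You instead apply Markov's inequality conditionally on $\cG_t$ and conclude by Borel--Cantelli. This is a real simplification: since only an \emph{upper} bound on $\tfrac1n\log Z_c$ is needed, Markov suffices, and the two-replica Gromov lemma (\Cref{Claim11}) can be bypassed entirely. (The paper still needs the one-replica analogue, \Cref{Claim9}, in the proof of \Cref{Claim6}, where a two-sided statement and a Paley--Zygmund lower bound are required; there Markov alone would not do.) The one point to spell out in your Borel--Cantelli step is that the conditional Markov bound $\PP[B_n\mid\cG_t]\le e^{-\frac{c}{2}\beta^{1/2}\eta^2 n}$ is only valid on the $\cG_t$-measurable event $A_n:=\{\E[Z_c\mid\cG_t]\le e^{n(2\Psi_{\mathrm{RS}}-c\beta^{1/2}\eta^2)}\}$, which by \Cref{Claim4} holds eventually almost surely but whose complement probability need not be summable. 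The fix is standard: since $\limsup_n B_n\cap\liminf_n A_n\subseteq\limsup_n(B_n\cap A_n)$ and $\PP[B_n\cap A_n]=\E[\mathbb I_{A_n}\PP[B_n\mid\cG_t]]\le e^{-\frac{c}{2}\beta^{1/2}\eta^2 n}$, Borel--Cantelli applied to $\{B_n\cap A_n\}$ plus $\PP[\liminf_n A_n]=1$ gives the claim. This is a one-line adjustment and does not affect the substance of your argument.
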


The strategy of proof is similar to that of \Cref{Claim6}: We show this result
first for the conditional moment $\E[Z_c(m^t,\delta,\eta) \mid \cG_t]$ in
Section \ref{sec:firstmomentorthog}, and then use concentration of $\log
Z_c(m^t,\delta,\eta)$ around its conditional mean in Section
\ref{self2replica} to conclude the proof.

\subsection{Conditional first moment restricted to non-orthogonal replicas}
\label{sec:firstmomentorthog}

This section establishes the following lemma, by an extension of arguments in
\cite[Section 4]{fan2021replica}.

\begin{Lemma}\label{Claim4}
Fix any $\delta,\eta>0$ where $\eta>3\delta$.
In the setting of \Cref{mainthm_informal}, there exists some
$t_{0}=t_{0}\left(\eta, \beta, \mu_{D}\right) \geq 1$ and an absolute constant
$c>0$ such that for any fixed $t \geq t_{0}$, almost surely
\begin{equation}\label{C4eqq}
\begin{aligned}
\limsup_{n \rightarrow \infty} \frac{1}{n} \log
\mathbb{E}\left[Z_{c}(m^t,\delta,\eta) \;\bigg|\; \mathcal{G}_{t}\right] \leq
2 \Psi_{\mathrm{RS}}-c\beta^{\frac{1}{2}} \eta^2.
\end{aligned}
\end{equation}
\end{Lemma}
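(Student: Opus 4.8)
The plan is to extend the conditional second-moment computation of \cite[Section~4]{fan2021replica} to the pair of replicas localized to $B_c(m^t,\delta,\eta)$, and then to read off a quantitative deficit from a globally concave majorant of the resulting variational formula. First I would condition on $\cG_t$ and apply \Cref{conditionO} exactly as in the proof of \Cref{Claim7}, so that $\E[\exp(\tfrac12\sigma^\top\bar J\sigma+\tfrac12\tau^\top\bar J\tau)\mid\cG_t]$ becomes an $\Haar(\SO(n-2t))$ spherical integral whose exponential rate is a deterministic function of a finite set of overlap order parameters of $(\sigma,\tau)$: the field overlaps $\tfrac1n h^\top\sigma,\tfrac1n h^\top\tau$, the projections $\tfrac1n(X,Y)^\top\sigma,\tfrac1n(X,Y)^\top\tau$ onto the AMP iterates, and crucially the mutual overlap $q_{12}:=\tfrac1n\sigma^\top\tau$. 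Following \cite[Lemma~4.1]{fan2021replica} (with the $\SO(n)$ bookkeeping of Appendix~\ref{appendix:SOn}), $\tfrac1n\log\E[Z_c\mid\cG_t]$ then equals, up to $o_n(1)$, the supremum of an entropy-plus-energy functional over these parameters restricted to the region carved out by $B_c(m^t,\delta,\eta)$; this supremum can be written as a sup--inf of the two-replica variational function $\Phi_{2,t}$ of \cite[Section~4]{fan2021replica} with the $q_{12}$-coordinate constrained.

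Next I would translate the geometric constraint into the order-parameter language via a two-replica version of the argument behind \Cref{Claim5}: the conditions $\sigma,\tau\in\operatorname{Band}(m^t,\delta)$ force $\tfrac1n h^\top\sigma$, $\tfrac1n(X,Y)^\top\sigma$ and their $\tau$-counterparts to lie within $O(\delta)$ of their replica-symmetric values, and, since $\|m^t\|_2^2/n\to q_*$, they force $\tfrac1n\langle m^t,\sigma\rangle$ and $\tfrac1n\langle m^t,\tau\rangle$ to be within $\delta+o_n(1)$ of $q_*$; combined with $|\langle\sigma-m^t,\tau-m^t\rangle/n|>\eta$ this gives $|q_{12}-q_*|\ge\eta-C\delta\ge\eta/2$, which is where $\eta>3\delta$ enters (together with taking $t$ large enough that $\|m^t\|^2/n$ is close to $q_*$, so that $t_0$ depends only on $\eta,\beta,\mu_D$ and not on $\delta$). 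On the other hand, the \emph{unconstrained} supremum of the same functional equals $2\Psi_{\mathrm{RS}}$ by \cite[Lemma~4.1]{fan2021replica}, equivalently by \eqref{secondm}, and is attained at the orthogonal-replica saddle $q_{12}=q_*$ (vanishing centered overlap). So the lemma reduces to the claim that displacing $q_{12}$ away from $q_*$ by at least $\eta/2$ lowers the variational value by $\gtrsim\beta^{1/2}\eta^2$.

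To prove this I would exhibit a concave majorant of the restricted sup--inf: substituting a judiciously chosen value of the inner $\inf$-variables $(\gamma,U,V,W)$ into $\Phi_{2,t}$ produces an upper bound that is jointly concave in all the outer order parameters, valid for $\beta<\beta_0(\mu_D)$. The maximum of this majorant over its full domain is $2\Psi_{\mathrm{RS}}$, attained at $q_{12}=q_*$; and since the majorant is concave with a curvature bounded above by $-\kappa_\beta<0$ in the $q_{12}$-direction, a second-order Taylor expansion at the maximizer shows its value on $\{|q_{12}-q_*|\ge\eta/2\}$ is at most $2\Psi_{\mathrm{RS}}-\tfrac18\kappa_\beta\eta^2$. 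Bookkeeping how $\kappa_\beta$ depends on $\beta$ --- it is controlled from below by the $O(1)$ entropic curvature minus an energetic curvature that enters through $\bar R'(1-q_*)$, $\kappa_*$ and $\sigma_*^2$ (all of order $\beta^2$), but which must be deflated in order to preserve \emph{global} rather than merely local concavity --- gives $\kappa_\beta\gtrsim\beta^{1/2}$ and hence the stated bound $2\Psi_{\mathrm{RS}}-c\beta^{1/2}\eta^2$.

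The main obstacle is this last step: producing a \emph{globally} concave upper bound on the two-replica variational functional with a curvature lower bound of the correct order in $\beta$. The functional is only manifestly concave in a neighbourhood of the replica-symmetric saddle, there may be competing saddles at overlaps far from $q_*$, and the non-concave remainder has to be absorbed using the operator-norm smallness of $\bar J=\beta J$, in the same spirit as the concave-majorant arguments underlying \Cref{Claim7} in the single-replica setting, but now with the additional mutual-overlap coordinate $q_{12}$ to control; verifying that this extra coordinate does not destroy global concavity and that it carries a strictly negative curvature of order $\beta^{1/2}$ is the technical heart of the proof. The remaining pieces --- the spherical-integral asymptotics, the large-deviations/Varadhan step yielding the variational formula, and the two-replica analogue of \Cref{Claim5} --- are routine adaptations of \cite[Sections~3--4]{fan2021replica}.
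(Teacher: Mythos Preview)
Your proposal follows essentially the same route as the paper. Two corrections worth noting. First, the band condition $\sigma\in\operatorname{Band}(m^t,\delta)$ does \emph{not} force $\tfrac1n h^\top\sigma$ or $\tfrac1n(X,Y)^\top\sigma$ near their replica-symmetric values: it is a single scalar constraint on $\tfrac1n m^t\cdot\sigma$, not on the full $2t$-dimensional projection, so it cannot pin down $(v(\sigma),w(\sigma))$. The paper neither claims nor uses this. What it does use---and what you correctly derive---is only that $(\sigma,\tau)\in B_c$ implies $|p(\sigma,\tau)-q_*|>\eta/3+o_n(1)$; the indicator of $B_c$ is then relaxed to $\mathbb{I}(|p-q_*|\geq\eta/4)$, and Varadhan's lemma yields the sup of $\Phi_{2,t}$ over the \emph{full} domain in $(u,k,v,w,\ell,m)$ with only $p$ constrained away from $q_*$.

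Second, precisely because the other order parameters are unconstrained, concavity in the $q_{12}$-direction alone is not enough: you need joint strong concavity in all $4t+1$ outer coordinates, so that $|p-q_*|>\eta/5$ forces $\|x-x_*\|_2>\eta/5$ and hence a deficit. This is exactly what \cite[Lemma~4.5]{fan2021replica} already provides: a majorant $\widetilde{\Phi}_{2,t}$ with $\nabla^2\widetilde{\Phi}_{2,t}\prec-\beta^{1/2}I$ on all of $\mathcal{V}$, together with $\widetilde{\Phi}_{2,t}(x_*)=2\Psi_{\mathrm{RS}}+o_t(1)$ and $\|\nabla\widetilde{\Phi}_{2,t}(x_*)\|_2=o_t(1)$. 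So the ``main obstacle'' you identify is not an obstacle here---it is resolved by direct citation, and the $\beta^{1/2}$ scaling of the curvature is part of that cited result rather than something to be re-derived.
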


\begin{proof}
Fix $t \geq 1$.
For any $(\sigma,\tau) \in \Sigma_n^2$,
the event $\{(\sigma,\tau) \in B_c\}$ is $\cG_t$-measurable, so
\begin{equation}\label{vvv}
\begin{aligned}
\mathbb{E}\left[Z_{c} \mid \mathcal{G}_{t}\right] & = \sum_{(\sigma, \tau) \in \Sigma_{n}^{2}} \mathbb{I}\left((\sigma, \tau) \in B_{c}\right) \cdot \mathbb{E}\left[\exp (H(\sigma)+H(\tau)) \mid \mathcal{G}_{t}\right] \\
& = \sum_{(\sigma, \tau) \in B_{c}} \exp \left(h^{\top} \sigma+h^{\top} \tau+\frac{n}{2} \cdot f_{n}(\sigma, \tau)\right)
\end{aligned}
\end{equation}
where 
\begin{equation*}
f_{n}(\sigma, \tau):=\frac{2}{n} \log \mathbb{E}\left[\exp \left(\frac{1}{2}
\sigma^{\top} O^{\top} \bar{D} O \sigma+\frac{1}{2} \tau^{\top} O^{\top} \bar{D}
O \tau\right) \;\bigg|\; \mathcal{G}_{t}\right]
\end{equation*}

Let $X=X_t:=(x^1,\ldots,x^t)$ and $Y=Y_t:=(y^1,\ldots,y^t)$ collect the AMP
iterates \eqref{AMP} up to iteration $t$.
Following the proof of \cite[Lemma 4.2]{fan2021replica}, define the functionals
\begin{equation}\label{2repfunc}
\begin{aligned}
&u(\sigma)=\frac{1}{n} h^{\top} \sigma, \quad k(\tau)=\frac{1}{n} h^{\top} \tau,
\quad p(\sigma, \tau)=\frac{1}{n} \sigma^{\top} \tau,\\
&\mqty(v(\sigma)\\ w(\sigma))=\left[\frac{1}{n}\begin{pmatrix}
X^{\top} X & X^{\top} Y \\
Y^{\top} X & Y^{\top} Y
\end{pmatrix}\right]^{-\frac{1}{2}} \cdot \frac{1}{n}(X, Y)^{\top} \sigma, \\
&\mqty(\ell(\tau) \\
m(\tau))=\left[\frac{1}{n}\begin{pmatrix}
X^{\top} X & X^{\top} Y \\
Y^{\top} X & Y^{\top} Y
\end{pmatrix}\right]^{-\frac{1}{2}} \cdot \frac{1}{n}(X, Y)^{\top} \tau.
\end{aligned}
\end{equation}
Then the same argument as in \cite[Lemma 4.2]{fan2021replica} (with
modifications as discussed in Appendix \ref{appendix:SOn} to condition on
the law of $O \sim \Haar(\SO(n))$ instead of $O \sim \Haar(\O(n))$) shows
\begin{align*}
\frac{1}{n} \log \mathbb{E}\left[Z_{c} \mid \mathcal{G}_{t}\right] 
&=\log 4+\frac{1}{n} \log \bigg \langle \mathbb{I}((\sigma, \tau) \in
B_{c})  \exp \bigg(n\bigg[u(\sigma)+k(\sigma)\\
&\hspace{2in}+\frac{1}{2} \cdot
f(p(\sigma,\tau), v(\sigma), w(\sigma), \ell(\tau), m(\tau))\bigg]\bigg)
\bigg\rangle_{\mathrm{Unif}}+o_n(1)
\end{align*}
where $\langle\cdot\rangle_{\mathrm{Unif}}$ denotes the expectation under
the uniform distribution for $(\sigma,\tau) \in \Sigma_{n}^{2}$, and
$\lim_{n \to \infty} o_n(1)=0$ almost surely. Here, defining the domain
\begin{equation*}
\mathcal{V}:=\left\{p \in[-1,1],\,v, w, \ell, m \in \mathbb{R}^{t}: A(p, v, w, \ell, m)\succ0\right\}
\end{equation*}
where
\begin{equation*}
A(p, v, w, \ell, m):=\begin{pmatrix}
1-\|v\|^{2}_2-\|w\|^{2}_2 & p-v^{\top} \ell-w^{\top} m \\
p-v^{\top} \ell-w^{\top} m & 1-\|\ell\|^{2}_2-\|m\|^{2}_2 \end{pmatrix},
\end{equation*}
the function $f:\mathcal{V} \to \R$ is defined as
\begin{align*}
f(p, v, w, \ell, m)&:=\inf _{(\gamma, v, \rho) \in \mathcal{D}_{+}} 2 a_{*} \kappa_{*}^{-1 / 2}\left(v^{\top} w+\ell^{\top} m\right)
+\left(\lambda_{*}-\frac{a_{*}}{\kappa_{*}}\right)\left(\|w\|^{2}+\|m\|^{2}\right)\\
&\hspace{1in}+\operatorname{Tr} \mathcal{F}(\gamma, v, \rho) \times B(v, w, \ell, m)\\
&\hspace{1in}+\mathcal{H}\left(\gamma, v, \rho ; 1-\|v\|^{2}_2-\|w\|^{2}_2, p-v^{\top} \ell-w^{\top} m, 1-\|\ell\|^{2}_2-\|m\|^{2}_2 \right)
\end{align*}
where $\kappa_*=\lim_{n \to \infty} n^{-1}\Tr \Gamma^2$ is as defined in
\eqref{kappadeltastar}, $a_*=\bar{R}\left(1-q_{*}\right)$,
$\lambda_*=a_*+\frac{1}{1-q_{*}}$, and
\begin{equation*}
\mathcal{D}_{+}:=\left\{(\gamma, v, \rho) \in \mathbb{R}^{3}:\begin{pmatrix}
\gamma & v \\ v & \rho \end{pmatrix}\succ \bar{d}_{+} \cdot I_{2 \times 2}\right\}.
\end{equation*}
For any $(\sigma,\tau) \in \Sigma_n^2$, we have
$(p(\sigma, \tau), v(\sigma), w(\sigma), \ell(\tau), m(\tau)) \in
\bar{\mathcal{V}}$
where $\bar{\mathcal{V}}$ is the closure of $\mathcal{V}$ (using Gram matrices are positive semi-definite)). We also extend function $f(p,v,w,\ell,m)$ to $\bar{\mathcal{V}}$
by continuity. The explicit forms of the functions
$\mathcal{F},B,\mathcal{H}$ are discussed at the start
of \cite[Section 4.1]{fan2021replica}. 

Note that $(\sigma,\tau)\in B_c$ implies that
\begin{equation*}
\begin{gathered}
\left|\frac{1}{n} m^{t} \cdot\left(\sigma-m^{t}\right)\right|<\delta<\eta/3,
\,\,\,\,\,\, \left|\frac{1}{n} m^{t}
\cdot\left(\tau-m^{t}\right)\right|<\delta<\eta/3, \,\,\,\,\,\,
\left|\frac{\left\langle\sigma-m^{t}, \tau-m^{t}\right\rangle}{n}\right|>\eta
\end{gathered}
\end{equation*}
which in turn implies
\begin{align*}
\left|p(\sigma, \tau)-q_{*}\right| &\equiv\left|\frac{1}{n} \sigma \cdot \tau-q_{*}\right|\\
&=\bigg| \frac{1}{n}\left(\sigma-m^{t}\right) \cdot\left(\tau-m^{t}\right)+\frac{1}{n}\left(\sigma-m^{t}\right) \cdot m^{t}+\frac{1}{n}\left(\tau-m^{t}\right) \cdot m^{t}+\left(\frac{1}{n}\left\|m^{t}\right\|_{2}^{2}-q_{*}\right) \bigg |\\
&\geq\left|\frac{\left\langle\sigma-m^{t}, \tau-m^{t}\right\rangle}{n}\right|-\left|\frac{1}{n}\left(\sigma-m^{t}\right) \cdot m^{t}\right|-\left|\frac{1}{n}\left(\tau-m^{t}\right) \cdot m^{t}\right|-\left|\frac{1}{n}\left\|m^{t}\right\|_{2}^{2}-q_{*}\right|\\
&>\eta/3-\left|\frac{1}{n}\left\|m^{t}\right\|_{2}^{2}-q_{*}\right|=\eta/3
+o_n(1),
\end{align*}
the last equality using \eqref{mtlength} from the AMP state evolution. Let
$\tilde{B}_{c}:=\left\{(p, v, w, \ell, m) \in
\bar{\mathcal{V}}:\left|p-q_{*}\right| \geq \eta/4\right\}$. This implies,
almost surely for all large $n$,
\begin{equation*}
\mathbb{I}\left((\sigma, \tau) \in B_{c}\right) \leq \mathbb{I}\left((p(\sigma, \tau), v(\sigma), w(\sigma), \ell(\tau), m(\tau)) \in \tilde{B}_{c}\right).
\end{equation*}
So
\begin{equation}\label{138oo}
\begin{aligned}
\frac{1}{n}\log \E[Z_c \mid \cG_t]
&\leq \log 4+\frac{1}{n} \log \bigg \langle \mathbb{I}\left((p(\sigma, \tau), v(\sigma), w(\sigma), \ell(\tau), m(\tau)) \in \tilde{B}_{c}\right) \\
& \hspace{0.5in} \times \exp \bigg[n \bigg(u(\sigma)+k(\sigma)+\frac{1}{2} \cdot
f(p(\sigma), v(\sigma), w(\sigma), \ell(\tau), m(\tau))\bigg)\bigg] \bigg
\rangle_{\mathrm{Unif}}+o_n(1).
\end{aligned}
\end{equation}

For $(\sigma,\tau) \sim \operatorname{Unif}\left(\Sigma_{n}^2\right)$,
almost surely as $n \to \infty$, it is shown in \cite[Lemma 4.2]{fan2021replica}
that the vector $(u(\sigma),k(\tau),p(\sigma, \tau), v(\sigma), w(\sigma), \ell(\tau), m(\tau))$
satisfies a large deviation principle with good rate function
\begin{equation*}
\begin{aligned}
\lambda^{*}(u, k, p, v, w, \ell, m) 
&=\!\!\!\mathop{\sup_{U,K,P \in \R}}_{V,W,L,M \in \R^t}\!\!\!
U \cdot u+V^{\top} v+W^{\top} w+K \cdot k+L^{\top} \ell+M^{\top} m+P \cdot p
-\lambda(U, V, W, K, L, M, P).
\end{aligned}
\end{equation*}
Here, $\lambda(U,V,W,K,L,M,P)$ is the limiting cumulant generating function of
$(p(\sigma, \tau), v(\sigma), w(\sigma), \ell(\tau), m(\tau))$, given by
\begin{align}
\lambda(U,V,W,K,L,M,P)
&=\mathbb{E}\left[\mathcal { L } \left(P, U \cdot \mathsf{H}+V^{\top}
\Delta_{t}^{-1 / 2}\left(\mathsf{X}_{1}, \ldots,
\mathsf{X}_{t}\right)+\kappa_{*}^{-1 / 2} W^{\top} \Delta_{t}^{-1 /
2}\left(\mathsf{Y}_{1}, \ldots, \mathsf{Y}_{t}\right)\right.\right. \nonumber\\
&\left.\left.\quad K \cdot \mathsf{H}+L^{\top} \Delta_{t}^{-1 / 2}\left(\mathsf{X}_{1}, \ldots, \mathsf{X}_{t}\right)+\kappa_{*}^{-1 / 2} M^{\top} \Delta_{t}^{-1 / 2}\left(\mathsf{Y}_{1}, \ldots, \mathsf{Y}_{t}\right)\right)\right]-\log 4\label{limmm}
\end{align}
where $(\mathsf{X_1},\dots \mathsf{X_t})$, $(\mathsf{Y_1},\dots \mathsf{Y_t})$,
and $\Delta_t$ are defined in the AMP state evolution of \Cref{AMPSE}, and
\begin{equation*}
\mathcal{L}(x, y, z)=\log
\left[e^{x+y+z}+e^{x-y-z}+e^{-x+y-z}+e^{-x-y+z}\right].
\end{equation*}

We now apply Varadhan's Lemma to approximate \eqref{138oo}.
Let us make explicit a few details of this application of Varadhan's Lemma
that were omitted from \cite[Lemma 4.2]{fan2021replica}:
Using $\mathcal{L}(x, y, z) \leq \log 4+|x|+|y|+|z|$,
\Cref{AMPSE} on the law of
$(\mathsf{X}_1,\ldots,\mathsf{X}_t,\mathsf{Y}_1,\ldots,\mathsf{Y}_t)$, and
\Cref{as}\ref{hsupport} on the law of $\mathsf{H}$, it may be checked that
$\lambda(U,V,W,K,L,M,P)<\infty$.
Therefore, \cite[Assumption~2.3.2]{Dembo1998large} holds, so \cite[Lemma~2.3.9(a)]{Dembo1998large} and its proof show that $\lambda^*$ is indeed a
non-negative good rate function on the domain $(u,k,p,v,w,\ell,m) \in
\R^2 \times \bar{\mathcal{V}}$. Note also that $\lambda^*(0)=0$, because
$\mathcal{L}(x,y,z)$ is minimized at $(0,0,0)$
to be $\log 4$, so it follows that $\lambda^*(0)=-\inf_{U,V,W,K,L,M,P}
\lambda(U,V,W,K,L,M,P)=0$. Moreover, $f$ is bounded on the compact domain
$\bar{\mathcal{V}}$, and for any $c>1$,
\begin{equation}\label{finitebbb}
\begin{aligned}
\lim _{n \rightarrow \infty} \frac{1}{n} \log \left \langle e^{c n u(\sigma)+c n
k(\tau)}\right\rangle_{\mathrm{Unif}} &=\lim _{n \rightarrow \infty} \frac{1}{n} \log \left \langle e^{c \cdot h^{\top} \sigma+c \cdot h^{\top} \tau}\right\rangle_{\mathrm{Unif}} \\
&=\lim _{n \rightarrow \infty} \frac{2}{n} \sum_{i=1}^{n} \log \cosh \left(c h_{i}\right)=2 \mathbb{E}[\log \cosh (c \mathsf{H})] <\infty.
\end{aligned}
\end{equation}
These imply that the moment condition of
\cite[Eq.\ (4.3.3)]{Dembo1998large} holds for the function $(u,k,p,v,w,\ell,m)
\mapsto u+k+f(p,v,w,\ell,m)/2$, almost surely as $n \to \infty$. Then by
Varadhan's Lemma, see e.g.\ \cite[Lemma~3.4.8]{Dembo1998large} and
\cite[Exercise~4.3.11]{Dembo1998large}, since the set
$\R^2 \times \tilde{B}_c$ is closed,
\begin{equation}\label{146oo}
\begin{aligned}
&\limsup _{n \rightarrow \infty} \frac{1}{n} \log \bigg \langle \mathbb{I}\left((p(\sigma, \tau), v(\sigma), w(\sigma), \ell(\tau), m(\tau)) \in \tilde{B}_{c}\right) \\
&\hspace{0.5in} \times \exp \left(n\left[u(\sigma)+k(\sigma)+\frac{1}{2} f(p(\sigma,
\tau), v(\sigma), w(\sigma), \ell(\tau), m(\tau))\right]\right) \bigg \rangle_{\mathrm{Unif}} \\
&\leq \sup _{\substack{u, k \in \mathbb{R}\\ (p, v, w, \ell, m) \in
\tilde{B}_{c}}} u+k+\frac{1}{2} f(p, v, w, \ell, m)-\lambda^{*}(u, k, p, v, w,
\ell, m)\\
&\leq \quad \sup _{\substack{u, k \in \mathbb{R}\\ (p, v, w, \ell, m) \in
\bar{\mathcal{V}} \cap\left\{\left|p-q_{*}\right|>\frac{\eta}{5}\right\}}}
u+k+\frac{1}{2} f(p, v, w, \ell, m)-\lambda^{*}(u, k, p, v, w, \ell, m).
\end{aligned}
\end{equation}
In the last line, we upper-bounded the supremum over
$\tilde{B}_c=\bar{\mathcal{V}} \cap \{|p-q_*| \geq \frac{\eta}{4}\}$
by that over $\bar{\mathcal{V}} \cap \{|p-q_*|>\frac{\eta}{5}\}$.

We now claim that this equals the supremum over the interior $\mathcal{V} \cap
 \{|p-q_*|>\frac{\eta}{5}\}$.
To see this, note that $f$ is continuous on the compact set
$\bar{\mathcal{V}}$. For $\lambda^*$, recall that
$\lambda^{*}(u, k, p, v, w, \ell, m)$ is convex, lower
semi-continuous, and nonnegative on its domain $\R^2 \times \bar{\mathcal{V}}$,
and $\lambda^*(0)=0$. Then $\lambda^*$ is a proper, closed convex function
(see e.g.\ \cite[Sections~4 and 7]{rockafellar2015convex}). Consider an
arbitrary point $x_{0}:=\left(u_{0}, k_{0}, p_{0}, v_{0}, w_{0}, \ell_{0},
m_{0}\right)$ such that $u_{0}, k_{0} \in \mathbb{R}$ and $\left(p_{0}, v_{0},
w_{0}, \ell_{0}, m_{0}\right) \in \bar{\mathcal{V}}
\cap\left\{\left|p-q_{*}\right|>\frac{\eta}{5}\right\}$. The value
$\lambda^{*}(x_{0})$ may be approximated by points with $u,k \in \mathbb{R}$
and $(p, v, w, \ell, m) \in \mathcal{V} \cap
\{\left|p-q_{*}\right|>\frac{\eta}{5}\}$ as follows. Define the line $S$
joining $x_0$ and the origin, and parametrize this line as
$x_t=t \cdot x_{0}$ for $t
\in [0,1]$. Since $\lambda^{*}$ is a closed convex function and $S$ is
polyhedral, the restriction of $\lambda^{*}$ to $S$ is continuous by the
Gale-Klee-Rockafellar Theorem \cite[Theorem~10.2]{rockafellar2015convex}. Note
that for all $t \in[0,1)$, we have $x_t:=\left(u_{t}, k_{t}, p_{t}, v_{t},
w_{t}, \ell_{t}, m_{t}\right) \in \R^2 \times \mathcal{V}$ since
\begin{equation*}
\begin{aligned}
A\left(p_{t}, v_{t}, w_{t}, \ell_{t}, m_{t}\right)
&=t^{2} \cdot A\left(p_{0}, v_{0}, w_{0}, \ell_{0}, m_{0}\right)
+\begin{pmatrix} 1-t^{2} & p_{0}\left(t-t^{2}\right) \\
p_{0}\left(t-t^{2}\right) & 1-t^{2} \end{pmatrix}
\end{aligned}
\end{equation*}
and
\[\begin{pmatrix}
1-t^{2} & p_{0}\left(t-t^{2}\right) \\
p_{0}\left(t-t^{2}\right) & 1-t^{2} \end{pmatrix} \succ 0\]
for all $t\in[0,1)$ and $p_0\in[-1,1]$. Since
$\left\{\left|p-q_{*}\right|>\frac{\eta}{5}\right\}$ is open, there must exist
a sequence of points $x_t \in S$ where also $|p_t-q_*|>\frac{\eta}{5}$,
and $\lambda^{*}(x_t) \to \lambda^*(x_0)$ by continuity of $\lambda^*$ on $S$.
So we conclude that the final supremum in \eqref{146oo} can indeed be
restricted to $\mathcal{V} \cap \{|p-q_*|>\frac{\eta}{5}\}$.

Then, applying \eqref{146oo} to \eqref{138oo}, we arrive at
\begin{equation}\label{varibb2}
\limsup_{n \to \infty} \frac{1}{n}\log \E[Z_c \mid \cG_t]
\leq \sup_{\substack{u, k \in \mathbb{R}\\ (p, v, w, \ell, m) \in \mathcal{V}
\cap\left\{\left|p-q_{*}\right|>\frac{\eta}{5}\right\}}} \inf _{(\gamma, v, \rho) \in \mathcal{D}_{+}} \inf _{\substack{U, K, P \in \mathbb{R} \\ V, W, L, M \in \mathbb{R}^{t}}} \Phi_{2, t}
\end{equation}
where $\Phi_{2,t}=\Phi_{2, t}(u, v, w, k, \ell, m, p ; \gamma, v, \rho, U, V, W,
K, L, M, P)$ in the same variational function as defined in
\cite[Eq.\ (4.3)]{fan2021replica}. The difference between the above upper bound
and the variational formula $\Psi_{2,t}$ of \cite[Eq.\ (4.7)]{fan2021replica}
is the restriction of the domain of $p$ to $\{|p-q_*|>\frac{\eta}{5}\}$.

For any fixed $(u,k,p,v,w,\ell,m) \in \R^2 \times \mathcal{V}$, the proof of
\cite[Lemma 4.5]{fan2021replica} shows an upper bound
\begin{equation}\label{eq:Psi2tupper}
\inf_{(\gamma, v, \rho) \in \mathcal{D}_{+}} \inf _{\substack{U, K, P \in \mathbb{R} \\ V, W, L, M \in \mathbb{R}^{t}}} \Phi_{2, t}
\leq \widetilde{\Phi}_{2,t}(p,v,w,\ell,m)
\end{equation}
by specializing to a particular choice of $(\gamma,\nu,\rho,U,K,P,V,W,L,M)$.
We require here only the following properties of $\tilde{\Phi}_{2,t}$, shown in
\cite[Lemma 4.5]{fan2021replica}: When $\beta \in\left(0, \beta_{0}\right)$ for
$\beta_{0}=\beta_{0}\left(\mu_{D}\right)$ sufficiently small,
this function $\widetilde{\Phi}_{2,t}$ satisfies the strong concavity
\begin{equation*}
\nabla_{p, v, w, \ell, m}^{2} \widetilde{\Phi}_{2, t}(p,v,w,\ell,m)
\prec-\beta^{1 / 2} I_{(4 t+1) \times(4 t+1)} \text{ for all } (p,v,w,\ell,m)
\in \mathcal{V}.
\end{equation*}
In addition, for the ``optimal parameters''
	\[ p_{*}=q_{*},\quad 
v_{*}=\ell_{*}=\left(1-q_{*}\right) \Delta_{t}^{1 / 2} e_{t}, \quad
		w_{*}=m_{*}=\kappa_{*}^{\frac{1}{2}}\left(1-q_{*}\right)
\Delta_{t}^{\frac{1}{2}} e_{t},\]
we have
\[\widetilde{\Phi}_{2,t}(p_*,v_*,w_*,\ell_*,m_*)=2\Psi_{\mathrm{RS}}+o_t(1),\quad
\left\|\nabla \widetilde{\Phi}_{2, t}\left(p_{*}, v_{*}, w_{*}, \ell_{*},
m_{*}\right)\right\|_{2}=o_{t}(1)\]
where $o_t(1)$ denote deterministic quantities converging to 0 as $t\to\infty$. 

Let $x_{*}=\left(p_{*}, v_{*}, w_{*}, \ell_{*}, m_{*}\right)$. Note that for
any $x \in \mathcal{V} \cap \{|p-q_*|>\frac{\eta}{5}\}$,
since $p_*=q_*$, we must have $\|x-x_*\|_2>\frac{\eta}{5}$.
Then, by the above properties of $\tilde{\Phi}_{2,t}$,
there exists some $t_0(\eta,\beta,\mu_D)\ge 1$ such that for all $t>t_0$
and any $x \in \mathcal{V}\cap \{|p-q_*|>\frac{\eta}{5}\}$, 
\begin{align*}
		\widetilde{\Phi}_{2, t}(x) 
		&< \widetilde{\Phi}_{2, t}\left(x_{*}\right)+\left(\nabla \widetilde{\Phi}_{2, t}\left(x_{*}\right)\right) \cdot\left(x-x_{*}\right)-\frac{1}{2} \beta^{\frac{1}{2}}\left\|x-x_{*}\right\|_{2}^{2}\\
		&< \widetilde{\Phi}_{2, t}\left(x_{*}\right)+\left\| \nabla \widetilde{\Phi}_{2, t}\left(x_{*}\right)\right\|_2 \cdot\left\|x-x_{*}\right\|_2-\frac{1}{2} \beta^{\frac{1}{2}}\left\|x-x_{*}\right\|_{2}^{2}
<2\Psi_{\mathrm{RS}}-c\beta^{1/2}\eta^2.
\end{align*}
Applying this and \eqref{eq:Psi2tupper} to \eqref{varibb2} yields the lemma.
\end{proof}

\subsection{Self-averaging for two non-orthogonal replicas}\label{self2replica}

Next, we show that the restricted free energy $n^{-1}\log Z_c(m^t,\delta,\eta)$
concentrates around its conditional mean.

\begin{Lemma}\label{Claim11}
Fix any $\delta,\eta,\varepsilon>0$.
Under \Cref{as}, for any fixed $t \geq 1$ and a constant
$\tilde{C}=\tilde{C}(\beta,\mu_D)>0$, almost surely for all large $n$,
\begin{equation*}
\mathbb{P}\left[\left|\frac{1}{n} \log Z_{c}(m^t,\delta,\eta)-\frac{1}{n}
\mathbb{E}\left[\log Z_{c}(m^t,\delta,\eta) \mid
\mathcal{G}_{t}\right]\right| \le \varepsilon \;\bigg|\; \cG_t\right]
\geq 1-2\exp\left(-\tilde{C} \varepsilon^2 n\right)
\end{equation*}
\end{Lemma}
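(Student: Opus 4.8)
The plan is to run the argument of \Cref{Claim9} essentially verbatim, with two replicas $(\sigma,\tau)$ in place of the single $\sigma$. As there, I would collect the AMP iterates $X=X_t:=(x^1,\ldots,x^t)$, $Y=Y_t:=(y^1,\ldots,y^t)$, $S=S_t:=(s^1,\ldots,s^t)$ from \eqref{AMP}; by \Cref{AMPSE} the matrix $(X,Y)$ has full column rank $2t$ almost surely for all large $n$, so \Cref{conditionO} applies and gives the conditional law of $O\sim\Haar(\SO(n))$ given $\cG_t$ as $V_{(S,\Lambda S)^\perp}\,\tilde O\,V_{(X,Y)^\perp}^\top$ plus a $\cG_t$-measurable rank-$2t$ term, with $\tilde O\sim\Haar(\SO(n-2t))$. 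Writing $V=V_{(S,\Lambda S)^\perp}$ and introducing $\sigma_\perp,\sigma_\|$ and $\tau_\perp,\tau_\|$ for the components of $\sigma$ and $\tau$ relative to the column spans of $(X,Y)$ and $(S,\Lambda S)$ exactly as in the proof of \Cref{Claim9}, and using that $B_c(m^t,\delta,\eta)$ is $\cG_t$-measurable, I would express the conditional law of $\tfrac1n\log Z_c(m^t,\delta,\eta)$ as that of a function $F(\tilde O)=\tfrac1n\log Z_c(\tilde O)$ of $\tilde O$ alone; here $Z_c(\tilde O)$ is obtained from the formula for the restricted partition function in the proof of \Cref{Claim9} by summing over pairs $(\sigma,\tau)\in B_c(m^t,\delta,\eta)$ and adding to the exponent the matching $\tau$-term $(V\tilde O\tau_\perp+\tau_\|)^\top\bar D(V\tilde O\tau_\perp+\tau_\|)+h^\top\tau$.

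The only genuinely new computation is the Lipschitz estimate. Differentiating in $\tilde O$ and arguing as for \eqref{103oo}, $\partial_{\tilde O}F(\tilde O)$ is a Gibbs average over $(\sigma,\tau)\in B_c(m^t,\delta,\eta)$ of a matrix equal to the $\sigma$-term $\tfrac2n\big(\sigma_\perp\sigma_\perp^\top\tilde O^\top V^\top\bar D V+\sigma_\perp\sigma_\|^\top\bar D V\big)$ plus the analogous $\tau$-term. Using $\|\sigma_\perp\|_2,\|\sigma_\|\|_2,\|\tau_\perp\|_2,\|\tau_\|\|_2\le\sqrt n$ together with $\|AB\|_{\mathrm F}\le\normop{A}\|B\|_{\mathrm F}$, each of these four summands has Frobenius norm at most $2\normop{\bar D}$, hence $\|\partial_{\tilde O}F(\tilde O)\|_{\mathrm F}\le 8\normop{\bar D}$ uniformly in $\tilde O$, which is exactly twice the bound \eqref{lip104}; this step is routine.

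Finally I would apply Gromov's concentration inequality \cite[Theorem~4.4.27]{anderson2010introduction} on $\SO(n-2t)$ to the $8\normop{\bar D}$-Lipschitz map $F$, obtaining $\mathbb{P}[|F(\tilde O)-\mathbb{E}F(\tilde O)|\le\varepsilon]\ge 1-2\exp(-cn\varepsilon^2/\normop{\bar D}^{2})$ for all large $n$, and then bound $\normop{\bar D}\le\tilde C(\beta,\mu_D)$ almost surely for all large $n$ via \Cref{as}\ref{Dsupport}; since $\mathbb{E}F(\tilde O)=\tfrac1n\mathbb{E}[\log Z_c(m^t,\delta,\eta)\mid\cG_t]$, this yields the claim with a suitably adjusted constant $\tilde C(\beta,\mu_D)$. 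I do not anticipate a real obstacle, as the proof is a mechanical two-replica transcription of \Cref{Claim9}; the only point meriting a word of care is that $\log Z_c(m^t,\delta,\eta)$ be well-defined, which holds because $B_c(m^t,\delta,\eta)$ depends only on $m^t$ and is therefore fixed once we condition on $\cG_t$, and for the range of $(\delta,\eta)$ of interest it is nonempty almost surely for all large $n$ — e.g.\ the diagonal pairs $\sigma=\tau$ with $\sigma\in\operatorname{Band}(m^t,\delta)$ satisfy $|\langle\sigma-m^t,\tau-m^t\rangle/n|=\|\sigma-m^t\|_2^2/n\ge 1-q_*-2\delta+o_n(1)$, which exceeds $\eta$ when $\eta<1-q_*-2\delta$.
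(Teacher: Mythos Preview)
Your proposal is correct and follows essentially the same approach as the paper's own proof: a two-replica transcription of the argument for \Cref{Claim9}, yielding the Lipschitz bound $\|\partial_{\tilde O}F(\tilde O)\|_{\mathrm F}\le 8\normop{\bar D}$ and concluding via Gromov's concentration inequality on $\SO(n-2t)$. Your added remark on non-emptiness of $B_c(m^t,\delta,\eta)$ is a careful point the paper does not make explicit.
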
 
\begin{proof}
The proof is similar to \Cref{Claim9}. The law of $\frac{1}{n} \log Z_{c}$ conditioned on $\mathcal{G}_{t}$ is
\begin{equation}\label{ccaar}
\begin{aligned}
\frac{1}{n} \log Z_{c}\bigg|_{\mathcal{G}_{t}}&\overset{L}{=}
\frac{1}{n} \log \sum_{(\sigma, \tau) \in B_{c}} \exp \left(\left(V \tilde{O} \sigma_{\perp}+\sigma_{\|}\right)^{\top} \bar{D}\left(V \tilde{O} \sigma_{\perp}+\sigma_{\|}\right)\right.\\
&\hspace{1in}\left.+\left(V \tilde{O} \tau_{\perp}+\tau_{\|}\right)^{\top} \bar{D}\left(V \tilde{O} \tau_{\perp}+\tau_{\|}\right)+h^{\top} \sigma+h^{\top} \tau\right)
\end{aligned}
\end{equation}
where $V,\sigma_{\|},\sigma_{\perp},\tilde{O}$ are as defined in the proof of
\Cref{Claim9}, and we define analogously
\begin{equation*}
\begin{aligned}
\tau_{\perp}=V_{(X, Y)^{\perp}}^{\top} \tau \in \mathbb{R}^{n-2 t}, \qquad
\tau_{\|}=(S, \Lambda S)\begin{pmatrix}
X^{\top} X & X^{\top} Y \\
Y^{\top} X & Y^{\top} Y
\end{pmatrix} (X, Y)^{\top} \tau \in \mathbb{R}^{n}.
\end{aligned}
\end{equation*}
We view the right side of \eqref{ccaar} as a function of $\tilde{O} \in
\R^{(n-2t) \times (n-2t)}$, and denote
this function by $F(\tilde{O})=\frac{1}{n} \log Z(\tilde{O})$.
Then, as in the proof of \Cref{Claim9},
\begin{equation*}
\begin{aligned}
\frac{\partial}{\partial \tilde{O}} F(\tilde{O})=\frac{1}{n} &
\frac{1}{Z(\tilde{O})} \sum_{(\sigma, \tau) \in B_{c}}\left[2 \sigma_{\perp} \sigma_{\perp}^{\top} \tilde{O}^{\top} V^{\top} \bar{D} V+2 \sigma_{\perp} \sigma_{\|}^{\top} \bar{D} V+2 \tau_{\perp} \tau_{\perp}^{\top} \tilde{O}^{\top} V^{\top} \bar{D} V+2 \tau_{\perp} \tau_{\|}^{\top} \bar{D} V\right] \\
&\hspace{1in} \times \exp \left(\left(V \tilde{O} \sigma_{\perp}+\sigma_{\|}\right)^{\top} \bar{D}\left(V \tilde{O} \sigma_{\perp}+\sigma_{\|}\right)\right.\\
&\hspace{2in}\left.+\left(V \tilde{O} \tau_{\perp}+\tau_{\|}\right)^{\top}
\bar{D}\left(V \tilde{O} \tau_{\perp}+\tau_{\|}\right)+h^{\top} \sigma+h^{\top}
\tau\right),
\end{aligned}
\end{equation*}
and we may bound this by
\begin{equation}\label{2replip}
\begin{aligned}
\left\|\frac{\partial}{\partial \tilde{O}} F(\tilde{O})\right\|_{\mathrm{F}} &
\leq 8\|\bar{D}\|_{\mathrm{op}}.
\end{aligned}
\end{equation}
The result then follows from Gromov's concentration inequality as in
the proof of \Cref{Claim9}.
\end{proof}

\begin{proof}[Proof of \Cref{Claim3}]
Let $c$ be the absolute constant in \Cref{Claim4}.
Using \Cref{Claim4}, there exists some $t_{0}=t_{0}\left(\eta, \beta, \mu_{D}\right) \geq 1$ such that for any $t \geq t_{0}$, almost surely for all large $n$,
\[\frac{1}{n} \log \mathbb{E}\left[Z_c \mid \mathcal{G}_{t}\right] \leq 2
\Psi_{\mathrm{RS}}-\frac{c}{2}\beta^{\frac{1}{2}} \eta^{2}.\]
Using \Cref{Claim11} with $\varepsilon=c\beta^{\frac{1}{2}}\eta^2/4$
and Jensen's inequality, almost surely for all large $n$,
\[\frac{1}{n} \log Z_c
\leq \frac{1}{n} \mathbb{E}\left[\log Z_c \mid
\mathcal{G}_{t}\right]+\frac{c}{4} \beta^{\frac{1}{2}} \eta^{2}
\leq \frac{1}{n} \log \mathbb{E}\left[Z_c \mid
\mathcal{G}_{t}\right]+\frac{c}{4} \beta^{\frac{1}{2}} \eta^{2}.\]
The lemma follows from combining the above two statements.
\end{proof}

\section{Convergence of AMP to the replicated average}\label{sec:mainlemma}

Finally, we conclude the proof of \Cref{Claim1}.
Consider $N$ i.i.d.\ replicas $(\sigma^1,\ldots,\sigma^N) \in \Sigma_n^N$,
as in the setting of \Cref{Claim1}. We use the results of the preceding two
sections to show that, with non-negligible probability, these replicas belong to
the following configuration space
\begin{equation*}
\begin{aligned}
B_{N}\left(m, \delta, \eta\right)=\bigg\{\left(\sigma^{1}, \ldots,
\sigma^{N}\right) \in \operatorname{Band}(m,
\delta)^N:\left|\frac{\left\langle\sigma^{i}-m,
\sigma^{j}-m\right\rangle}{n}\right| \leq \eta \text{ for all }
i \neq j \in[N]\bigg\}
\end{aligned}
\end{equation*}
for $\eta=4\delta$.
This strategy of restricting to $B_N(m,\delta,\eta)$ is inspired by
\cite{subag2018free,chen2018generalized,chen2021generalized}. We write the
corresponding restricted partition function as
\[Z_{B,N}(m,\delta,\eta)=
\sum_{(\sigma^1,\ldots,\sigma^N) \in B_N(m,\delta,\eta)}
\exp\left(\sum_{i=1}^N H(\sigma^i)\right).\]

\begin{Lemma}\label{Conditionstar}
Fix any $\delta,\varepsilon>0$ and $N \geq 1$.
In the setting of \Cref{mainthm_informal}, there exists $t_{0}=t_{0}\left(\delta, \varepsilon, \beta, \mu_{D}\right) \geq 1$ such that for any fixed $t \geq t_{0}$, almost surely for all large $n$,
\begin{align*}
\left|\frac{1}{n N} \log Z_{B,N}(m^t,\delta,4\delta)-\Psi_{\mathrm{RS}}\right|<\varepsilon. 
\end{align*}
\end{Lemma}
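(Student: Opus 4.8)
The plan is to deduce both inequalities $\frac{1}{nN}\log Z_{B,N}(m^t,\delta,4\delta)\le \Psi_{\mathrm{RS}}+\varepsilon$ and $\frac{1}{nN}\log Z_{B,N}(m^t,\delta,4\delta)\ge \Psi_{\mathrm{RS}}-\varepsilon$ directly from \Cref{Claim6} (the single-replica band estimate) and \Cref{Claim3} (the two-replica non-orthogonality estimate), using only an elementary union bound over pairs of replicas. The upper bound is immediate: since $B_N(m^t,\delta,4\delta)\subseteq\operatorname{Band}(m^t,\delta)^N$, we have $Z_{B,N}(m^t,\delta,4\delta)\le Z_B(m^t,\delta)^N$, hence $\frac{1}{nN}\log Z_{B,N}\le \frac1n\log Z_B(m^t,\delta)\le \Psi_{\mathrm{RS}}+\varepsilon'$ for $t$ and $n$ large by \Cref{Claim6}.

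For the lower bound, I would decompose $\operatorname{Band}(m^t,\delta)^N$ into the ``good'' set $B_N(m^t,\delta,4\delta)$ together with, for each pair $i<j$, the ``bad-pair'' set $\{(\sigma^1,\dots,\sigma^N)\in\operatorname{Band}(m^t,\delta)^N:|\langle\sigma^i-m^t,\sigma^j-m^t\rangle/n|>4\delta\}$. Since $\operatorname{Band}(m^t,\delta)^N\setminus B_N(m^t,\delta,4\delta)$ is contained in the union of these $\binom{N}{2}$ bad-pair sets, and summing $\exp(\sum_k H(\sigma^k))$ over the $(i,j)$-th of them factorizes as $Z_c(m^t,\delta,4\delta)\cdot Z_B(m^t,\delta)^{N-2}$ (the $\sigma^i,\sigma^j$ sum giving $Z_c$ by the definition \eqref{Bcdef}, and each remaining $\sigma^k$ giving a factor $Z_B$), we obtain
\[
Z_B(m^t,\delta)^N-Z_{B,N}(m^t,\delta,4\delta)\;\le\;\binom{N}{2}\,Z_c(m^t,\delta,4\delta)\,Z_B(m^t,\delta)^{N-2}.
\]
Now $\eta:=4\delta>3\delta$, so \Cref{Claim3} applies and gives $\frac1n\log Z_c(m^t,\delta,4\delta)\le 2\Psi_{\mathrm{RS}}-16c\beta^{1/2}\delta^2$, while \Cref{Claim6} gives $\frac1n\log Z_B(m^t,\delta)\ge\Psi_{\mathrm{RS}}-\varepsilon'$, for $t\ge t_0$ and $n$ large. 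Therefore the ratio $\binom{N}{2}Z_c Z_B^{N-2}/Z_B^{N}=\binom{N}{2}Z_c/Z_B^2$ is at most $\binom{N}{2}\exp\!\big(-n(16c\beta^{1/2}\delta^2-2\varepsilon')\big)$. Choosing the internal tolerance $\varepsilon':=\min\{\varepsilon/4,\,4c\beta^{1/2}\delta^2\}$ — a quantity depending only on $\delta,\varepsilon,\beta,\mu_D$, which then determines $t_0$ through \Cref{Claim6} and \Cref{Claim3} — makes the exponent $\le -8nc\beta^{1/2}\delta^2<0$, so this ratio is below $\tfrac12$ for all large $n$. Hence $Z_{B,N}(m^t,\delta,4\delta)\ge\tfrac12 Z_B(m^t,\delta)^N$, and
\[
\frac{1}{nN}\log Z_{B,N}(m^t,\delta,4\delta)\;\ge\;\frac1n\log Z_B(m^t,\delta)-\frac{\log 2}{nN}\;\ge\;\Psi_{\mathrm{RS}}-\varepsilon'-\frac{\log 2}{nN}\;>\;\Psi_{\mathrm{RS}}-\varepsilon
\]
for $n$ large. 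Combining this with the upper bound yields the claim.

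There is no serious obstacle in this step; the only point needing attention is the order of quantifiers — one must fix the first-moment error $\varepsilon'$ below the orthogonality gap $\propto c\beta^{1/2}\delta^2$ furnished by \Cref{Claim3} \emph{before} invoking \Cref{Claim6} and \Cref{Claim3} to extract $t_0$. This is harmless because $\delta$ (hence the gap) is fixed and the tolerance in \Cref{Claim6} can be taken arbitrarily small independently of it. All of the genuine difficulty has already been absorbed into \Cref{Claim6} (the conditional second-moment computation restricted to the band) and \Cref{Claim3}--\Cref{Claim4} (the strongly concave upper bound for the two-replica variational formula), and this decomposition is in the spirit of \cite{subag2018free,chen2018generalized,chen2021generalized} but is considerably simpler in the present high-temperature regime.
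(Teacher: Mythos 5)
Your proof is correct and takes essentially the same approach as the paper's: both decompose $\operatorname{Band}(m^t,\delta)^N\setminus B_N(m^t,\delta,4\delta)$ into the $\binom{N}{2}$ bad-pair sets, factor the restricted sum as $Z_c Z_B^{N-2}$, and use \Cref{Claim6} and \Cref{Claim3} to show the ratio $Z_c/Z_B^2$ is exponentially small. The only cosmetic differences are that you obtain the upper bound via $Z_{B,N}\le Z_B^N$ and \Cref{Claim6}, whereas the paper uses $Z_{B,N}\le Z^N$ and \Cref{freeRS}, and your remark about fixing the internal tolerance $\varepsilon'$ before extracting $t_0$ makes explicit a quantifier ordering that the paper leaves implicit.
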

\begin{proof}
We denote $Z_B=Z_B(m^t,\delta)$, $B_N=B_N(m^t,\delta,4\delta)$, and
$Z_{B,N}=Z_{B,N}(m^t,\delta,4\delta)$. The upper
bound follows from \Cref{freeRS}:
There exists some $t_{0}=t_{0}\left(\varepsilon, \beta,
\mu_{D}\right) \geq 1$ such that for any $t \geq t_{0}$, almost surely for all large $n$,
\begin{equation}\label{ubgood}
\begin{aligned}
\frac{1}{n N} \log Z_{B,N} &\leq \frac{1}{n N} \log \sum_{\left(\sigma^{1},
\ldots, \sigma^{N}\right) \in \Sigma_{n}^{N}} \exp \left(\sum_{i=1}^{N}
H(\sigma^i)\right)
=\frac{1}{n} \log Z \leq \Psi_{\mathrm{RS}}+\varepsilon.
\end{aligned}
\end{equation}

For the complementary lower bound, define for any $i \neq j \in[N]$
\begin{equation*}
B_{c}(i, j):=\left\{\left(\sigma^{1}, \ldots, \sigma^{N}\right) \in
\operatorname{Band}\left(m^{t}, \delta\right)^N:\left|\frac{\left\langle\sigma^{i}-m^{t}, \sigma^{j}-m^{t}\right\rangle}{n}\right|>4 \delta\right\}
\end{equation*}
and note that
\[B_{N}
=\operatorname{Band}\left(m^{t}, \delta\right)^N\setminus
\bigcup_{i, j \in[N]: i \neq j} B_{c}(i, j).\]
Then by the definition of $Z_B$ and symmetry of the replicas,
\begin{align*}
\frac{1}{n N} \log Z_{B,N}
&=\frac{1}{n N} \log \left(\sum_{\left(\sigma^{1}, \ldots, \sigma^{N}\right) \in
\operatorname{Band}\left(m^{t}, \delta\right)^N} \exp \left(\sum_{j=1}^{N} H\left(\sigma^{j}\right)\right) \right.  \left.-\sum_{\left(\sigma^{1}, \ldots, \sigma^{N}\right) \in \cup_{i \neq j \in[N]} B_{c}(i, j)} \exp \left(\sum_{j=1}^{N} H\left(\sigma^{j}\right)\right)\right)\\
& \geq \frac{1}{n N} \log \left(Z_{B}^{N}-\left(N^{2}-N\right)
\cdot\left(\sum_{(\sigma, \tau) \in B_{c}} \exp (H(\sigma)+H(\tau))\right)
Z_{B}^{N-2}\right).
\end{align*}
Then
\begin{equation}\label{fdfd}
\begin{aligned}
\frac{1}{n N} \log Z_{B,N}-\frac{1}{n} \log Z_{B}
\ge \frac{1}{n N} \log \Bigg(1-\left(N^{2}-N\right)\exp \bigg(\log \sum_{(\sigma, \tau) \in B_{c}} \exp (H(\sigma)+H(\tau))-2 \log Z_{B}\bigg)\Bigg).
\end{aligned}
\end{equation}

Using \Cref{Claim6} and \Cref{Claim3},
there exists some $t_{0}=t_{0}\left(\delta, \beta, \mu_{D}\right) \geq 1$ and an
absolute constant $c>0$ such that for any fixed $t \geq t_{0}$,
almost surely for all large $n$,
\[\frac{1}{n} \log \sum_{(\sigma, \tau) \in B_{c}} \exp(H(\sigma)+H(\tau))
-\frac{2}{n} \log Z_{B} \leq-c\beta^{\frac{1}{2}} \delta^{2}.\]
Applying this to \eqref{fdfd},
\begin{equation}\label{lbgood}
\begin{aligned}
&\frac{1}{n N} \log Z_{B,N} \geq \frac{1}{n} \log Z_{B}+\frac{1}{n N} \log
\left(1-\left(N^{2}-N\right) \cdot \exp \left(-c\beta^{\frac{1}{2}} \delta^{2}
n\right)\right) \geq \Psi_{\mathrm{RS}}-\varepsilon,
\end{aligned}
\end{equation}
where the last inequality applies again \Cref{Claim6} and holds for any fixed $N
\geq 1$ and all large $n$.
Combining \eqref{ubgood} and \eqref{lbgood} completes the proof. 
\end{proof}

Next, we show that if $(\sigma^1,\ldots,\sigma^N) \in B_N(m^t,\delta,4\delta)$,
then their replica average must be close to $m^t$.

\begin{Lemma}\label{Claim2}
Fix any $N \geq 1$, $t \geq 1$, and $\delta>0$. Then for an absolute constant
$C>0$, almost surely for all large $n$,
\begin{equation*}
\begin{aligned}
B_{N}\left(m^{t}, \delta, 4 \delta\right)  \subseteq \bigg\{\left(\sigma^{1},
\ldots, \sigma^{N}\right) \in \Sigma_{n}^{N}: \frac{1}{n}\left\|\frac{1}{N}
\sum_{i=1}^{N} \sigma^{i}-m^t\right\|_{2}^{2}
=\frac{1}{N}\left(1-q_{*}\right)+C \delta \bigg\}
\end{aligned}
\end{equation*}
\end{Lemma}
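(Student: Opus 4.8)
The plan is to expand the squared norm directly and estimate the diagonal and off-diagonal contributions using, respectively, the band condition and the near-orthogonality condition that define $B_N(m^t,\delta,4\delta)$. Concretely, writing $\frac1N\sum_i\sigma^i-m^t=\frac1N\sum_i(\sigma^i-m^t)$, I would start from
\[
\frac{1}{n}\left\|\frac{1}{N}\sum_{i=1}^N\sigma^i-m^t\right\|_2^2
=\frac{1}{N^2 n}\sum_{i=1}^N\|\sigma^i-m^t\|_2^2
+\frac{1}{N^2 n}\sum_{i\neq j}\langle\sigma^i-m^t,\sigma^j-m^t\rangle .
\]
Here the assumption $(\sigma^1,\dots,\sigma^N)\in B_N(m^t,\delta,4\delta)$ enters through $|\langle\sigma^i-m^t,\sigma^j-m^t\rangle|\le 4\delta n$ for $i\neq j$ and $|\langle\sigma^i-m^t,m^t\rangle|<\delta n$ for all $i$.

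For the off-diagonal sum there are $N^2-N$ terms, each at most $4\delta n$ in absolute value, so that term is at most $4\delta\,(1-1/N)\le 4\delta$. For the diagonal terms, I would use $\|\sigma^i\|_2^2=n$ together with the identity $\|\sigma^i-m^t\|_2^2=\|\sigma^i\|_2^2-2\langle\sigma^i-m^t,m^t\rangle-\|m^t\|_2^2$ and the band bound to get
\[
\left|\frac{1}{n}\|\sigma^i-m^t\|_2^2-\Big(1-\tfrac{1}{n}\|m^t\|_2^2\Big)\right|<2\delta
\qquad\text{for all }i\in[N].
\]
By the AMP state evolution (equation \eqref{mtlength}; indeed $y^{t-1}$ has limiting variance $\sigma_*^2$ and $q_*$ solves \eqref{fixedpointbar}, so $\tfrac1n\|m^t\|_2^2\to q_*$ for every fixed $t\ge1$), we have $\tfrac1n\|m^t\|_2^2=q_*+o_n(1)$ almost surely, hence $\tfrac1n\|\sigma^i-m^t\|_2^2=1-q_*+O(\delta)$ uniformly in $i$ for all large $n$. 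Summing, the diagonal contribution equals $\frac{1}{N^2}\cdot N\cdot(1-q_*+O(\delta))=\frac1N(1-q_*)+O(\delta)$, and adding the off-diagonal bound yields $\frac{1}{n}\|\frac1N\sum_i\sigma^i-m^t\|_2^2=\frac1N(1-q_*)+C\delta$ for an absolute constant $C$.

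There is no substantial obstacle here; the computation is routine. The only points requiring a little care are invoking the state-evolution limit $\tfrac1n\|m^t\|_2^2\to q_*$ (which holds for each fixed $t$, via \Cref{AMPSE}) and checking that the associated $o_n(1)$ error, together with the strictness in the band and orthogonality inequalities, can be absorbed into the $C\delta$ slack once $n$ is large enough. (Only the ``$\le$'' direction of the displayed equality is used subsequently, and the bound $\tfrac1N(1-q_*)+C\delta\le\tfrac1N+C\delta$ is all that \Cref{Claim1} will need.)
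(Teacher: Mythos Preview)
Your proposal is correct and follows essentially the same approach as the paper: expand the squared norm of $\frac1N\sum_i(\sigma^i-m^t)$ into diagonal and off-diagonal inner products, bound the off-diagonal terms by the near-orthogonality condition, bound the diagonal terms using $\|\sigma^i\|_2^2=n$, the band condition, and the state-evolution limit $\tfrac1n\|m^t\|_2^2\to q_*$ from \eqref{mtlength}. Your parenthetical remark that only the ``$\le$'' direction is needed (and used) is also accurate.
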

\begin{proof}
For any $n \geq 1$, $N \geq 1$, $t \geq 1$, and $\delta>0$, applying
$\|\sigma^i\|_2^2=n$,
\begin{align*}
\frac{1}{n}\left\|\frac{1}{N} \sum_{i=1}^{N} \sigma^{i}-m^t\right\|_{2}^{2}
&=\frac{1}{n
N^{2}}\left(\sum_{i=1}^{N}\left\|\sigma^{i}-m^t\right\|_{2}^{2}+\sum_{i \neq j
\in[N]} (\sigma^{i}-m^t) \cdot (\sigma^{j}-m^t)\right) \\
&=\frac{1}{n N^{2}}\left(nN-N\|m^t\|_2^2-\sum_{i=1}^N 2m^t \cdot (\sigma^i-m^t)
+\sum_{i \neq j \in[N]}
\left(\sigma^{i}-m^{t}\right) \cdot\left(\sigma^{j}-m^{t}\right)\right).
\end{align*}
Suppose $(\sigma^1,\ldots,\sigma^N)\in B_N(m^t,\delta,4\delta)$.
Then $n^{-1}|m^{t} \cdot(\sigma^{i}-m^{t})|<\delta$ and
$n^{-1}|(\sigma^{i}-m^{t})\cdot(\sigma^{j}-m^{t})| \leq 4 \delta$.
Also, by \eqref{mtlength},
$n^{-1}\left\|m^t\right\|_{2}^{2} \geq q_{*}-\delta$ almost surely
for all large $n$. So this gives
\[\frac{1}{n}\left\|\frac{1}{N} \sum_{i=1}^{N} \sigma^{i}-m^t\right\|_{2}^{2}
\le \frac{1}{N}(1-q_*)+C\delta.\]
\end{proof}

\begin{proof}[Proof of \Cref{Claim1}]
Let $C_0=C$ be the absolute constant of \Cref{Claim2}.
We define 
\begin{equation*}
\tilde{B}_{N}:=\left\{\left(\sigma^{1}, \ldots, \sigma^{N}\right) \in
\Sigma_{n}^{N}: \frac{1}{n}\left\|\frac{1}{N} \sum_{i=1}^{N}
\sigma^{i}-m^{t}\right\|_{2}^{2} \leq \frac{1}{N}+C \delta\right\}.
\end{equation*}
Note that
\begin{align*}
\frac{1}{n N} \log \left\langle\mathbb{I}\left(\left(\sigma^{1}, \ldots,
\sigma^{N}\right) \in \tilde{B}_{N}\right)\right\rangle
&=\frac{1}{n N} \log
\sum_{\left(\sigma^{1}, \ldots, \sigma^{N}\right) \in \tilde{B}_{N}} \exp
\left(\sum_{i=1}^N H(\sigma^i)\right)-\frac{1}{n} \log Z\\
&\geq \frac{1}{n N} \log Z_{B,N}(m^t,\delta,4\delta)-\frac{1}{n}\log Z,
\end{align*}
the second line applying \Cref{Claim2}. Then applying
\Cref{Conditionstar} and \Cref{freeRS},
there exists some $t_{0}=t_{0}\left(\delta, \varepsilon, \beta, \mu_{D}\right) \geq 1$ such that for any $t \geq t_{0}$, almost surely for all large $n$,
\begin{equation*}
\frac{1}{n N} \log \left\langle\mathbb{I}\left(\left(\sigma^{1}, \ldots,
\sigma^{N}\right) \in \tilde{B}_{N}\right)\right\rangle>-\varepsilon,
\end{equation*}
which is the desired result.
\end{proof}

This concludes the proof of \Cref{Claim1}, and hence of
the main result \Cref{mainthm_informal}.

\appendix 

\section{State evolution of AMP} \label{section:SEandRS}
In this appendix, we collect relevant background on the AMP state evolution.
Define
\begin{equation}\label{kappadeltastar}
    \kappa_{*}=\lim _{n \rightarrow \infty} \frac{1}{n} \operatorname{Tr} \Gamma^{2}, \quad \delta_{*}=\sigma_{*}^{2} / \kappa_{*}
\end{equation}
By \cite[Proposition~2.1]{fan2021replica}, these have the explicit forms
\begin{align}
&\kappa_{*}=\frac{1}{1-\left(1-q_{*}\right)^{2} \bar{R}^{\prime}\left(1-q_{*}\right)}-1 \nonumber \\ \label{deltastar}
&\delta_{*}=\frac{q_{*}}{\left(1-q_{*}\right)^{2}}-\sigma_{*}^{2}=\mathbb{E}\left[\left(\frac{1}{1-q_{*}} \tanh \left(\mathsf{H}+\sigma_{*} \mathsf{G}\right)-\sigma_{*} \mathsf{G}\right)^{2}\right]
\end{align}
The AMP algorithm \eqref{AMP} satisfies the following state evolution result.
\begin{Proposition}[{\cite[Theorem~2.2]{fan2021replica}}]\label{AMPSE}
Fix any $t \geq 1$, and let $Y_{t}=\left(y^{1}, \ldots, y^{t}\right) \in \mathbb{R}^{n \times t}$ and $X_{t}=\left(x^{1}, \ldots, x^{t}\right) \in \mathbb{R}^{n \times t}$ collect the iterates of \eqref{AMP}, starting from the initialization \eqref{AMPInit}. Then, under \Cref{as}, almost surely as $n \rightarrow \infty$, the empirical distribution of rows of $\left(h, y^{0}, Y_{t}, X_{t}\right)$ converges to a joint limit law
\begin{equation*}
\frac{1}{n} \sum_{i=1}^{n} \delta_{\left(h_{i}, y_{i}^{0}, y_{i}^{1}, \ldots, y_{i}^{t}, x_{i}^{1}, \ldots, x_{i}^{t}\right)} \rightarrow\left(\mathsf{H}, \mathsf{Y}_{0}, \mathsf{Y}_{1}, \ldots, \mathsf{Y}_{t}, \mathsf{X}_{1}, \ldots, \mathsf{X}_{t}\right)
\end{equation*}
weakly and in $p^{\text {th}}$ moment for each fixed order $p \geq 1 .$ The random variables on the right are distributed as follows: First, let $\mathsf{H} \sim \mu_{H}$ and $\mathsf{Y}_{0} \sim \mathcal{N}\left(0, \sigma_{*}^{2}\right)$ be independent of each other. Then, iteratively for each $s=1, \ldots, t$, set
\begin{equation*}
\begin{aligned}
\mathsf{X}_{s} &=\frac{1}{1-q_{*}} \tanh \left(\mathsf{H}+\mathsf{Y}_{s-1}\right)-\mathsf{Y}_{s-1} \\
\Delta_{s} & := \mathbb{E}\left[\left(\mathsf{X}_{1}, \ldots, \mathsf{X}_{s}\right)\left(\mathsf{X}_{1}, \ldots, \mathsf{X}_{s}\right)^{\top}\right]
\end{aligned}
\end{equation*}
and draw $\mathsf{Y}_{s}$ independently of $\left(\mathsf{H}, \mathsf{Y}_{0}\right)$ so that $\left(\mathsf{Y}_{1}, \ldots, \mathsf{Y}_{s}\right) \sim \mathcal{N}\left(0, \kappa_{*} \Delta_{s}\right)$.

Furthermore, almost surely as $n \rightarrow \infty$,
\begin{equation}\label{Deltadef}
\begin{aligned}
n^{-1} X_{t}^{\top} X_{t} & \rightarrow \mathbb{E}\left[\left(\mathsf{X}_{1}, \ldots, \mathsf{X}_{t}\right)\left(\mathsf{X}_{1}, \ldots, \mathsf{X}_{t}\right)^{\top}\right]=\Delta_{t} \\
n^{-1} Y_{t}^{\top} Y_{t} & \rightarrow \mathbb{E}\left[\left(\mathsf{Y}_{1}, \ldots, \mathsf{Y}_{t}\right)\left(\mathsf{Y}_{1}, \ldots, \mathsf{Y}_{t}\right)^{\top}\right]=\kappa_{*} \Delta_{t} \\
n^{-1} X_{t}^{\top} Y_{t} & \rightarrow \mathbb{E}\left[\left(\mathsf{X}_{1}, \ldots, \mathsf{X}_{t}\right)\left(\mathsf{Y}_{1}, \ldots, \mathsf{Y}_{t}\right)^{\top}\right]=0.
\end{aligned}
\end{equation}
\end{Proposition}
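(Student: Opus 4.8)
The plan is to prove \Cref{AMPSE} by induction on $t$, using the conditioning technique for Haar-distributed orthogonal matrices and exploiting the fact that the iteration \eqref{AMP} has been designed so that no Onsager correction is needed. First I would note that everything is driven by the single matrix $O$: since $y^{t}=O^{\top}\Lambda s^{t}=O^{\top}\Lambda O\,x^{t}=\Gamma x^{t}$, one step of the algorithm is the composition $x^{t}\mapsto s^{t}=Ox^{t}\mapsto y^{t}=O^{\top}\Lambda s^{t}$ with the separable nonlinearity $g(y)=\tfrac{1}{1-q_{*}}\tanh(h+y)-y$, and the sigma-field $\cG_{t}$ carries exactly the information $O(X_{t},Y_{t})=(S_{t},\Lambda S_{t})$, where $X_{t}=(x^{1},\dots,x^{t})$, $Y_{t}=(y^{1},\dots,y^{t})$, $S_{t}=(s^{1},\dots,s^{t})$. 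The inductive hypothesis at level $t$ is the claimed convergence of the empirical row distribution of $(h,y^{0},Y_{t},X_{t})$ and of the Gram limits \eqref{Deltadef}, together with the auxiliary facts that $\Delta_{t}$ and $\kappa_{*}\Delta_{t}$ are non-singular and that $\limsup_{n}n^{-1/2}(\|X_{t}\|_{\mathrm{op}}+\|Y_{t}\|_{\mathrm{op}})<\infty$ almost surely.

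For the inductive step I would condition on $\cG_{t}$ and use the conditional law of $O\sim\Haar(\SO(n))$ given the image constraint $O(X_{t},Y_{t})=(S_{t},\Lambda S_{t})$ (\Cref{conditionO} and \Cref{lemma:QAOA}): conditionally $O\overset{L}{=}V_{(S_{t},\Lambda S_{t})^{\perp}}\tilde O\,V_{(X_{t},Y_{t})^{\perp}}^{\top}+(S_{t},\Lambda S_{t})\,\Theta_{t}^{-1}(X_{t},Y_{t})^{\top}$ with $\tilde O\sim\Haar(\SO(n-2t))$ and $\Theta_{t}$ the Gram matrix of the columns of $(X_{t},Y_{t})$. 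Decomposing $x^{t+1}=x_{\parallel}^{t+1}+x_{\perp}^{t+1}$ along and orthogonal to $\mathrm{span}(X_{t},Y_{t})$, the parallel contribution to $s^{t+1}=Ox^{t+1}$ is a $\cG_{t}$-measurable linear combination of $s^{1},\dots,s^{t},\Lambda s^{1},\dots,\Lambda s^{t}$ whose coefficients converge, by the Gram limits and the state evolution at level $t$, to explicit constants; the orthogonal contribution $V_{(S_{t},\Lambda S_{t})^{\perp}}\tilde O(V_{(X_{t},Y_{t})^{\perp}}^{\top}x_{\perp}^{t+1})$ is, since $\tilde O$ sends a fixed vector to a uniform point on a sphere of radius $\|x_{\perp}^{t+1}\|_{2}$ in an $(n-2t)$-dimensional space, asymptotically a centred Gaussian vector with approximately i.i.d.\ coordinates of variance $n^{-1}\|x_{\perp}^{t+1}\|_{2}^{2}$, independent of $\cG_{t}$ and supported orthogonally to $(S_{t},\Lambda S_{t})$. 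Passing $s^{t+1}$ through $O^{\top}\Lambda$ by the same decomposition yields the conditional law of $y^{t+1}$; reading off the joint limit of the rows of $(h,y^{0},Y_{t+1},X_{t+1})$ and invoking concentration of bilinear forms in Haar matrices for the updated Gram limits \eqref{Deltadef} closes the induction.

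The substance of the argument is to check that, once these projection corrections are accumulated over iterations $1,\dots,t$, they collapse to exactly the memoryless recursion of \Cref{AMPSE}: in particular that $n^{-1}X_{t}^{\top}Y_{t}\to 0$ and that $(\mathsf{Y}_{1},\dots,\mathsf{Y}_{t})$ remains jointly Gaussian with covariance $\kappa_{*}\Delta_{t}$, with no long-memory terms surviving. Two structural facts make this work. The choice $\bar{G}(\lambda_{*})=1-q_{*}$ forces $n^{-1}\Tr\Lambda\to 0$, so $\Gamma$ is asymptotically traceless and the ``mean'' term $(n^{-1}\Tr\Gamma)\,x^{t+1}$ drops out. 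And the initialization \eqref{AMPInit} at the fixed-point variance $\sigma_{*}^{2}$, combined with the $-y^{t-1}$ subtraction in $g$, keeps the marginal law of every $\mathsf{Y}_{s}$ equal to $\mathcal{N}(0,\sigma_{*}^{2})$ (the diagonal entries of $\kappa_{*}\Delta_{s}$ all equal $\kappa_{*}\delta_{*}=\sigma_{*}^{2}$, using $q_{*}=\mathbb{E}[\tanh^{2}(\mathsf{H}+\sigma_{*}\mathsf{G})]$ and \eqref{deltastar}) and forces the effective divergence $n^{-1}\sum_{i}\big[\tfrac{1}{1-q_{*}}\operatorname{sech}^{2}(h_{i}+y_{i}^{t})-1\big]\to\tfrac{1}{1-q_{*}}(1-q_{*})-1=0$; by Gaussian integration by parts the latter also gives $\operatorname{Cov}(\mathsf{X}_{j},\mathsf{Y}_{s'})=0$, hence $n^{-1}X_{t}^{\top}Y_{t}\to 0$. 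Since every nonlinearity along the iteration is thereby asymptotically divergence-free, all Onsager coefficients appearing in the general rotationally-invariant AMP state evolution of \cite{fan2020approximate} vanish, and the only surviving covariance of $(\mathsf{Y}_{1},\dots,\mathsf{Y}_{t})$ is the second free cumulant of $\Gamma$ — which, since $n^{-1}\Tr\Gamma\to 0$, reduces to $\lim_{n}n^{-1}\Tr\Gamma^{2}=\kappa_{*}$ — times $\Delta_{t}$. I would make this quantitative by casting \eqref{AMP} directly into the framework of \cite{fan2020approximate} and specialising its state-evolution recursion, the relevant algebra being precisely the computation of $\kappa_{*}$ and $\delta_{*}$ recorded in \eqref{kappadeltastar}--\eqref{deltastar}.

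\textbf{Main obstacle.} I expect the hard part to be this identification of the accumulated projection corrections with the free-cumulant recursion: one must show that the deterministic part of the conditional $O$, acting on the orthogonalised iterates and composed with the diagonal map $\Lambda$, reproduces in the limit exactly the linear combinations dictated by the free cumulants of $\mu_{\bar{D}}$, and that all remainder terms are uniformly negligible for the non-polynomial Lipschitz nonlinearity $\tanh$. This requires a polynomial (or pseudo-Lipschitz) approximation of $\tanh$ together with quantitative concentration estimates — of orthogonal Weingarten type — for quadratic and bilinear forms in $\tilde O$ and for traces of polynomials in $\Lambda$. The passage from $\O(n)$ to $\SO(n)$ is comparatively minor, amounting to conditioning on one additional linear constraint that does not alter any $n\to\infty$ limit, as detailed in Appendix~\ref{appendix:SOn}.
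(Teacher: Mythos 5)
This proposition is quoted in the paper as \cite[Theorem~2.2]{fan2021replica} without proof; the cited source establishes it by specializing the general rotationally-invariant AMP state evolution of \cite{fan2020approximate} to the algorithm \eqref{AMP}, and your proposal reconstructs exactly this route. You correctly isolate the two structural facts that collapse the general state-evolution recursion to the memoryless form: $\bar G(\lambda_*)=1-q_*$ forces $n^{-1}\Tr\Lambda\to 0$ so $\Gamma$ is asymptotically traceless, and the fixed-point initialization \eqref{AMPInit} together with the $-y^{t-1}$ subtraction makes each $\mathsf{Y}_{s-1}\sim\mathcal N(0,\sigma_*^2)$ (via $\delta_{ss}=\delta_*$, shown inductively by Stein's identity), whence $\mathbb{E}[\tfrac{1}{1-q_*}\operatorname{sech}^2(\mathsf{H}+\mathsf{Y}_{s-1})-1]=0$ kills all memory coefficients and also yields $n^{-1}X_t^\top Y_t\to 0$ by Gaussian integration by parts. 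The conditioning-on-$\cG_t$/orthogonal-complement induction you sketch is precisely the proof technique underlying \cite{fan2020approximate}, and ``casting \eqref{AMP} into that framework'' is what \cite{fan2021replica} does.

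One small imprecision: the $\O(n)\to\SO(n)$ passage for this proposition is even simpler than ``conditioning on one additional linear constraint.'' The AMP iterates $\{x^t\}$, $\{y^t\}$ depend on $O$ only through $\Gamma=O^\top\Lambda O$ and the independent $y^0$, and the law of $\Gamma$ is identical whether $O\sim\Haar(\O(n))$ or $O\sim\Haar(\SO(n))$ (conjugating $O$ by $\diag(1,\ldots,1,-1)$ fixes the diagonal $\Lambda$). So no adjustment is needed for the state evolution at all; the material in Appendix~\ref{appendix:SOn} is required only for the conditional moment computations $\E[Z\mid\cG_t]$, where the law of $O$ conditioned on $\cG_t$ genuinely differs between the two groups.
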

\noindent Note that this result holds equally for
$O\sim \Haar(\O(n))$ and $O\sim \Haar(\SO(n))$, as the model and the joint
law of the AMP iterates $\{x^t\}$ and $\{y^t\}$ are identical in these two
settings.

The matrix $\Delta_{t}$ in \Cref{AMPSE} is the upper-left $t \times t$
submatrix of $\Delta_{t+1}$. Writing its entries as
$\Delta_{t}=\left(\delta_{s s^{\prime}}\right)_{1 \leq s, s^{\prime} \leq t}$,
we may express the block decomposition of $\Delta_{t+1}$ as
\begin{equation}\label{deltat}
    \Delta_{t+1}=\begin{pmatrix} \Delta_{t} & \delta_{t} \\
\delta_{t}^{\top} & \delta_{t+1,t+1} \end{pmatrix},
\quad \delta_{t}=\left(\delta_{1, t+1}, \ldots, \delta_{t, t+1}\right).
\end{equation}

Under the initialization \eqref{AMPInit}, the following properties hold
for the above state evolution.

\begin{Proposition}[{\cite[Proposition~2.3]{fan2021replica}}]\label{Prop23}
In the setting of \Cref{AMPSE}, for some $\beta_{0}=\beta_{0}\left(\mu_{D}\right)>0$ and all $\beta \in\left(0, \beta_{0}\right)$, we have
\begin{equation*}
\begin{gathered}
\delta_{t t}=\delta_{*} \text { and } \kappa_{*} \delta_{t t}=\sigma_{*}^{2}
\text { for all } t \geq 1, \\ \lim _{\min (s, t) \rightarrow \infty} \delta_{s
t}=\delta_{*}, \quad \lim _{\min (s, t) \rightarrow \infty} \kappa_{*} \delta_{s
t}=\sigma_{*}^{2}.
\end{gathered}
\end{equation*}
\end{Proposition}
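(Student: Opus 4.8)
The plan is to prove the two assertions --- the diagonal identity and the off-diagonal limit --- separately, working directly from the state evolution of \Cref{AMPSE}. Write $g(h,y)=\frac{1}{1-q_*}\tanh(h+y)-y$, so that $\mathsf{X}_s=g(\mathsf{H},\mathsf{Y}_{s-1})$ and $\delta_{st}=\mathbb{E}[g(\mathsf{H},\mathsf{Y}_{s-1})g(\mathsf{H},\mathsf{Y}_{t-1})]$; note $\partial_y g(h,y)=\frac{1}{1-q_*}(1-\tanh^2(h+y))-1\in(-1,\tfrac{q_*}{1-q_*}]$, so $|\partial_y g|\le(1-q_*)^{-1}$ pointwise. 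Throughout, $\beta_0=\beta_0(\mu_D)$ is taken small enough that the fixed point $(q_*,\sigma_*^2)$ of \eqref{fixedpointbar} exists and $\kappa_*<\infty$.

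\emph{Diagonal.} I would show $\delta_{tt}=\delta_*$ by induction on $t$. For $t=1$, $\mathsf{Y}_0\sim\mathcal{N}(0,\sigma_*^2)$ is independent of $\mathsf{H}$, so $\mathsf{Y}_0\overset{d}{=}\sigma_*\mathsf{G}$ and $\delta_{11}=\mathbb{E}[g(\mathsf{H},\sigma_*\mathsf{G})^2]=\delta_*$ by the explicit formula \eqref{deltastar}. For the inductive step, the law $(\mathsf{Y}_1,\ldots,\mathsf{Y}_{t-1})\sim\mathcal{N}(0,\kappa_*\Delta_{t-1})$ gives $\mathsf{Y}_{t-1}\sim\mathcal{N}(0,\kappa_*\delta_{t-1,t-1})$, which by the inductive hypothesis and $\delta_*=\sigma_*^2/\kappa_*$ from \eqref{kappadeltastar} equals $\mathcal{N}(0,\sigma_*^2)$; since $\mathsf{Y}_{t-1}$ is independent of $\mathsf{H}$, the same computation gives $\delta_{tt}=\delta_*$. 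Hence $\kappa_*\delta_{tt}=\kappa_*\delta_*=\sigma_*^2$ for all $t\ge1$.

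\emph{Off-diagonal.} The idea is to read the recursion for $\delta_{st}$ as a scalar iteration with a contracting fixed point. Once $\delta_{ss}=\delta_{tt}=\delta_*$ is known, for $s,t\ge2$ the pair $(\mathsf{Y}_{s-1},\mathsf{Y}_{t-1})$ is jointly Gaussian and independent of $\mathsf{H}$, with common variance $\sigma_*^2$ and covariance $\sigma_*^2\rho$ where $\rho=\delta_{s-1,t-1}/\delta_*$; therefore $\delta_{st}=\Phi(\delta_{s-1,t-1}/\delta_*)$, where $\Phi(\rho):=\mathbb{E}[g(\mathsf{H},\mathsf{Z}_1)g(\mathsf{H},\mathsf{Z}_2)]$ for $(\mathsf{Z}_1,\mathsf{Z}_2)$ jointly Gaussian with common variance $\sigma_*^2$, covariance $\sigma_*^2\rho$, independent of $\mathsf{H}$. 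By Cauchy--Schwarz $|\Phi(\rho)|\le\delta_*$ and $\Phi(1)=\delta_*$, so $T:=\Phi(\,\cdot\,/\delta_*)$ maps $[-\delta_*,\delta_*]$ into itself with fixed point $\delta_*$. Applying Price's theorem (differentiating the bivariate Gaussian expectation in the off-diagonal entry of the covariance, conditionally on $\mathsf{H}$) followed by Cauchy--Schwarz yields $|\Phi'(\rho)|\le\sigma_*^2\,\mathbb{E}[\partial_y g(\mathsf{H},\sigma_*\mathsf{G})^2]\le\sigma_*^2/(1-q_*)^2$, hence
\[
|T'|\le\frac{\kappa_*}{(1-q_*)^2}=\frac{\bar{R}'(1-q_*)}{1-(1-q_*)^2\bar{R}'(1-q_*)}=:L,
\]
using $\kappa_*=\bigl(1-(1-q_*)^2\bar{R}'(1-q_*)\bigr)^{-1}-1$ from Appendix \ref{section:SEandRS}. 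Since $\mu_D$ is compactly supported, $\bar{R}'(1-q_*)=\beta^2R'(\beta(1-q_*))\to0$ as $\beta\to0$, so $L<1$ for all $\beta\in(0,\beta_0)$ after shrinking $\beta_0$. With $T$ an $L$-contraction, iterating $\delta_{st}=T(\delta_{s-1,t-1})$ the required $\min(s,t)-1$ times down to the base term $\delta_{1,|t-s|+1}\in[-\delta_*,\delta_*]$ (which is $\delta_*$ if $s=t$ and $\Phi(0)$ otherwise, as $\mathsf{Y}_0$ is independent of $\mathsf{Y}_1,\mathsf{Y}_2,\ldots$) gives $|\delta_{st}-\delta_*|\le2\delta_*\,L^{\min(s,t)-1}\to0$; multiplying through by $\kappa_*$ yields the statement for $\kappa_*\delta_{st}$.

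The crucial use of the high-temperature assumption here is the contraction bound $L<1$: one must verify that the Lipschitz constant of the scalar state-evolution map, which via Price's theorem reduces to $\kappa_*$ times a second moment of $\partial_y g$, is strictly below $1$ for small $\beta$. This hinges on $\kappa_*\to0$ as $\beta\to0$, which follows from analyticity of the R-transform of the compactly supported law $\mu_D$ at the origin (together with $1-q_*$ staying bounded away from $0$ and $1$). The remaining points --- differentiation under the integral sign, the moment conditions needed for Price's theorem, and the degenerate cases $\rho=\pm1$ --- are routine given \Cref{as} and the pointwise bound on $\partial_y g$.
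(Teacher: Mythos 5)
The paper proves nothing here: it simply cites \cite[Proposition~2.3]{fan2021replica}, so there is no in-paper proof to compare your argument against. On its own terms, your blind proof is correct, and the two-step structure you use --- induction on the diagonal to show the Gaussian effective noise $\mathsf{Y}_{t-1}$ is always $\mathcal{N}(0,\sigma_*^2)$ independent of $\mathsf{H}$, then a contraction argument for $\delta_{st}=T(\delta_{s-1,t-1})$ on $[-\delta_*,\delta_*]$ using Gaussian covariance differentiation (Price's theorem) plus Cauchy--Schwarz to get the Lipschitz bound $L=\kappa_*/(1-q_*)^2$ --- is exactly the standard mechanism for proving convergence of scalar state-evolution recursions, and is what the cited reference does. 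The formula $\kappa_*/(1-q_*)^2 = \bar{R}'(1-q_*)/(1-(1-q_*)^2\bar{R}'(1-q_*))$ and the observation that this vanishes as $\beta\to 0$ (because $\bar{R}'(1-q_*)=\beta^2 R'(\beta(1-q_*))$, $R$ analytic at $0$, and $1-q_*$ bounded away from $0$) correctly close the argument; the base case $\delta_{1,u}=\Phi(0)$ for $u\ge 2$ (using independence of $\mathsf{Y}_0$ from $\mathsf{Y}_1,\mathsf{Y}_2,\ldots$) and the fixed-point identity $T(\delta_*)=\Phi(1)=\delta_*$ are handled properly. I found no gap.

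One small remark worth keeping in mind if you polish this: the pointwise bound $|\partial_y g|\le (1-q_*)^{-1}$ is not tight, and with it one only gets $L\le \kappa_*/(1-q_*)^2$. This is fine for the qualitative statement (it suffices that $L<1$ for small $\beta$), but it is the worst step of the chain; if one wanted an explicit $\beta_0$ one would replace it by the sharper second-moment bound $\mathbb{E}[(\partial_y g(\mathsf{H},\sigma_*\mathsf{G}))^2]$, which is already in the form you wrote before applying the pointwise bound.
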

\noindent 
Thus, the algorithm \eqref{AMP} is convergent for sufficiently small $\beta,$ in the sense
\begin{equation}\label{ytysSE}
\begin{aligned}
&\lim _{\min (s, t) \rightarrow \infty}\left(\lim _{n \rightarrow \infty} \frac{1}{n}\left\|x^{t}-x^{s}\right\|^{2}_2\right)=\lim _{\min (s, t) \rightarrow \infty}\left(\delta_{s s}+\delta_{t t}-2 \delta_{s t}\right)=0, \\
&\lim _{\min (s, t) \rightarrow \infty}\left(\lim _{n \rightarrow \infty} \frac{1}{n}\left\|y^{t}-y^{s}\right\|^{2}_2\right)=\lim _{\min (s, t) \rightarrow \infty} \kappa_{*}\left(\delta_{s s}+\delta_{t t}-2 \delta_{s t}\right)=0.
\end{aligned}
\end{equation} 

\begin{Corollary}
	For any $t\ge 1$,
	\begin{equation}\label{mtlength}
		\lim _{n \rightarrow \infty} \frac{1}{n}\left\|m^{t}\right\|_{2}^{2} = q_{*}
	\end{equation}
	almost surely.
\end{Corollary}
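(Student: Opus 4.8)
The plan is to combine the representation $m^{t}=\tanh(h+y^{t-1})$ from \eqref{mtdef} with the AMP state evolution. First I would write
\[
\frac{1}{n}\left\|m^{t}\right\|_{2}^{2}=\frac{1}{n}\sum_{i=1}^{n}\tanh\!\left(h_{i}+y_{i}^{t-1}\right)^{2}.
\]
By \Cref{AMPSE}, almost surely as $n\to\infty$ the empirical distribution of the pairs $(h_{i},y_{i}^{t-1})_{i=1}^{n}$ converges weakly to the joint law of $(\mathsf{H},\mathsf{Y}_{t-1})$. Since the test function $(x,y)\mapsto \tanh(x+y)^{2}$ is bounded and continuous, this weak convergence yields
\[
\lim_{n\to\infty}\frac{1}{n}\sum_{i=1}^{n}\tanh\!\left(h_{i}+y_{i}^{t-1}\right)^{2}=\E\!\left[\tanh\!\left(\mathsf{H}+\mathsf{Y}_{t-1}\right)^{2}\right]
\]
almost surely.

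Next I would identify the law of $\mathsf{Y}_{t-1}$. When $t=1$, the description of the limit law in \Cref{AMPSE} together with the initialization \eqref{AMPInit} gives $\mathsf{Y}_{0}\sim\mathcal{N}(0,\sigma_{*}^{2})$ independent of $\mathsf{H}$. When $t\ge 2$, \Cref{AMPSE} gives $\mathsf{Y}_{t-1}\sim\mathcal{N}(0,\kappa_{*}\delta_{t-1,t-1})$, independent of $\mathsf{H}$, and \Cref{Prop23} gives $\kappa_{*}\delta_{t-1,t-1}=\sigma_{*}^{2}$. Hence in every case $\mathsf{Y}_{t-1}\overset{d}{=}\sigma_{*}\mathsf{G}$ for $\mathsf{G}\sim\mathcal{N}(0,1)$ independent of $\mathsf{H}$. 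Substituting this and applying the fixed-point equation \eqref{fixedpointbar}, namely $q_{*}=\E[\tanh(\mathsf{H}+\sigma_{*}\mathsf{G})^{2}]$, we obtain $\lim_{n\to\infty}n^{-1}\|m^{t}\|_{2}^{2}=q_{*}$ almost surely.

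There is essentially no serious obstacle here: the entire content is already packaged in \Cref{AMPSE}, \Cref{Prop23}, and the definition of $q_{*}$. The only point requiring a small amount of care is justifying that weak convergence of the empirical distributions, combined with the boundedness and continuity of $\tanh^{2}$, is enough to pass to the limit of the Cesàro averages---this is immediate, but it is worth stating explicitly since $h$ itself is unbounded in general (only the composition $\tanh(h_i+y_i^{t-1})^2$ need be controlled, and that is bounded by $1$).
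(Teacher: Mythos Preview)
Your proof is correct and, in fact, slightly more direct than the paper's. The paper uses the alternative representation $m^{t}=(1-q_{*})(x^{t}+y^{t-1})$ from \eqref{mtdef}, expands $\frac{1}{n}\|m^{t}\|_{2}^{2}$ as $(1-q_{*})^{2}\big(\frac{1}{n}\|x^{t}\|_{2}^{2}+\frac{1}{n}\|y^{t-1}\|_{2}^{2}+\frac{2}{n}x^{t}\cdot y^{t-1}\big)$, and then applies the limiting covariances in \eqref{Deltadef} together with \Cref{Prop23} and the algebraic identity $(1-q_{*})^{2}(\delta_{*}+\sigma_{*}^{2})=q_{*}$ from \eqref{deltastar}. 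Your route instead uses $m^{t}=\tanh(h+y^{t-1})$, applies weak convergence against the bounded continuous test function $\tanh^{2}$, and reads off the answer directly from the fixed-point equation \eqref{fixedpointbar}. Your argument avoids the cross-term and the auxiliary identity for $\delta_{*}$, at the cost of invoking the full empirical-law convergence rather than only the second-moment convergences; both are already contained in \Cref{AMPSE}, so this is a wash. The case split $t=1$ versus $t\ge 2$ you give for the law of $\mathsf{Y}_{t-1}$ is handled correctly.
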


\begin{proof}
	Recall that $m^t=(1-q_*)(x^t+y^{t-1})$. Then, for any fixed $t\ge 1$, almost surely
\[\lim_{n \rightarrow \infty} \frac{1}{n}\left\|m^{t}\right\|_{2}^{2}=
(1-q_*)^2 \lim_{n \rightarrow \infty} \frac{1}{n}
\qty(\norm{x^t}_2^2+\norm{y^{t-1}}_2^2+2 x^t \cdot y^{t-1}) = (1-q_*)^2
\qty(\delta_*+\sigma_*^2)=q_*\]
	where we used \Cref{Prop23} for the penultimate equality and \eqref{deltastar} for the last equality. 
\end{proof}

\section{Equivalence of $\Haar(\O(n))$ and $\Haar(\SO(n))$}\label{appendix:SOn}

In \cite{fan2021replica}, the sigma-field $\cG_t$ was defined assuming
$O \sim \Haar(\O(n))$, and \Cref{condmoments}
was established for this definition. In this appendix, we explain why the
arguments of \cite{fan2021replica} hold also for $O \sim \Haar(\SO(n))$.

The following lemma first shows that the action of $O\sim \Haar{\O(n)}$ and of
$O\sim\Haar{\SO(n)}$ on $k<n$ vectors (simultaneously) are equivalent
in law. This lemma directly implies
that \cite[Propositions~2.7 and 2.8]{fan2021replica} hold verbatim
for $O \sim \SO(n)$, because their statements depend on $O$ only via the laws of
$Oa$ and $O(a,c)$ for deterministic vectors $a,c \in \R^n$.

\begin{Lemma}\label{lemma:QAOA}
Let $O \sim \Haar(\SO(n))$ and $Q \sim \Haar(\O(n))$. Let $A \in \R^{n \times k}$ be any matrix with $k<n$. Then,
\[OA \overset{L}{=} QA.\]
\end{Lemma}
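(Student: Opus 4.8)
The plan is to reduce the claim to a statement about $\Haar(\SO(n))$ alone, using the coset structure of $\O(n)$, and then to exploit that $k<n$ leaves at least one direction orthogonal to the columns of $A$.

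First I would fix any $R_0 \in \O(n)$ with $\det R_0 = -1$, say $R_0 = \diag(-1,1,\dots,1)$, and record the standard decomposition $\O(n) = \SO(n) \sqcup R_0\,\SO(n)$ into the two cosets of $\SO(n)$, which carry equal $\Haar(\O(n))$-mass and on which the normalized restriction of $\Haar(\O(n))$ is, respectively, $\Haar(\SO(n))$ and its pushforward under $g\mapsto R_0 g$. Consequently $Q \overset{L}{=} R_0^{B} O$ for $B\sim\mathrm{Bernoulli}(1/2)$ independent of $O\sim\Haar(\SO(n))$, hence $QA \overset{L}{=} R_0^{B}OA$. Since the $B=0$ term is exactly $OA$, it suffices to prove $R_0 O A \overset{L}{=} OA$.

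Next I would use the hypothesis $k<n$: since $\operatorname{rank}A \le k < n$, there is a unit vector $w$ orthogonal to every column of $A$, and the reflection $S := I - 2ww^\top$ then satisfies $S\in\O(n)$, $\det S = -1$, and $SA = A$ (so also $S^{-1}A = A$). This lets me rewrite $R_0 O A = (R_0 O S)(S^{-1}A) = (R_0 O S)A$, moving the offending sign into a factor that is invisible to $A$. It then remains to check that $R_0 O S \overset{L}{=} O$: the map $\phi(O) = R_0 O S$ is a homeomorphism of $\SO(n)$ onto itself (one has $\det(R_0 O S) = 1$, with inverse $O'\mapsto R_0^{-1}O'S^{-1}$), and for $g,h\in\SO(n)$ one computes $\phi(gOh) = (R_0 g R_0^{-1})\,\phi(O)\,(S^{-1}hS)$ with both conjugates back in $\SO(n)$; thus $\phi_*\Haar(\SO(n))$ is a left- and right-$\SO(n)$-invariant probability measure on $\SO(n)$ and so equals $\Haar(\SO(n))$ by uniqueness of Haar measure. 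Therefore $(R_0 O S)A \overset{L}{=} OA$, which gives $R_0 O A \overset{L}{=} OA$ and hence $QA \overset{L}{=} OA$.

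The step I expect to require the most care — and the only one that uses $k<n$ — is the construction of the sign-carrying reflection $S$ with $SA=A$; the hypothesis is genuinely needed, since for $k=n$ (e.g.\ $A=I$) the determinant obstructs $OA\overset{L}{=}QA$. Everything else is bookkeeping with the bi-invariance of Haar measure, where the only point to watch is tracking which of the matrix products lands in $\SO(n)$ and which in its non-identity coset.
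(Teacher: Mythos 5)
Your proof is correct and rests on the same underlying idea as the paper's: the determinant sign separating $\O(n)$ from $\SO(n)$ can be absorbed into an orthogonal map that fixes $A$, and such a map exists precisely because $k<n$ leaves a free direction. The paper implements this via a right-multiplied sign $P=\diag(1,\ldots,1,b)$ together with a QR reduction to the canonical frame $E$ (where $PE=E$ is immediate), whereas you place the sign on the left, build an explicit Householder reflection $S$ fixing $\mathrm{col}(A)$, and close with a Haar-uniqueness argument — a minor presentational variant of the same construction.
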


\begin{proof}
	We have $O\overset{L}{=}QP$ where $P=\diag(1,\ldots,1,b)$ with
	$b \in \{+1,-1\}$ having equal probability, and $Q$ is independent of $P$. Set
	\[E=\begin{pmatrix} I_{k \times k} \\ 0_{(n-k) \times k} \end{pmatrix}.\]
	Then $PE=E$, so $OE\overset{L}=QPE=QE$, and the desired result holds for $A=E$.
	
	Now for general $A \in \R^{n \times k}$, write
	$A=VR$ where $V \in \R^{n \times k}$ has orthonormal columns. There exists some
	(deterministic) $P \in \SO(n)$ such that $V=PE$. Then
	\[OV=OPE\overset{L}{=}OE\overset{L}{=}QE\overset{L}=QPE=QV\]
	where the first and third equalities in law use invariance of Haar measure on
	$\O(n)$ and $\SO(n)$ respectively. Multiplying by $R$, this yields
	$OA \overset{L}{=} QA$.
\end{proof}

The law of $O \sim \Haar(\O(n))$ conditioned on an event $\{OA=B\}$ is
described in \cite[Proposition 2.6]{fan2021replica}. The following is an
analogous statement for $O \sim \Haar(\SO(n))$. The statement is similar
to \cite[Proposition 2.6]{fan2021replica},
except that the signs of the columns of $V_{A^\bot}, V_{B^\bot}$ should be
chosen based on $A,B$, rather than being arbitrary.

\begin{Lemma}\label{conditionO}
	Let $k<n$, and let $A,B \in \R^{n \times k}$ be matrices of full column rank $k$
	such that there exists $Q \in \SO(n)$ with $QA=B$. Suppose $O \sim \Haar(\SO(n))$.
	Then there exist $V_{A^\perp} \in \R^{n \times (n-k)}$ and
	$V_{B^\perp} \in \R^{n \times (n-k)}$ whose columns are orthonormal bases for
	the orthogonal complements of the column spans of $A$ and $B$, such that
	\begin{equation}\label{socond}
		O|_{A=OB}\overset{L}{=} A(A^\top A)^{-1} B^\top
		+V_{A^\perp} \tilde{O}V_{B^\perp}^\top=A(B^\top B)^{-1} B^\top
		+V_{A^\perp} \tilde{O}V_{B^\perp}^\top
	\end{equation}
	where $\tilde{O} \sim \Haar(\SO(n-k))$.
\end{Lemma}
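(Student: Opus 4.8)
The plan is to follow the proof of \cite[Proposition~2.6]{fan2021replica} for $\Haar(\O(n))$, but to track orientations so that the residual randomness lands in $\SO(n-k)$ rather than $\O(n-k)$. First I would reduce the constraint $\{A=OB\}$ to a constraint of the form $\{OV_B=V_A\}$ with $V_A,V_B\in\R^{n\times k}$ having orthonormal columns. Writing $B=V_BR$ via the polar decomposition with $R=(B^\top B)^{1/2}$, the hypothesis that $QA=B$ for some $Q\in\SO(n)$ forces $A^\top A=B^\top B=R^2$, so $V_A:=AR^{-1}$ also has orthonormal columns and $A=V_AR$; hence $A=OB$ is equivalent to $V_A=OV_B$, and one checks directly that $A(A^\top A)^{-1}B^\top=A(B^\top B)^{-1}B^\top=V_AV_B^\top$.

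Next I would parametrize the solution set $\{O\in\SO(n):OV_B=V_A\}$. Fix orthonormal completions $V_{A^\perp},V_{B^\perp}\in\R^{n\times(n-k)}$ of $V_A,V_B$; since any $O$ with $OV_B=V_A$ must map $\operatorname{span}(V_B)^\perp$ isometrically onto $\operatorname{span}(V_A)^\perp$, every $O\in\O(n)$ with $OV_B=V_A$ has the unique form $O=V_AV_B^\top+V_{A^\perp}\tilde O V_{B^\perp}^\top$ with $\tilde O\in\O(n-k)$. Taking determinants in $O\,[V_B\mid V_{B^\perp}]=[V_A\mid V_{A^\perp}]\,\diag(I_k,\tilde O)$ gives $\det O=\det[V_A\mid V_{A^\perp}]\cdot\det[V_B\mid V_{B^\perp}]\cdot\det\tilde O$. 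Therefore, after replacing one column of $V_{A^\perp}$ (resp.\ $V_{B^\perp}$) by its negative if necessary so that $\det[V_A\mid V_{A^\perp}]=\det[V_B\mid V_{B^\perp}]=1$ — this sign choice depends on $A,B$, which is exactly the discrepancy with the $\O(n)$ statement — the condition $O\in\SO(n)$ becomes precisely $\tilde O\in\SO(n-k)$.

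It then remains to identify the conditional law. The map $\pi:\SO(n)\to V_k(\R^n)$, $O\mapsto OV_B$, onto the Stiefel manifold of orthonormal $k$-frames (which is surjective since $k<n$) is a smooth $\SO(n)$-equivariant fiber bundle whose fiber over $V_A$ is the coset $\{O\in\SO(n):OV_B=V_A\}$. Fixing the particular solution $O_0=V_AV_B^\top+V_{A^\perp}V_{B^\perp}^\top$, which lies in $\SO(n)$ by the orientation choice above, a general solution is $O_0G$ with $G$ in the stabilizer $\{G\in\SO(n):GV_B=V_B\}$, and this stabilizer is identified with $\SO(n-k)$ carrying its Haar measure. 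By left-invariance of $\Haar(\SO(n))$, the conditional law of $O$ given $\pi(O)=V_A$ is $O_0\cdot\Haar(\text{stabilizer})$ — the same disintegration argument already used for $\O(n)$ in \cite{fan2021replica}, applied verbatim here. Pushing $\Haar(\SO(n-k))$ through $\tilde O\mapsto V_AV_B^\top+V_{A^\perp}\tilde O V_{B^\perp}^\top$ and recalling $V_AV_B^\top=A(A^\top A)^{-1}B^\top=A(B^\top B)^{-1}B^\top$ gives \eqref{socond}.

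The main obstacle is the orientation bookkeeping in the second step: unlike in the $\O(n)$ case, the signs of the columns of $V_{A^\perp}$ and $V_{B^\perp}$ are not free, since they enter the determinant identity that separates $\SO(n-k)$ from its coset in $\O(n-k)$, and matching them correctly is what forces the $A,B$-dependent sign convention in the statement. The disintegration step and the polar-decomposition reduction are routine and unchanged from the $\O(n)$ argument.
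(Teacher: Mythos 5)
Your proof is correct, but it takes a genuinely different route from the paper's. The paper derives the $\SO(n)$ version \emph{from} the already-established $\O(n)$ version of \cite[Proposition~2.6]{fan2021replica}: fixing a deterministic orientation convention (QR with positive-diagonal $R_A$, completion $V=(V_A,V_{A^\perp})$ with $\det V=1$), it tests the $\O(n)$ equality-in-law against functions $f$ supported on $\{\det Q=1\}$, computes $\det(A(A^\top A)^{-1}B(Q)^\top+V_{A^\perp}\tilde Q V_{B(Q)^\perp}^\top)=\det\tilde Q$ by a block-determinant calculation, and observes that both sides then halve to the $\SO$-conditioned expectations. You instead re-run the disintegration argument directly on $\SO(n)$: reduce to the Stiefel manifold $V_k(\R^n)$ via polar decomposition (using $A^\top A=B^\top B$), parametrize the fiber $\{O:OV_B=V_A\}$ as $O_0\cdot(\text{stabilizer})$ with $O_0=V_AV_B^\top+V_{A^\perp}V_{B^\perp}^\top$, identify the stabilizer of $V_B$ with $\SO(n-k)$, and invoke left-invariance of Haar. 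The determinant bookkeeping you do in your second step plays the same structural role as the paper's block-determinant computation—it is where the $A,B$-dependent sign convention enters—but your argument is direct rather than a reduction to the $\O(n)$ case. The paper's approach has the advantage of genuinely reusing the cited proposition as a black box (so nothing about the disintegration needs to be re-justified), whereas yours is cleaner group-theoretically and makes transparent that the $\SO(n)$ statement is exactly as easy as the $\O(n)$ one modulo orientation. The only place where you lean on \cite{fan2021replica} rather than spelling things out is the statement that Haar measure conditioned on a principal-bundle fiber is Haar on the coset; since you explicitly flag this as "the same disintegration argument ... applied verbatim," that is a fair citation rather than a gap.
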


\begin{proof}
	For any matrix $A \in \R^{n \times k}$ of full column rank, denote by
	\[A=V_AR_A, \qquad V_A \in \R^{n \times k},\quad R_A \in \R^{k \times k}\]
	its unique reduced QR-factorization where $V_A$ has orthonormal columns and $R_A$
	is upper-triangular with positive diagonal entries. Then define $V_{A^\perp}$
	(as a measurable function of $V_A$) so that $V=(V_A,V_{A^\perp})$ is any
	completion of the orthonormal basis to $\R^n$ that satisfies $\det V=1$. We define similar quantities for $B$.
	
	Let $Q \sim \Haar(\O(n))$. We apply the conditional law from \cite[Proposition~2.6]{fan2021replica}
	\[Q|_{A=QB}\overset{L}{=} A(A^\top A)^{-1} B+V_{A^\perp}
	\tilde{Q}V_{B^\perp}^\top\]
	where $\tilde{Q} \sim \Haar(\O(n-k))$. This means the following: Fix $A$ and
	consider $B(Q)=Q^\top A$ as a function of $Q$. Then for any bounded measurable
	functions $f:\O(n) \to \R$ and $g:\R^{n \times k} \to \R$,
	\begin{equation}\label{eq:equalityinlaw}
		\E[f(Q)g(B(Q))]=\E\Big[f\Big(A(A^\top A)^{-1}B(Q)+V_{A^\perp} \tilde{Q}
		V_{B(Q)^\perp}^\top\Big)g(B(Q))\Big]
	\end{equation}
	where $Q \sim \Haar(\O(n))$ and $\tilde{Q} \sim \Haar(\O(n-k))$ are independent.
	
	We apply this to a function $f$ that is identically 0 over $\{Q \in
\O(n):\det Q=-1\}$. Then the left side of \eqref{eq:equalityinlaw} is
	\[\E[f(Q)g(B(Q))\mathbb{I}\{\det Q=1\}]
	=\frac{1}{2}\E_{O \sim \SO(n)}[f(O)g(B(O))].\]
	For the right side of \eqref{eq:equalityinlaw}, denote $V=(V_A,V_{A^\perp})$ and
	$W(Q)=(V_{B(Q)},V_{B(Q)^\perp})$ as defined above. Then, since $\det V=1$ and
	$\det W(Q)=1$ by construction,
	\begin{align*}
		&\det\Big(A(A^\top A)^{-1}B(Q)+V_{A^\perp} \tilde{Q}
		V_{B(Q)^\perp}^\top\Big)=\det\Big(V^\top \Big[A(A^\top A)^{-1}B(Q)+V_{A^\perp} \tilde{Q}
		V_{B(Q)^\perp}^\top\Big]W(Q)\Big)\\
		&=\det\begin{pmatrix} V_A^\top A(A^\top A)^{-1} B(Q)^\top V_{B(Q)} & 0 \\ 
			0 & \tilde{Q} \end{pmatrix}=\det\Big(V_A^\top A(A^\top A)^{-1} B(Q)^\top V_{B(Q)}\Big) \cdot \det
		\tilde{Q}\\
		&=\det R_A \cdot \det (A^\top A)^{-1} \cdot \det R_{B(Q)}^\top \cdot \det
		\tilde{Q}.
	\end{align*}
	Note that $\det R_A>0$, $\det (A^\top A)^{-1}>0$, and $\det R_{B(Q)}^\top>0$
	(because $R(A)$ and $R_{B(Q)}$ were specified to have positive diagonal
	entries). Then, since $A(A^\top A)^{-1}B(Q)+V_{A^\perp} \tilde{Q}
	V_{B(Q)^\perp}^\top$ and $\tilde{Q}$ are both orthogonal,
	\[\det\Big(A(A^\top A)^{-1}B(Q)+V_{A^\perp} \tilde{Q}
	V_{B(Q)^\perp}^\top\Big)=\det \tilde{Q} \in \{+1,-1\}.\]
	Applying also $B(Q)\overset{L}{=} B(O)$ where $O \sim \Haar(\SO(n))$ by Lemma
	\ref{lemma:QAOA}, the right side of (\ref{eq:equalityinlaw}) is
	\begin{align*}
		&\E\Big[f\Big(A(A^\top A)^{-1}B(Q)+V_{A^\perp} \tilde{Q}
		V_{B(Q)^\perp}^\top\Big)g(B(Q))\mathbb{I}\{\det \tilde{Q}=1\}\Big]\\
		&=\frac{1}{2}\E_{O \sim \Haar(\SO(n)),\,\tilde{O} \sim
\Haar(\SO(n-k))}
		\Big[f\Big(A(A^\top A)^{-1}B(O)+V_{A^\perp} \tilde{O}
		V_{B(O)^\perp}^\top\Big)g(B(O))\Big].
	\end{align*}
	Thus we have shown, for any bounded measurable functions
	$f:\SO(n) \to \R$ and $g:\R^{n \times k} \to \R$,
	\[\E[f(O)g(B(O))]=\E\Big[f\Big(A(A^\top A)^{-1}B(O)+V_{A^\perp} \tilde{O}
	V_{B(O)^\perp}^\top\Big)g(B(O))\Big],\]
	which is equivalent to the desired equality in conditional law.
\end{proof}

The arguments of \cite[Lemmas 3.2 and 4.2]{fan2021replica} may now be applied
verbatim, using the versions of \cite[Propositions 2.7 and 2.8]{fan2021replica}
for $O \sim \Haar(\SO(n))$ as
guaranteed by \Cref{lemma:QAOA}, and using \Cref{conditionO} in place of
\cite[Proposition~2.6]{fan2021replica}. This shows
\Cref{condmoments} as stated under $O \sim \Haar(\SO(n))$, as assumed in this
work.

\section{Auxiliary lemmas}

The following shows concentration of the norm of sub-Gaussian vectors. 

\begin{Lemma}\label{subGvershynin}
Let $X$ be a mean-zero random vector in $\R^n$, satisfying $\E[\exp(t^\top X)]
\leq \exp(K\|t\|_2^2/2)$ for some $K>0$ and all $t \in \R^n$. Then
for any $s>0$,
\begin{equation}\label{subGnorm}
    \PP\big[\norm{X}_2^2 \ge K(n+2\sqrt{sn}+2s)\big] \le \exp(-s).
\end{equation}
\end{Lemma}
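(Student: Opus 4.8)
The plan is to reduce the claim to the chi-squared tail bound of Laurent and Massart via a Gaussian comparison. The guiding observation is that the hypothesis $\E[\exp(t^\top X)] \le \exp(K\|t\|_2^2/2)$ is exactly the moment generating function bound satisfied by a Gaussian vector $Z \sim \mathcal{N}(0,KI_n)$, for which $\|Z\|_2^2/K \sim \chi_n^2$; the task is to upgrade this linear control into quadratic exponential control of $\|X\|_2^2$, and then run the same Chernoff optimization as in the Gaussian case.

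First I would establish the quadratic exponential moment bound
\[
\E\!\left[\exp\!\left(\tfrac{\lambda}{2}\|X\|_2^2\right)\right] \le (1-K\lambda)^{-n/2}
\qquad \text{for all } 0 \le \lambda < 1/K.
\]
This follows from the Gaussian integral identity $\exp(\tfrac{\lambda}{2}\|x\|_2^2) = \E_{g\sim\mathcal{N}(0,I_n)}[\exp(\sqrt{\lambda}\,g^\top x)]$, Tonelli's theorem to exchange the expectations over $X$ and $g$ (legitimate since all integrands are nonnegative, so no a priori integrability is needed), the hypothesis applied with $t=\sqrt{\lambda}\,g$, and the elementary computation $\E_g[\exp(\tfrac{K\lambda}{2}\|g\|_2^2)] = (1-K\lambda)^{-n/2}$. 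Next, by Markov's inequality applied to $\exp(\tfrac{\lambda}{2}\|X\|_2^2)$, for the threshold $u = K(n+2\sqrt{sn}+2s)$ and writing $\theta = K\lambda\in(0,1)$,
\[
\log\PP\!\left[\|X\|_2^2 \ge u\right] \le -\frac{\theta u}{2K} - \frac{n}{2}\log(1-\theta).
\]

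It then remains to choose $\theta$ so that the right-hand side is at most $-s$. Taking the optimizer $\theta^\ast = 1 - n/(n+2\sqrt{sn}+2s) \in (0,1)$, the bound becomes $-(\sqrt{sn}+s) + \tfrac{n}{2}\log\!\big(n/(n+2\sqrt{sn}+2s)\big)$, and after substituting $y = \sqrt{s/n}$ the desired inequality reduces to $e^{2y} \ge 1 + 2y + 2y^2$ for $y \ge 0$, which holds by comparing Taylor series. There is no deep obstacle here; the only points requiring care are the justification of the exchange of expectations in the first step (handled by nonnegativity) and tracking the Chernoff optimization carefully enough that it yields exactly the Laurent--Massart threshold $n+2\sqrt{sn}+2s$ rather than a weaker bound with degraded constants.
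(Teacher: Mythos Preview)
Your proof is correct and self-contained, whereas the paper simply cites \cite{hsu2012tail} (specialized to $\Sigma=A=I$, $\mu=0$) without further argument. What you have written is essentially a clean derivation of that special case: the Gaussian linearization $\exp(\tfrac{\lambda}{2}\|x\|_2^2)=\E_g[\exp(\sqrt{\lambda}\,g^\top x)]$ combined with Tonelli and the hypothesis yields the chi-squared moment generating function bound, and the Chernoff step with $\theta^\ast=1-n/(n+2\sqrt{sn}+2s)$ reproduces exactly the Laurent--Massart threshold. The advantage of your approach is that it is entirely elementary and does not rely on an external reference; the paper's approach is shorter on the page but opaque. One small typo: in the line ``the bound becomes $-(\sqrt{sn}+s)+\tfrac{n}{2}\log\bigl(n/(n+2\sqrt{sn}+2s)\bigr)$'', the sign on the logarithmic term is flipped---it should be $-\tfrac{n}{2}\log\bigl(n/(n+2\sqrt{sn}+2s)\bigr)=+\tfrac{n}{2}\log\bigl((n+2\sqrt{sn}+2s)/n\bigr)$, which is positive. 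Your subsequent reduction to $e^{2y}\ge 1+2y+2y^2$ is exactly what this corrected expression requires, so the slip is purely typographical and the argument is sound.
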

\begin{proof}
This follows from \cite[Theorem 1]{hsu2012tail} applied with $\Sigma=A=I$ and
$\mu=0$.
\end{proof}

\printbibliography

@article{mezard1985microstructure,
  title={The microstructure of ultrametricity},
  author={M{\'e}zard, Marc and Virasoro, Miguel Angel},
  journal={Journal de Physique},
  volume={46},
  number={8},
  pages={1293--1307},
  year={1985},
  publisher={Soci{\'e}t{\'e} fran{\c{c}}aise de physique}
}

@article{mezard1984replica,
  title={Replica symmetry breaking and the nature of the spin glass phase},
  author={M{\'e}zard, Marc and Parisi, Giorgio and Sourlas, Nicolas and Toulouse, G{\'e}rard and Virasoro, Miguel},
  journal={Journal de Physique},
  volume={45},
  number={5},
  pages={843--854},
  year={1984},
  publisher={Soci{\'e}t{\'e} fran{\c{c}}aise de physique}
}

@article{mezard1984nature,
  title={Nature of the spin-glass phase},
  author={M{\'e}zard, Marc and Parisi, Giorgio and Sourlas, Nicolas and Toulouse, G and Virasoro, Miguel},
  journal={Physical review letters},
  volume={52},
  number={13},
  pages={1156},
  year={1984},
  publisher={APS}
}

@inproceedings{gerbelot2020asymptotic,
  title={Asymptotic errors for high-dimensional convex penalized linear regression beyond gaussian matrices},
  author={Gerbelot, C{\'e}dric and Abbara, Alia and Krzakala, Florent},
  booktitle={Conference on Learning Theory},
  pages={1682--1713},
  year={2020},
  organization={PMLR}
}

@inproceedings{barbier2018mutual,
  title={The mutual information in random linear estimation beyond iid matrices},
  author={Barbier, Jean and Macris, Nicolas and Maillard, Antoine and Krzakala, Florent},
  booktitle={2018 IEEE International Symposium on Information Theory (ISIT)},
  pages={1390--1394},
  year={2018},
  organization={IEEE}
}

@inproceedings{takeuchi2017rigorous,
  title={Rigorous dynamics of expectation-propagation-based signal recovery from unitarily invariant measurements},
  author={Takeuchi, Keigo},
  booktitle={2017 IEEE International Symposium on Information Theory (ISIT)},
  pages={501--505},
  year={2017},
  organization={IEEE}
}

@article{takeda2006analysis,
  title={Analysis of CDMA systems that are characterized by eigenvalue spectrum},
  author={Takeda, Koujin and Uda, Shinsuke and Kabashima, Yoshiyuki},
  journal={EPL (Europhysics Letters)},
  volume={76},
  number={6},
  pages={1193},
  year={2006},
  publisher={IOP Publishing}
}

@book{mezard1987spin,
  title={Spin glass theory and beyond: An Introduction to the Replica Method and Its Applications},
  author={M{\'e}zard, Marc and Parisi, Giorgio and Virasoro, Miguel Angel},
  volume={9},
  year={1987},
  publisher={World Scientific Publishing Company}
}

@article{jagannath2017approximate,
  title={Approximate ultrametricity for random measures and applications to spin glasses},
  author={Jagannath, Aukosh},
  journal={Communications on Pure and Applied Mathematics},
  volume={70},
  number={4},
  pages={611--664},
  year={2017},
  publisher={Wiley Online Library}
}

@article{auffinger2019thouless,
  title={Thouless--Anderson--Palmer equations for generic $ p $-spin glasses},
  author={Auffinger, Antonio and Jagannath, Aukosh},
  journal={The Annals of Probability},
  volume={47},
  number={4},
  pages={2230--2256},
  year={2019},
  publisher={Institute of Mathematical Statistics}
}

@article{chatterjee2010spin,
  title={Spin glasses and Stein's method},
  author={Chatterjee, Sourav},
  journal={Probability theory and related fields},
  volume={148},
  number={3-4},
  pages={567--600},
  year={2010},
  publisher={Springer}
}

@book{talagrand2010mean,
  title={Mean field models for spin glasses: Volume I: Basic examples},
  author={Talagrand, Michel},
  volume={54},
  year={2010},
  publisher={Springer Science \& Business Media}
}

@inproceedings{ding2019capacity,
  title={Capacity lower bound for the Ising perceptron},
  author={Ding, Jian and Sun, Nike},
  booktitle={Proceedings of the 51st Annual ACM SIGACT Symposium on Theory of Computing},
  pages={816--827},
  year={2019}
}

@inproceedings{bolthausen2018morita,
  title={A Morita type proof of the replica-symmetric formula for SK},
  author={Bolthausen, Erwin},
  booktitle={International Conference on Statistical Mechanics of Classical and Disordered Systems},
  pages={63--93},
  year={2018},
  organization={Springer}
}

@article{bolthausen2014iterative,
  title={An iterative construction of solutions of the TAP equations for the Sherrington--Kirkpatrick model},
  author={Bolthausen, Erwin},
  journal={Communications in Mathematical Physics},
  volume={325},
  number={1},
  pages={333--366},
  year={2014},
  publisher={Springer}
}

@article{fan2020approximate,
  title={Approximate message passing algorithms for rotationally invariant matrices},
  author={Fan, Zhou},
  journal={arXiv preprint arXiv:2008.11892},
  year={2020}
}

@article{guionnet2005fourier,
  title={A Fourier view on the R-transform and related asymptotics of spherical integrals},
  author={Guionnet, Alice and Ma{\i}, M},
  journal={Journal of functional analysis},
  volume={222},
  number={2},
  pages={435--490},
  year={2005},
  publisher={Elsevier}
}

@book{panchenko2013sherrington,
  title={The Sherrington-Kirkpatrick model},
  author={Panchenko, Dmitry},
  year={2013},
  publisher={Springer Science \& Business Media}
}

@article{adhikari2021dynamical,
  title={Dynamical Approach to the TAP Equations for the Sherrington--Kirkpatrick Model},
  author={Adhikari, Arka and Brennecke, Christian and von Soosten, Per and Yau, Horng-Tzer},
  journal={Journal of Statistical Physics},
  volume={183},
  number={3},
  pages={1--27},
  year={2021},
  publisher={Springer}
}

@article{thouless1977solution,
  title={Solution of'solvable model of a spin glass'},
  author={Thouless, David J and Anderson, Philip W and Palmer, Robert G},
  journal={Philosophical Magazine},
  volume={35},
  number={3},
  pages={593--601},
  year={1977},
  publisher={Taylor \& Francis}
}

@article{cherrier2003role,
  title={Role of the interaction matrix in mean-field spin glass models},
  author={Cherrier, Rapha{\"e}l and Dean, David S and Lef{\`e}vre, Alexandre},
  journal={Physical Review E},
  volume={67},
  number={4},
  pages={046112},
  year={2003},
  publisher={APS}
}

@article{hopfield1982neural,
  title={Neural networks and physical systems with emergent collective computational abilities},
  author={Hopfield, John J},
  journal={Proceedings of the national academy of sciences},
  volume={79},
  number={8},
  pages={2554--2558},
  year={1982},
  publisher={National Acad Sciences}
}

@article{marinari1994replica,
  title={Replica field theory for deterministic models. II. A non-random spin glass with glassy behaviour},
  author={Marinari, Enzo and Parisi, Giorgio and Ritort, Felix},
  journal={Journal of Physics A: Mathematical and General},
  volume={27},
  number={23},
  pages={7647},
  year={1994},
  publisher={IOP Publishing}
}

@article{belius2019tap,
  title={The TAP--Plefka Variational Principle for the Spherical SK Model},
  author={Belius, David and Kistler, Nicola},
  journal={Communications in Mathematical Physics},
  volume={367},
  number={3},
  pages={991--1017},
  year={2019},
  publisher={Springer}
}

@article{chen2018tap,
  title={On the TAP free energy in the mixed $p$-spin models},
  author={Chen, Wei-Kuo and Panchenko, Dmitry},
  journal={Communications in Mathematical Physics},
  volume={362},
  number={1},
  pages={219--252},
  year={2018},
  publisher={Springer}
}

@article{bhattacharya2016high,
  title={High Temperature Asymptotics of Orthogonal Mean-Field Spin Glasses},
  author={Bhattacharya, Bhaswar B and Sen, Subhabrata},
  journal={Journal of Statistical Physics},
  volume={162},
  number={1},
  pages={63--80},
  year={2016},
  publisher={Springer}
}

@article{fan2021replica,
  title={The replica-symmetric free energy for Ising spin glasses with orthogonally invariant couplings},
  author={Fan, Zhou and Wu, Yihong},
  journal={arXiv preprint arXiv:2105.02797},
  year={2021}
}

@article{parisi1995mean,
  title={Mean-field equations for spin models with orthogonal interaction matrices},
  author={Parisi, Giorgio and Potters, Marc},
  journal={Journal of Physics A: Mathematical and General},
  volume={28},
  number={18},
  pages={5267},
  year={1995},
  publisher={IOP Publishing}
}

@article{opper2001adaptive,
  title={Adaptive and self-averaging Thouless-Anderson-Palmer mean-field theory for probabilistic modeling},
  author={Opper, Manfred and Winther, Ole},
  journal={Physical Review E},
  volume={64},
  number={5},
  pages={056131},
  year={2001},
  publisher={APS}
}

@article{bauerschmidt2019very,
  title={A very simple proof of the LSI for high temperature spin systems},
  author={Bauerschmidt, Roland and Bodineau, Thierry},
  journal={Journal of Functional Analysis},
  volume={276},
  number={8},
  pages={2582--2588},
  year={2019},
  publisher={Elsevier}
}

@incollection{ledoux1999concentration,
  title={Concentration of measure and logarithmic Sobolev inequalities},
  author={Ledoux, Michel},
  booktitle={Seminaire de probabilites XXXIII},
  pages={120--216},
  year={1999},
  publisher={Springer}
}

@article{ccakmak2019memory,
  title={Memory-free dynamics for the Thouless-Anderson-Palmer equations of Ising models with arbitrary rotation-invariant ensembles of random coupling matrices},
  author={{\c{C}}akmak, Burak and Opper, Manfred},
  journal={Physical Review E},
  volume={99},
  number={6},
  pages={062140},
  year={2019},
  publisher={APS}
}

@article{subag2018free,
  title={Free energy landscapes in spherical spin glasses},
  author={Subag, Eliran},
  journal={arXiv preprint arXiv:1804.10576},
  year={2018}
}

@article{chen2018generalized,
  title={The generalized TAP free energy},
  author={Chen, Wei-Kuo and Panchenko, Dmitry and Subag, Eliran},
  journal={arXiv preprint arXiv:1812.05066},
  year={2018}
}

@article{chen2021generalized,
  title={The generalized TAP free energy II},
  author={Chen, Wei-Kuo and Panchenko, Dmitry and Subag, Eliran},
  journal={Communications in Mathematical Physics},
  volume={381},
  number={1},
  pages={257--291},
  year={2021},
  publisher={Springer}
}

@article{rangan2019vector,
  title={Vector approximate message passing},
  author={Rangan, Sundeep and Schniter, Philip and Fletcher, Alyson K},
  journal={IEEE Transactions on Information Theory},
  volume={65},
  number={10},
  pages={6664--6684},
  year={2019},
  publisher={IEEE}
}

@book{Dembo1998large,
  title={Large deviations techniques and applications},
  author={Dembo, Amir and Zeitouni, Ofer},
  volume={38},
  year={1998},
  publisher={Springer}
}

@book{anderson2010introduction,
  title={An introduction to random matrices},
  author={Anderson, Greg W and Guionnet, Alice and Zeitouni, Ofer},
  number={118},
  year={2010},
  publisher={Cambridge university press}
}

@book{rockafellar2015convex,
  title={Convex analysis},
  author={Rockafellar, Ralph Tyrell},
  year={2015},
  publisher={Princeton university press}
}

@article{opper2016theory,
	title={A theory of solving TAP equations for Ising models with general invariant random matrices},
	author={Opper, Manfred and Cakmak, Burak and Winther, Ole},
	journal={Journal of Physics A: Mathematical and Theoretical},
	volume={49},
	number={11},
	pages={114002},
	year={2016},
	publisher={IOP Publishing}
}

@article{guerra2003broken,
  title={Broken replica symmetry bounds in the mean field spin glass model},
  author={Guerra, Francesco},
  journal={Communications in mathematical physics},
  volume={233},
  number={1},
  pages={1--12},
  year={2003},
  publisher={Springer}
}

@article{hsu2012tail,
  title={A tail inequality for quadratic forms of subgaussian random vectors},
  author={Hsu, Daniel and Kakade, Sham and Zhang, Tong},
  journal={Electronic Communications in Probability},
  volume={17},
  pages={1--6},
  year={2012},
  publisher={Institute of Mathematical Statistics and Bernoulli Society}
}

@article{ma2017orthogonal,
  title={Orthogonal {AMP}},
  author={Ma, Junjie and Ping, Li},
  journal={IEEE Access},
  volume={5},
  pages={2020--2033},
  year={2017},
  publisher={IEEE}
}
\end{document}